\newtheorem{theorem}{Theorem}[section]
\numberwithin{equation}{section}
\newtheorem{lemma}[theorem]{Lemma}
\newtheorem{prop}[theorem]{Proposition}
\newtheorem{thm}{Theorem}
\newtheorem{cor}[theorem]{Corollary}
\newtheorem{rem}[theorem]{Remark}
\newenvironment{proofof}[2]{\begin{proof}[Proof of #1 \ref{#2}.]}{\end{proof}}
\newcommand{\be}{\begin{equation}}
\newcommand{\ee}{\end{equation}}
\newcommand{\bes}{\begin{equation*}}
\newcommand{\ees}{\end{equation*}}
\newcommand{\ud}{\mathrm{d}}
\newcommand{\BS}[3]{S_{#3}(#1, #2)}
\newcommand{\Orb}[2]{\mathcal{O}_{#2}(#1)}
\newcommand{\G}{\mathscr{G}}
\newcommand{\h}[1]{\hat{#1}}
\newcommand{\hC}[1]{\h{\mathcal{C}}{([#1]) }}
\newcommand{\st}{\, : \, }
\begin{document}
\title[A Limit Theorem for Birkhoff sums over
Rotations]{A Limit Theorem for Birkhoff Sums  of  non-Integrable Functions   over
Rotations}

\author{Yakov G. Sinai}
\address{Mathematics Department \\ Princeton University \\ Princeton \\ New Jersey 08544 \\ USA \endgraf
 Landau Institute for Theoretical Physics \\  Moscow \\ Russia}
\email{sinai@math.princeton.edu}
\thanks{The first author thanks NSF Grant DMS $0600996$ for the financial support.}

\author{Corinna Ulcigrai}
\address{School of Mathematics \\ University of Bristol \\ Bristol BS8 1TW \\ United Kingdom}
\email{corinna.ulcigrai@bristol.ac.uk}
\thanks{The second author thanks the  Clay Mathematics Institute, since part of this work was completed while she was supported by a Liftoff Fellowship.}

\subjclass[2000]{Primary 37A30: Secondary  37E10, 60B10}

\dedicatory{Dedicated to M. Brin  on the occasion of his sixtieth birthday.}

\keywords{Limit theorems, Rotations, Birkhoff Sums, Principal Value, Continued Fraction}

\begin{abstract}
We consider Birkhoff sums of functions with a singularity of type $1/x$ over rotations and prove the following limit theorem. Let $S_N= S_N( \alpha,x)$ be the $N^{th}$ non-renormalized Birkhoff sum, where $\alpha\in [0,1)$ is the rotation number, $x\in [0,1)$ is the initial point and $(\alpha, x)$ are uniformly distributed.  
 We prove that  $S_N/N$ has a joint limiting distribution in $( \alpha,x)$ as $N$ tends to infinity. 
As a corollary, we get the existence of a limiting distribution for certain trigonometric sums. 
\end{abstract}

\maketitle

The purpose of this paper is the proof of the following theorem.
\begin{thm}\label{limitingexponentialsums}
For any Borel-measurable subset $\Omega \subset \mathbb{C}$ there exists
\be\label{complexdistr}
\lim_{N\rightarrow \infty} Leb \left\{ (\alpha, x ) \st
\frac{1}{N}\sum_{n=0}^{N-1}  \frac{1}{1- e^{2\pi i (n\alpha + x)} } \in \Omega .
\right\} = \mathbb{P}(\Omega)
\ee
Here $Leb$
%= \mu_\G \times Leb$
denotes  the two-dimensional Lebesgue measure on
%product measure of the Gauss measure (with density  \frac{\ud \mu_\G} {\ud x} =
 $[0,1)\times[0,1) $ %l, $(\alpha, x)\in [0,1)\times[0,1)$
 and $\mathbb{P}$ is a probability measure on $\mathbb{C}$.
\end{thm}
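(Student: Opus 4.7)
The plan is to reduce the complex Birkhoff sum to a real sum of cotangents and then analyze the latter using the continued fraction expansion of $\alpha$. From the elementary identity
\[
\frac{1}{1-e^{2\pi i y}} \;=\; \frac{1}{2} + \frac{i}{2}\cot(\pi y)
\]
one obtains
\[
\frac{1}{N}\sum_{n=0}^{N-1}\frac{1}{1-e^{2\pi i(n\alpha+x)}} \;=\; \frac{1}{2} + \frac{i}{2}\,T_N(\alpha,x),
\qquad
T_N(\alpha,x) := \frac{1}{N}\sum_{n=0}^{N-1}\cot\bigl(\pi(n\alpha+x)\bigr),
\]
so it suffices to prove that $T_N$ has a joint limiting distribution under Lebesgue measure on $[0,1)\times[0,1)$. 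The observable $\cot(\pi\cdot)$ is smooth on $(0,1)$, odd about $1/2$, and has a simple pole at $0\equiv 1$ with residue $1/\pi$; this is precisely the $1/x$-type singularity advertised in the abstract.

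Next I would decompose the orbit $\{x+n\alpha\}_{0\le n<N}$ using the Ostrowski numeration $N=\sum_k c_k q_k$, where $p_k/q_k$ are the convergents of $\alpha=[0;a_1,a_2,\ldots]$. Each block of $q_k$ consecutive iterates forms, by the three-distance theorem, an approximate arithmetic progression of step $\sim 1/q_k$ placed nearly symmetrically about the singularity. The oddness of $\cot(\pi\cdot)$ then produces strong pairwise cancellation between points straddling the pole, reducing the contribution of the block to a bounded quantity depending only on its closest approach to $0$ and on finitely many continued-fraction digits. The contribution of iterates lying in a fixed compact subset of $(0,1)$ is uniformly bounded, so, up to a small error, $N\cdot T_N(\alpha,x)$ is governed by the single ``worst'' term $\cot(\pi(x+n^{*}\alpha))$, where $n^{*}:=\operatorname{argmin}_{0\le n<N}\|x+n\alpha\|$.

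The final step is a renormalization argument. Under uniform $(\alpha,x)$, I expect the joint law of $N\|x+n^{*}\alpha\|$, the signed side of the singularity on which the closest return falls, and finitely many continued-fraction digits of $\alpha$ around the level $k$ with $q_k\sim N$, to converge as $N\to\infty$. This should follow from the mixing of the Gauss map for the Gauss measure, together with the equidistribution of $x$ at scale $1/N$. Pushing forward the limiting joint law by the essentially continuous map that encodes all relevant data into a single complex number then identifies the law $\mathbb{P}$ in \eqref{complexdistr}.

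The main obstacle is to control the cancellations in the cotangent sum \emph{uniformly} in $\alpha$ outside a small-measure exceptional set, rather than merely pointwise. Since $\cot(\pi\cdot)\notin L^1$ no classical ergodic theorem is available, and the $1/N$ normalization (as opposed to, say, $1/\log N$) is dictated precisely by the size of the closest-approach term. Proving that this is the correct scaling and that the limit depends measurably on only a finite-dimensional renormalized slice of the continued-fraction data — so that tightness plus finite-dimensional convergence yields genuine distributional convergence — is the core technical point of the argument.
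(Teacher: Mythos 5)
Your reduction of the complex sum to the real cotangent Birkhoff sum and your overall architecture — continued-fraction (Ostrowski) decomposition, symmetry-driven cancellation about the pole, and a renewal-type limit theorem for denominators derived from mixing of the Gauss map's natural extension — coincide with the paper's strategy. The paper's ``cycles'' over the towers of $\xi^{(n)}$ are precisely your Ostrowski blocks, and Theorem~\ref{main} is the claimed joint convergence of the renormalized continued-fraction data.

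The genuine gap is in the middle of your argument. The assertion that ``up to a small error, $N\cdot T_N(\alpha,x)$ is governed by the single `worst' term $\cot(\pi(x+n^{*}\alpha))$'' is not correct, and following it would give the wrong limiting law. After the odd-symmetry cancellation inside a block of length $q_m$, that block still contributes a quantity of size $\sim q_m$ (its closest approach is $\sim\lambda^{(m+1)}\sim 1/q_m$, not $\sim 1/N$); there are up to $\sim a_{m+2}$ such blocks at each scale $m$, so the normalized contribution from scale $m$ is roughly $a_{m+2}\,q_m/N$, geometrically small in the scale gap but with an unbounded coefficient. The limit thus genuinely depends on contributions from a whole range of scales, each governed by \emph{several} near-closest returns (not one) and several continued-fraction digits, and one must prove that truncating to finitely many top scales is a small error with high probability in $\alpha$. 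This requires a nontrivial a priori bound like the paper's Lemma~\ref{powerslemma}, which says $a_{n(N,\alpha)-k}\leq C(k+1)^2$ for $\alpha$ outside a set of measure $<\delta$ and which is what makes the multi-scale series summable; your proposal acknowledges the uniformity issue at the end but supplies no such quantitative input. Finally, to pass from convergence of the $\epsilon,\delta$-approximants to convergence of $S_N/N$ itself you also need tightness of those approximants uniformly in $\epsilon,\delta,N$ (the paper's Lemma~\ref{tightness}); this does not follow from the single-worst-term picture either.
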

In other words, the trigonometric sums $\frac{1}{N}\sum_{n=0}^{N-1} ({1- e^{2\pi i (n\alpha + x)} })^{-1}$ have a limiting distribution. 
\begin{comment}%%%OLD VERSION
\begin{thm}\label{limiting}
For any $a$, $b$, $a<b$ there exists the limit
\bes
\lim_{N\rightarrow \infty} Leb \left\{ (\alpha, x ) \st a\leq \left|
\frac{1}{N}\sum_{n=0}^{N-1}  \frac{1}{e^{2\pi i (n\alpha + x)} -1 }\right| \leq
b \right\} = \mathbb{P}(a,b),
\ees
where $Leb$
%= \mu_\G \times Leb$
is  the two dimensional Lebesgue measure on
%product measure of the Gauss measure (with density  \frac{\ud \mu_\G} {\ud x} =
\ Lebesgue measure on
 $[0,1)\times[0,1) $ %l, $(\alpha, x)\in [0,1)\times[0,1)$
 and the limit $\mathbb{P}$ is a probability measure on $\mathbb{R}$.
\end{thm}
\end{comment}

The theorem follows %considering separately real and imaginary parts of the
%function  and applying
as a corollary from the following more general theorem.

%\noindent
Let $R_{\alpha} (x) = x +\alpha $ (mod $1$) be the rotation by $\alpha \in
\mathbb{R}$ on $[0,1)$. 
%Actually the theorem is valid in a more general situation.
%\begin{thm}\label{limitingthm}
Let $f(x)=f_1(x)+f_2(x)$ where
%$f_1$ and $f_2$ are periodic function of period $1$ and
\begin{itemize}
\item[{\it(i)}]  $f_1:\mathbb{R}\backslash \mathbb{Z} \rightarrow \mathbb{R}$ is
periodic of period $1$ and $C^1$ on $\mathbb{R}\backslash \mathbb{Z}$;
% outside some neighborhood $[-\frac \varepsilon 2, \frac \varepsilon 2]$;
\item[{\it(ii)}] $f_1(x)=\frac{c}{x}$ on  
%$\left[ -\frac{\epsilon}{2}, 0 \right) \cup  \left(0, \frac{\epsilon}{2}\right] $
$\left[ {\epsilon}, 0 \right) \cup  \left(0, {\epsilon}\right] $
%\backslash 0$ %$0\neq |x|\leq \frac{\epsilon}{2}$
for some
$\epsilon < 1$ and $c\ne 0$;
% $f_1$ and $f_2$ are periodic functions of period $1$;
%\item[(iii)]  $f_1$ is $\mathscr{C}^2$ on $\mathbb{R}\backslash \mathbb{Z}$ and
%%%%$f_1(x)\equiv 0$ for $\frac{1}{2}-\frac{\epsilon}{2} \leq |x| \leq \frac{1}{2}$;
%\item[(i)] $f_1(x) = \frac{c}{x} - \frac{c}{1-x}$;
%\item[(ii)]   $f_1(x)=0$, $\frac{1}{2}-\epsilon \leq |x| \leq \frac{1}{2}$;
\item[{\it(iii)}] $f_2$
% is a $C^1$ function on $[0,1]$ periodic of period $1$.
is a $1$-periodic function, which extends to a $C^1$ function on $[0,1]$.
\end{itemize}
\begin{comment}%%%OLDVERSION
be a real valued periodic function on $\mathbb{R}\backslash{\mathbb{Z}}$ of
period $1$ such that, for  $-\frac{1}{2}\leq x \leq \frac{1}{2}$, we have
%where $-\frac{1}{2}\leq x \leq \frac{1}{2}$ and
\begin{itemize}
\item[{\it(i)}] $f_1(x)=\frac{c}{x}$,  $|x|\leq \epsilon$;
\item[{\it(ii)}]   $f_1(x)=0$, $\frac{1}{2}-\epsilon \leq |x| \leq \frac{1}{2}$;
\item[{\it(iii)}] $f_2(x)$ is a $\mathscr{C}^2$-
periodic function of period $1$.
\end{itemize}
\end{comment}
\begin{thm}\label{limitingthm}
For any $a<b$ there exists the limit \bes
\lim_{N\rightarrow \infty} Leb \left\{ (\alpha, x  ) \st a\leq
\frac{1}{N}\sum_{n=0}^{N-1} f(R_{\alpha}^n x) \leq b \right\} =
P(a,b), \ees where $P$ is a probability measure on $\mathbb{R}$.
\end{thm}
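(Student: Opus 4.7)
The plan is to pass from the Birkhoff sum to a series indexed by the continued fraction expansion of $\alpha$, and then to invoke ergodicity of the Gauss map on the resulting coordinates. First, since $f_2$ is $C^1$ and $1$-periodic, for almost every irrational $\alpha$ the averages $\frac{1}{N}\sum_{n=0}^{N-1} f_2(R_\alpha^n x)$ converge uniformly in $x$ to $\int_0^1 f_2$ by unique ergodicity of $R_\alpha$; hence $f_2$ contributes only a deterministic shift in the limit and it suffices to treat $f=f_1$. Absorbing a further $C^1$ perturbation into $f_2$, one may in fact take $f_1(x)=c\cot(\pi x)$ (equivalently, up to scale, $c\cdot\operatorname{Im}(1-e^{2\pi i x})^{-1}$), for which trigonometric identities yield clean renormalization relations between Birkhoff sums at successive scales.

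Next, let $\alpha$ have partial quotients $a_1,a_2,\ldots$ and convergents $p_k/q_k$, and for $N\in[q_k,q_{k+1})$ use the Ostrowski expansion $N=\sum_{j\le k}b_jq_j$, $0\le b_j\le a_{j+1}$, to decompose the orbit segment $\{x+n\alpha\bmod 1:0\le n<N\}$ into blocks of lengths $q_0,q_1,\ldots,q_k$. By the three-distance theorem each such block populates $[0,1)$ essentially evenly at scale $1/q_j$, so the contribution of orbit points far from the singularity approximates a Riemann sum of $c\cot(\pi x)$, while the dominant terms come from the few orbit points nearest $0$, at distance of order $\|q_{j-1}\alpha\|$. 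Summation by parts across scales should then give a representation
\bes
\frac{1}{N}S_N(\alpha,x)=\Phi_k(\alpha,x)+o(1),
\ees
where $\Phi_k$ depends only on the first $k$ CF digits of $\alpha$, the Ostrowski digits $b_j$, and the renormalized position of $x$ relative to $0$ at each level.

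The theorem then reduces to a statement about the renormalization dynamics: under Lebesgue measure on $(\alpha,x)$ the CF digits form a stationary Markov chain driven by the Gauss map with its invariant Gauss--Kuzmin measure, and the renormalized positions converge in distribution to a stationary process on the natural extension. Mixing of the Gauss map then implies that $\Phi_k(\alpha,x)$ converges in distribution to some $\Phi_\infty$, whose law is the desired probability measure $P$. The main obstacle will be the quantitative Riemann-sum analysis at each scale: because $c/x$ is not integrable, Denjoy--Koksma does not apply directly, so one must carefully exploit the cancellation between the two orbit points lying symmetrically on either side of $0$ at each scale in order to obtain an error whose tail over the scales $j$ is summable. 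A secondary technical point is to interpolate continuously in $N$ within $[q_k,q_{k+1})$ and show that the distribution of $S_N/N$ is, after averaging over $(\alpha,x)$, stable under small changes of $N$.
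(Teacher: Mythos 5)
Your outline captures the two central ideas of the paper — decompose $S_N$ along the continued-fraction towers, and exploit the cancellation between orbit points on the two sides of the singularity (the paper's ``almost symmetry'' of $\xi^{(n)}$, Lemmas~\ref{almostsymmetrylemma} and~\ref{translatesz}) to reduce a cycle sum to its closest visits to $0$ and $1$. Your handling of $f_2$ by (unique) ergodicity also matches. But there is a genuine gap in the final step, and it is precisely the hardest ingredient of the paper.

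You assert that ``mixing of the Gauss map then implies that $\Phi_k(\alpha,x)$ converges in distribution.'' This does not work, because the truncation scale $k=n(N,\alpha)$ is itself a random function of $\alpha$ determined by the condition $q_{n-2}\leq N<q_n$, and the normalization constant in $S_N/N$ involves $N$, not $q_n$. Consequently the relevant quantity is the overshoot ratio $q_{n(N)}/N$, together with the entries $a_{n(N)\pm k}$, $|k|\leq K$; these form a renewal-type functional whose limiting joint distribution is \emph{not} a corollary of ergodicity or mixing of the Gauss map alone. The paper isolates this as Theorem~\ref{main} (imported and extended from \cite{SU:ren}), and its proof requires mixing of a special flow built over the natural extension of the Gauss map under a carefully chosen roof. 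Your closing remark — that one should ``interpolate continuously in $N$ within $[q_k,q_{k+1})$'' and show the distribution is ``stable under small changes of $N$'' — points at the right difficulty, but there is nothing stable about it: $N/q_k$ ranges over $[1,a_{k+1}+1)$, which is unbounded, and the law of this ratio is exactly what the renewal theorem controls. Without that ingredient the proposed argument does not close. A secondary soft spot is the appeal to ``clean renormalization relations'' for $c\cot(\pi x)$: no exact renormalization identity is used (or available) in the paper; instead the reduction to finitely many terms per cycle (Proposition~\ref{cycleprop}) comes from the partition symmetry and a summable error, followed by a measure-theoretic truncation to finitely many cycle orders (Proposition~\ref{finitecycleslemma}), a tightness estimate (Lemma~\ref{tightness}), and a continuity-of-discontinuity-set argument (Lemma~\ref{discontinuities}) to legitimize passing to limits in distribution.
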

In other words, the random variables $X_N:=\frac{1}{N}\sum_{n=0}^{N-1}
f(R_{\alpha}^n x)$ considered as functions of $\alpha$ and $x$
have a \emph{limiting distribution}.

Using periodicity of $f_1$ and redefining $f_2$ appropriately, we
can replace {\it $(ii)$} by ${\it (ii)'}$:
%Without loss of generality, we will assume that $f_1$ satisfies, instead than
%It will be often convenient for us to assume that $f_1$ satisfies, instead than
% the following $(ii')$:
\begin{itemize}
\item[{\it (ii)'}] $f_1(x) = \frac{c}{x} - \frac{c}{1-x}$ for $0 < x < 1$.
\end{itemize}
\begin{comment}%%Proof
Redefine $f_2' = f_2 + f_1 - c/x+c/(1-x)$ extended by periodicity.
\end{comment}
In what follows we will assume that $f_1$ and $f_2$ satisfy {\it $(i)$, $(ii)'$}
and {\it $(iii)$}.

Theorem \ref{limitingexponentialsums}  follows from  Theorem
\ref{limitingthm}. Indeed, splitting into real and imaginary part,
we can write \bes
 \frac{1}{1- e^{2\pi i (n\alpha + x)} } = \frac{1}{2} + \frac{i}{2}  f( R_{\alpha}^n x) ,
\qquad f(x): = \frac{\sin 2\pi x}{1- \cos 2\pi x} .
\ees
%where $\frac{1}{2}$ and $f(2\pi \cot) $ are the real part and imaginary part
%respectively.
Then $f(x)$
%of $ \frac{1}{1- e^{2\pi i x} }$ is $\frac{1}{2}$ and the imaginary part
satisfies {\it $(i), (ii)'$ } (with $c=\frac{1}{2\pi}$)  and {\it
$(iii)$}. Hence Theorem \ref{limitingexponentialsums} is a
corollary of Theorem \ref{limitingthm}.
%% TO CORINNA:     Can you formulate it better?
%%%%Maybe CONTORLLA\footnote{Moreover, the probability measure $\mathbb{P}$ in
%Theorem \ref{limitingexponentialsums} is supported on the line
%$\mathrm{Re} z = \frac{1}{2}$ and given by $\mathbb{P}(\Omega) =
%P(\Omega_{\frac{1}{2}})$ where the probability measure $P$ is
%given by Theorem \ref{limitingthm} applied to $f$ and
%$\Omega_{\frac{1}{2}}: = i\left( \Omega \cap (\mathrm{Re} z =
%\frac{1}{2})-\frac{1}{2}\right)$ is the restriction of $\Omega$ to
%$\mathrm{Re} z$ suitably rotated to be contained in the real
%line.}.

Let us use the notation \be\label{BS} \BS{\alpha}{x,f}{N}= \BS{\alpha}{x}{N} :=
\sum_{n=0}^{N-1} f(R^n_{\alpha} x) \ee for the $N^{th}$
non-normalized Birkhoff sum of the function $f$ under
$R_{\alpha}$. The dependence on $f$ will be omitted if there is no ambiguity. Similar theorems can be proved for expressions of
the form
%$ \frac{1}{2N+1}\sum_{n=-N}^N  \frac{1}{e^{2\pi i (n\alpha + x)} -1 }$ and$\frac{1}{2N+1}\sum_{n=-N}^{N} f(R_{\alpha}^n x)$.
\bes
\frac{1}{2N+1}\sum_{n=-N}^N  \frac{1}{e^{2\pi i (n\alpha + x)} -1 }\quad
\mathrm{ and} \quad \frac{1}{2N+1}\sum_{n=-N}^{N} f(R_{\alpha}^n x).
\ees

Another example of Birkhoff sums with this type of singularity is given by the trigonometric series of cosecants, i.e. $\sum_{n=1}^{\infty} \sin(n\pi \alpha)^{-1}$. This series was investigated by Hardy and Littlewood in \cite{HL:som}, where they prove in particular that when $\alpha$ is a quadratic irrational, the corresponding partial sums are uniformly bounded. 
%Our methods can yield a more direct proof using continued fractions of the same results.
     
\subsubsection*{Outline of the proof.}
The strategy of the proof is the following. For any positive
$\epsilon$ and $\delta$ we construct approximate sums
%$m>0$
% we construct an approximate sum
%\be\label{BS}
%\BSa{\alpha}{x}{N}{m} := \frac{1}{N}\sum_{n=0}^{N_m
%%(x, \alpha, N)
%} f(R_{\alpha} x),
%\ee
%where $N_m = N_m (x, \alpha, N)$ depends on $x, \alpha, N$ and
$G^{\epsilon, \delta}_N$, which are close to $\BS{\alpha}{x}{N}$
in probability, i.e. for all sufficiently large $N$~ \bes
 Leb  \{ (\alpha,x) \st \left| G^{\epsilon, \delta}_N -
\frac{1}{N}\BS{\alpha}{x}{N}\right| \geq \epsilon \} \leq \delta.
\ees
%In particular, $G^{\epsilon, \delta}_N$ converge weakly to  $\BS{\alpha}{x}{N}$
%as $\delta$ and $\e\$
Then we prove that, for each $\epsilon$  and $\delta $,
$G^{\epsilon, \delta}_N$ has a limiting distribution as
$N\rightarrow \infty$ and  the distributions of $G^{\epsilon,
\delta}_N$ are weakly compact in $N$,  $\epsilon$  and $\delta $.
%we show that $\BSa{\alpha}{x}{N}{m}$ converge in distribution to
%$\BS{\alpha}{x}{m}$ as $m$ tends to infinity and their distributions are weakly
%compact.
All these statements together allow to prove Theorem \ref{limitingthm}.

Our strategy is to show that $G^{\epsilon, \delta}_N$ %$\BSa{\alpha}{x}{N}{m}$
% have limiting distributions we show that
% to show that the distribution of the expression (\ref{BS})
 can be expressed as
%arbitrarily well-approximated in distribution by $N$--independent
 functions of quantities which do have a limiting
distribution.
 In particular, one of the quantities involved is the ratio $q_{n(N)}/N$ where
$q_n$ are denominators of the continued fraction expansion of $\alpha$ and $n(N)$ is determined by 
$q_{n(N)} \leq N < q_{n(N)+1}$. We  use the renewal-type limit theorem  proved
in \cite{SU:ren} which gives the existence of a limiting distribution for the
ratio $q_{n(N)}/N$. This theorem is recalled and generalized in
\S\ref{renewalsec}.
%for some functions of the continued fraction expansion, which we recall in

The other basic tool is the classical system of partitions of the
unit circle induced by the continued fraction expansion (whose
definition is recalled in \S \ref{partitionssec}). Using this
system of partitions, the Birkhoff sums in (\ref{BS}) are
decomposed onto simpler orbit segments, which we call
\emph{cycles} and analyze separately in \S\ref{cyclesec}. The key
phenomenon which implies the asymptotic behavior of the Birkhoff
sums is the cancellation between positive and negative contributions
to each cycle (see \S\ref{cancellations}) which resemble the
existence of the principal value %\emph{valuer principal}
 in non-absolutely converging
integrals. The decomposition into cycles is explained in
\S\ref{decompositionsec}. The proof of Theorem \ref{limitingthm}
is given in \S\ref{limitingsec}.

\vspace{1mm}
\section{Preliminaries.}\label{preliminariessec}
\subsection{Continued fractions and partitions of the
interval.}
\label{partitionssec} The following system of partitions
exists for any $R_\alpha$ with irrational $\alpha$ (see,
e.g. \cite{Si:top}). Write down the expansion of $\alpha$ as a 
continued fraction: 
\bes \alpha = [a_1, a_2, \dots, a_n,  \dots]
%=  \frac{1}{a_1 + \frac{1}{a_2 + \frac{1}{\dots}}} 
\ees
 and let
$\alpha_n=\frac{p_n}{q_n} = [a_1, a_2, \dots, a_n] $ be the
$n^{th}$ approximant. Let $\{x \}$ be the fractional part of $x$.
Denote by 
\bes
\begin{array}{ll}
\Delta^{(n)}:= \Delta^{(n)}_0 = &
\left\{
\begin{array}{ll} \left[ 0,\{ q_n \alpha \} \right) & \qquad \mathrm{if} \,\,  n
\,\,  \mathrm{is}\,\,  \mathrm{even};
\\
 \left[  \{ q_n \alpha \} ,1  \right) & \qquad \mathrm{if} \,\,  n \,\,
\mathrm{is}\,\,  \mathrm{odd}.
\end{array}
\right.
\end{array}
\ees
\begin{figure}
\centering
\includegraphics[width=0.95\textwidth]{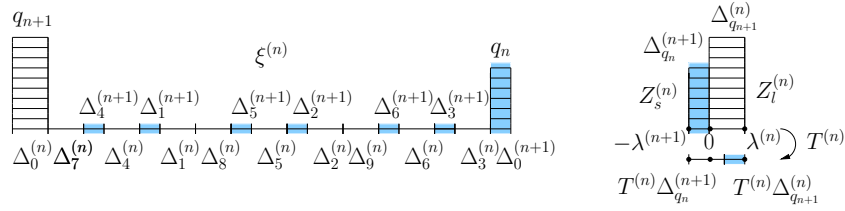}
\caption{The partition $\xi^{(n)}$ \label{torri}
and its representation into towers $Z^{(n)}_{l}$ and $Z^{(n)}_{s}$.}
\end{figure} 
For $n$ even, the intervals $\Delta^{(n)}$ and
$\Delta^{(n+1)}$ are left-most and right-most subintervals of
$[0,1)$, with endpoints  $0$ and $1$ respectively (see Figure \ref{torri}, left). %, while for $n$ odd they are respectively  right-most and left-most subintervals. 
%Denote by $\lambda^{(n)}$ the length of $\Delta^{(n)}$ and 
Put \bes
 \Delta^{(n)}_j := R_{\alpha}^j \Delta^{(n)}_0 .
\ees
\begin{comment}%%%OLDVERSION
$\Delta^{(n)}:= \Delta^{(n)}_0$ the interval whose end-points are
$0$ and $\{ q_n \alpha\}$. For $n$ even (odd) the point $\{ q_n
\alpha\}$ is to the right (to the left) with respect to $0$.
Clearly $\lambda^{(n)}$ the length of the interval $\Delta^{(n)}$.
\end{comment}
Denote by $\lambda^{(n)}$ the length of $\Delta^{(n)}$. 
Clearly $\lambda^{(n)}$ is also the length of any interval $\Delta^{(n)}_j$.

For any $n$, the intervals $ \Delta^{(n)}_j $, $0\leq j < q_{n+1}$
and $ \Delta^{(n+1)}_j $, $0\leq j < q_{n}$ are pair-wise disjoint and their
union is the whole interval $[0,1)$ (see Figure \ref{torri}, left). Denote by $\xi^{(n)}$ the
partition of $[0,1)$ into the intervals $ \Delta^{(n)}_j $ with $0\leq
j < q_{n+1}$ and $ \Delta^{(n+1)}_j $ with $0\leq j <
q_{n}$.  Then $\xi^{(n+1)}\geq \xi^{(n)}$ in the sense of partitions.  %Let us

Consider the union $\Delta(n) := \Delta^{(n)}\cup \Delta^{(n+1)}$.
The set $\Delta(n)$, which, as a subset of $[0,1)$, is the union of
two intervals, can be considered ($\mathrm{mod}\, 1$) as a
subinterval of the unit circle $S^1$, with endpoints on the
opposite sides of $0$, i.~e.~when $n$ is even, $\Delta(n) = [-\lambda^{(n+1)}, \lambda^{(n)})$ (see Figure \ref{torri}, right). Consider the induced map $T^{(n)}$ obtained
as the first return map of $R_{\alpha}$ on  $\Delta(n)$. Then
$T^{(n)}$ is an exchange of the two intervals $\Delta^{(n+1)}$ and
$\Delta^{(n)}$. More precisely, if $n$ is even, then
%$T^{(n)}(x) = x - \lambda^{(n)}$  ($\mathrm{mod}\, 1$) if $x\in  \Delta^{(n)}$
%and $T^{(n)}(x) = x + \lambda^{(n+1)} $  ($\mathrm{mod}\, 1$) if $x\in \Delta^{(n+1)}$ then
\bes
T^{(n)}(x) = \left\{ \begin{array}{lcl} x - \lambda^{(n+1)} &(\mathrm{mod}\, 1)&
\mathrm{if}\,\, x\in  \Delta^{(n)} \\  x + \lambda^{(n)}  & (\mathrm{mod}\, 1)
& \mathrm{if}\,\,  x\in  \Delta^{(n+1)} \end{array} \right.
\ees
 and similarly for  odd $n$.
%, i.e. under the iteration of $T^{(n)}$,

%An useful way to visualize the partition $\xi^{(n)}$ and the action of
%$R_{\alpha}$ is shown in Figure.
% No Figure
Assume  $n$ is even. The intervals $\Delta^{(n)}_j$ and
$\Delta^{(n+1)}_j$ can be represented as floors of two towers, on the 
top of  $\Delta^{(n)}_0$ and $\Delta^{(n+1)}_0$ respectively,
where $j$ increases with the height of the floor in the tower, as in Figure \ref{torri}, left.
Hence the number of floors in the two towers are $q_{n+1}$ and
$q_n$ respectively. Let us denote the two towers by\footnote{The
subscripts \emph{l} and \emph{s} stay for \emph{large} and
\emph{small} respectively, since the tower $Z^{(n)}_{l}$ is both
larger and taller than $ Z^{(n)}_{s}$.} \bes Z^{(n)}_{l} =
\cup_{j=0}^{q_{n+1}-1}\Delta^{(n)}_j ; \quad Z^{(n)}_{s} =
\cup_{j=0}^{q_{n}-1}\Delta^{(n+1)}_j . \ees Under the action of
$R_{\alpha}$  each point not in the last floor (i.e.~not in
$\Delta^{(n)}_{q_{n+1}-1}$ or $\Delta^{(n+1)}_{q_{n}-1}$) moves
vertically upwards to the next floor. The action on the last floor
is determined by $T^{(n)}$: if e.g.~$x\in
\Delta^{(n)}_{q_{n+1}-1}$ and $x=R_{\alpha}^{q_{n+1}- 1}y$ then 
$R_{\alpha}x = T^{(n)}y$.

\subsubsection{Recursive structure of the partitions.}
Let us also recall how to construct $\xi^{(n)}$ inductively. Given
$\xi^{(n)}$, the partition  $\xi^{(n+1)}$ is obtained from
$\xi^{(n)}$ as follows: the intervals $\Delta^{(n+1)}_j$, $0\leq j
< q_{n}$ are also elements of the partition $\xi^{(n+1)}$. Each
$\Delta^{(n)}_j$ is decomposed in $a_{n+2}+1$ subintervals, more
precisely in $a_{n+2}$ intervals of length $\lambda^{(n+1)}$ and
a reminder, which is $\Delta^{(n+2)}_j$ (see for example Figure \ref{3refinements}). If $n$ is even, the
reminder is the left-most interval of $\Delta^{(n)}_j$, while the
other intervals, from left to right, are   $\Delta^{(n+1)}_{q_n+ j+ i
q_{n+1}}$ with $i= 0, \dots, a_{n+2}-1$ (as in Figure \ref{torri}, left). Hence, we have the following remark. 
\begin{rem}\label{separation}
Each pair of intervals of  $\xi^{(n)}$ both belonging to the tower $Z^{(n)}_{s}$ are separated by $a_{n+1}$ partition elements belonging to $Z^{(n)}_{l}$.
\end{rem}

Given $m< n$, consider  $\Delta^{(m)}_j \in   \xi^{(m)}$ . Since  $\xi^{(n)}>
\xi^{(m)}$, $\Delta^{(m)}_j$ is partitioned into elements of $\xi^{(n)}$.
%$\xi^{(n)}(\Delta^{(m)}_j)$ be the partition of $\Delta^{(m)}_j$ into elements
%of  the partition $\xi^{(n)}$ (well defined since $\xi^{(n)}> \xi^{(m)}$).
Analyzing the recursive construction of the partitions
$\xi^{(n)}$, we have the following.
\begin{rem}\label{recursivepartitions}
The partition of $\Delta^{(m)}_j$ into elements of  $\xi^{(n)}$ is completely
determined by $\lambda^{(n)}$,  $\lambda^{(n+1)}$ and $a_{n-k+2}$, $k= 0, \dots,
n-m$.
\end{rem}

\subsection{The renewal-type limit theorem for denominators.}\label{renewalsec}
The existence of the limiting distribution relies on the following limit
theorem. Let  $p_n/q_n$ be the approximants of   $\alpha  = [a_1, a_2, \dots]$ and
$q_n=q_n(\alpha)$ the corresponding denominators as functions of $\alpha$.
%, which is a modification of the main theorem in \cite{SU:ren}.
\begin{thm}[\cite{SU:ren}]\label{main}
Given $N>0$, introduce 
\be \label{n(N)def} n(N) = n(N,\alpha) = \min \{ n \in
\mathbb{N} \, |  \, \,  q_n > N \,\, \mathrm{and } \,\,  n \, \,
\mathrm{is} \,\,\mathrm{even}  \} . \ee Fix also an integer
$M\geq0$. Then the ratio $\frac{q_{n(N)}}{N}$ and the entries
$a_{n(N)+k}$ for $| k | \leq  M$
%,  $a_{n_R-1}, \dots, a_{n_R-N}$
 have a joint limiting distribution, as $N$ tends to infinity, with respect to the uniform distribution on $\alpha$. 
%%%Sinai addition provided that $\alpha$ are distributed according to the Gauss density $p(x)=\dfrac{1}{\ln 2(1+x)}$.
% Gauss measure $\mu_{1}$ given by the density $\frac{\ud \mu_1 }{\ud\alpha}=(\ln 2 (1+\alpha))^{-1}$.
\end{thm}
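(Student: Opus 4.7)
The plan is to reformulate the statement as a renewal-type limit theorem for the cocycle $\log q_n$ over the Gauss map $G\alpha = \{1/\alpha\}$. Since $a_n(\alpha) = a_1(G^{n-1}\alpha)$ and $q_n = a_n q_{n-1} + q_{n-2}$, one checks that $\log q_n$ coincides, up to a uniformly bounded error, with the ergodic sum $-\sum_{k=0}^{n-1}\log G^k\alpha$ (essentially from the identity $|q_n\alpha - p_n| = \prod_{k=1}^{n} G^k\alpha$ and $q_n|q_n\alpha - p_n| \asymp 1$). Under this identification, the defining condition $q_{n(N)-2}\le N < q_{n(N)}$ becomes a first-passage event for this ergodic sum at level $\log N$, and $\log(q_{n(N)}/N)$ is the corresponding overshoot.

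First I would pass to the natural extension $\bar G(\alpha,\beta) = (G\alpha,\, 1/(a_1(\alpha)+\beta))$ on $[0,1)^2$, which carries an explicit absolutely continuous invariant probability measure $\bar\mu$ and is exponentially mixing. The second coordinate $\beta$ records the ``past'' continued-fraction digits, which at time $n$ are essentially the ratio $q_{n-1}/q_n$ and hence determine the entries $a_n, a_{n-1}, \ldots$. Consequently the window $(a_{n(N)+k})_{|k|\le M}$ around the random time $n(N)$ is a local observable on the orbit $\{\bar G^{n}(\alpha,\beta)\}_n$, and $q_{n(N)}/N$ is the exponential of the overshoot. What remains is a Markov-additive renewal theorem: for the Markov-additive process consisting of the chain $\bar G^n$ together with the additive cocycle $\log q_n$, one needs joint convergence in distribution of (overshoot, chain state) at the first passage over level $\log N$ as $N\to\infty$.

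The standard sufficient conditions for such a Markov renewal theorem are non-arithmeticity of the increment $-\log\alpha$, which is immediate here, and a spectral gap for the transfer operator of $\bar G$ on a suitable function space, which is the classical Kuzmin--Wirsing theorem. The main obstacle is that the increment $-\log\alpha$ is unbounded --- $\alpha$ can be arbitrarily small with positive probability --- so one needs a version of the Markov renewal theorem that tolerates heavy (but integrable) tails, together with sharp control of the overshoot distribution; this is the technical heart of the argument and is the content of \cite{SU:ren}. Finally, the requirement that $n(N)$ be \emph{even} presents no real difficulty: by ergodicity of $\bar G$, the parity of the first-passage index has a nondegenerate limiting law, and conditioning on the even-parity event simply selects one component of the Markov renewal limit distribution.
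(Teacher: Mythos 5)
Your proposal is close in spirit to the argument the paper invokes from \cite{SU:ren}: both view the statement as a renewal-type limit theorem for the cocycle $\log q_n$ over the natural extension of the Gauss map, with the ``past'' coordinate encoding the window of entries around the random stopping time. You phrase this via a Markov-additive renewal theorem and a transfer-operator spectral gap (Kuzmin--Wirsing), whereas \cite{SU:ren}, as recalled in \S\ref{appendixthmdiff}, builds a special flow over the natural extension $\h{\G}$ under the roof $\varphi(\h{\alpha})=-\ln(\h{\G}(\h{\alpha})^-)$ and derives the renewal statement from mixing of that flow; these are two standard and essentially interchangeable routes, and both must confront the same unbounded-increment issue you flag.

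The one genuine divergence, and the place where your argument has a gap, is the treatment of the requirement that $n(N)$ be even. You propose to ``condition on the even-parity event,'' but $n(N)$ as defined in (\ref{n(N)def}) is not the first-passage index $m(N):=\min\{n : q_n>N\}$ restricted to even parity: it equals $m(N)$ when $m(N)$ is even and $m(N)+1$ when $m(N)$ is odd. On the odd-parity event one has $q_{n(N)}/N = \bigl(q_{m(N)+1}/q_{m(N)}\bigr)\cdot\bigl(q_{m(N)}/N\bigr)$ and the entry window around $n(N)$ is shifted by one from the window around $m(N)$, so one needs the joint limiting law of the overshoot, the entry window, the parity, and the past ratio $q_{m(N)-1}/q_{m(N)}$, followed by a deterministic post-processing map --- not a mere conditioning that ``selects one component.'' This is fixable but must be spelled out. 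The paper removes the issue at the source: as explained in \S\ref{appendixthmdiff}, it replaces the base transformation $\h{\G}$ by $\h{\G^2}$ and doubles the roof function, so that the first passage of the special flow past level $\ln N$ automatically lands at an even index and Lemma~1 of \cite{SU:ren} is reproved for $f_n(\h{\alpha})=\ln q_{2n}(\h{\alpha})-\sum_{k<n}\varphi(\h{\G^2}^k\h{\alpha})$. That re-engineering is cleaner than the post-processing your route would require.
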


\begin{comment}
Here $\frac{q_{n(N)}}{N}$ is considered as a random variable on the space of
$\alpha\in [0,1)$ and the limiting distribution is with respect to the Lebesgue
measure on  $[0,1)$.
\end{comment}

Theorem \ref{main} means that for each $M\geq0$ there
exists a probability measure $P_M$ on $(1,\infty) \times {\mathbb{N}_+}^{2M+1}$
such that for all $a, b >1$ and $c_k\in \mathbb{N}_+$ with $| k|\leq  M$, 
\be \label{limitingdenom}
\begin{split}
\lim_{N\rightarrow \infty}  Leb  \left\{ \alpha  \st a < \frac{q_{n(N)}(\alpha)}{N} <
b,  \quad a_{n(N)+ k} = c_k, \, | k|\leq  M \right\} =% \xrightarrow{N\rightarrow\infty
\\
P_M \left(
(a,b), c_{-M}, \dots, c_{M} \right).
\end{split}
\ee
%\end{comment}
Theorem \ref{main} is a slight modification of Theorem $1$, \cite{SU:ren}. The
differences and a sketch on how to modify the proof of Theorem $1$ in
\cite{SU:ren} to obtain Theorem \ref{main} are pointed out in the Appendix \S\ref{appendixthmdiff}.

As a corollary of Theorem \ref{main}, we have the following.
\begin{cor}\label{limitingquantities}
The quantities
\bes \frac{q^{(n(N))}}{q^{(n(N)+1)}}, \quad \frac{\lambda^{(n(N)+1)}}{\lambda^{(n(N))}}, \quad \frac{1}{q^{(n(N))}\lambda^{(n(N)+1)}}, \quad \frac{1}{q^{(n(N)+1)}\lambda^{(n(N))}}
\ees
  %$\frac{q^{(n(N))}}{q^{(n(N)+1)}}$,$\frac{\lambda^{(n(N)+1)}}{\lambda^{(n(N))}}$,$\frac{1}{q^{(n(N))}\lambda^{(n(N)+1)}}$ and $\frac{1}{q^{(n(N)+1)}\lambda^{(n(N))}}$,
 have a limiting distribution as $N$
tends to infinity.
\end{cor}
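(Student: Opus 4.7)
The strategy is to express all four quantities in terms of the two fundamental ratios
\[
A_N := \frac{q_{n(N)}}{q_{n(N)+1}}, \qquad B_N := \frac{\lambda^{(n(N)+1)}}{\lambda^{(n(N))}},
\]
and then to extract the joint limiting distribution of $(A_N,B_N)$ from Theorem \ref{main}. Using the classical identity $q_{n+1}\lambda^{(n)}+q_n\lambda^{(n+1)}=1$, a short algebraic manipulation gives
\[
\frac{1}{q_{n(N)+1}\lambda^{(n(N))}}=1+A_N B_N, \qquad \frac{1}{q_{n(N)}\lambda^{(n(N)+1)}}=1+\frac{1}{A_N B_N},
\]
so every quantity in the corollary is a continuous function of $(A_N,B_N)$ away from $A_N B_N=0$; degeneration happens only when $a_{n(N)+1}$ or $a_{n(N)+2}$ is very large, an event whose probability is controlled because the one-step marginals coming from Theorem \ref{main} assign no mass at $\infty$.

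Next I would use the recursions $q_{n+1}=a_{n+1}q_n+q_{n-1}$ and $\lambda^{(n-1)}=a_{n+1}\lambda^{(n)}+\lambda^{(n+1)}$ to obtain the continued-fraction representations
\[
A_N=[0;a_{n(N)+1},a_{n(N)},\dots,a_1], \qquad B_N=[0;a_{n(N)+2},a_{n(N)+3},\dots].
\]
So $A_N$ depends only on past entries (read in reverse) and $B_N$ only on future entries. For any fixed $M$, let $A_{N,M}$ and $B_{N,M}$ denote the truncations involving only the entries $a_{n(N)+k}$ with $|k|\le M+1$. The standard convergent-error bound together with the universal estimate $q_M\geq F_M$ for the Fibonacci numbers yields
\[
|A_N-A_{N,M}|+|B_N-B_{N,M}|\le C\,F_M^{-2},
\]
uniformly in $\alpha$.

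Theorem \ref{main} provides a joint limiting distribution for the entries $a_{n(N)+k}$, $|k|\le M+1$ (together with $q_{n(N)}/N$, which we discard here). Since $(A_{N,M},B_{N,M})$ is a continuous function of this finite vector, it converges in distribution as $N\to\infty$, for each fixed $M$. Combining this with the uniform approximation from the previous step, a routine weak-compactness argument (tight families on a compact subset of $(0,1]^2$, with a uniform Cauchy-in-$M$ property) lets one pass to the limit $M\to\infty$ and conclude that $(A_N,B_N)$ itself has a joint limiting distribution; pushing forward through the continuous formulas above gives the desired limit for all four quantities. The main nuisance I anticipate is uniform control of the truncation error and of the reciprocal $1/(A_N B_N)$ when the partial quotients $a_{n(N)+1},a_{n(N)+2}$ are unbounded; both issues are handled by the lack of atoms at infinity in the limiting distribution of Theorem \ref{main} combined with the Fibonacci lower bound for convergent denominators.
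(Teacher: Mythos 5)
Your proposal is correct and follows essentially the same route as the paper: you express the quantities via the continued-fraction formulas $q_{n}/q_{n+1}=[a_{n+1},\dots,a_1]$ and $\lambda^{(n+1)}/\lambda^{(n)}=[a_{n+2},a_{n+3},\dots]$, approximate by finite truncations with a uniform exponential error bound, invoke Theorem \ref{main} for the joint limiting distribution of the finitely many entries $a_{n(N)+k}$, and pass to the limit; the algebraic identity you use, $q_{n+1}\lambda^{(n)}+q_n\lambda^{(n+1)}=1$, gives the same expressions $1+A_NB_N$ and $1+1/(A_NB_N)$ that the paper derives from $\lambda^{(n)}=1/(q_{n+1}+q_n\alpha_{n+1})$. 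Your explicit attention to the behavior of $1/(A_NB_N)$ at the discontinuity is a sound point that the paper glosses over, but it does not change the structure of the argument.
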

\begin{proof}
%Consider $q_n$ as a function $q_n(\alpha)$ of the rotation number $\alpha =[a_1, a_2, \dots]$.
 Let us recall that $q_n$ and $\lambda^{(n)}$
satisfy the following recurrent relations (see \cite{Kh:con} and \cite{Si:top}
respectively):
\be\label{recursiverelations}
q_{n+1}= a_{n+1} q_n + q_{n-1}, \qquad \lambda^{(n-1)}= a_{n+1} \lambda^{(n)} +
\lambda^{(n+1)}, \qquad n\geq 1.
\ee
% $q_{n+1}= a_{n+1} q_n + q_{n-1}$ (see \cite{Kh:con}), while $\lambda^{(n)}$
%satisfy the recurrent relations $\lambda^{(n-1)}= a_{n+1} \lambda^{(n)} +
%\lambda^{(n+1)}$ (see \cite{Si:top}).
Using them inductively (see \cite{Kh:con} or \cite{SU:ren}), it is easy to show
that
\bes
\frac{q_{n}}{q_{n+1}} = [a_{n+1}, a_{n}, \dots, a_1] , \qquad
\frac{\lambda^{(n+1)}}{\lambda^{(n)}} = [a_{n+2}, a_{n+3}, \dots] .
%:=  \frac{1}{a_1 + \frac{1}{a_2 + \frac{1}{\dots}}}
\ees

Moreover, reasoning as in \cite{SU:ren}, we also have 
%\bes
%\left|\frac{q_{n}}{q_{n+1}} - [a_{n+1}, a_{n}, \dots,
%a_{n-K}]\right|\leq \frac{1}{{2}^{\frac{K+1}{2}}};\quad  \left|
%\frac{\lambda^{(n+1)}}{\lambda^{(n)}} - [a_{n+2}, a_{n+3}, \dots, a_{n+K}]
%\right| \leq \frac{1}{{2}^{\frac{K-2}{2}}}, \ees 
\begin{eqnarray}
&&\left|\frac{q_{n}}{q_{n+1}} - [a_{n+1}, a_{n}, \dots,
a_{n-K}]\right|\leq \frac{1}{{2}^{\frac{K+1}{2}}}; \nonumber \\ && \nonumber \left|
\frac{\lambda^{(n+1)}}{\lambda^{(n)}} - [a_{n+2}, a_{n+3}, \dots, a_{n+K}]
\right| \leq \frac{1}{{2}^{\frac{K-2}{2}}}, \end{eqnarray}
where the
exponential convergence is uniform in $\alpha$. Hence, since by
Theorem \ref{main}, for each $K$,  $a_{n(N)}$,  $a_{n(N)\pm 1},
\dots, a_{n(N)\pm K}$ have a joint limiting distribution as $N$
tends to infinity,
%%%and we can express
%\bes
%\frac{q_{n}}{q_{n+1}} = \left( \frac{q_{n}/q_{n+1}}{ [a_n, a_{n-1}, \dots,
%a_{n-K}]}-1  \right) [a_n, a_{n-1}, \dots, a_{n-K}] +  [a_n, a_{n-1}, \dots,
%a_{n-K}]
%\ees,
  $\frac{q_{n(N)}}{q_{n(N)+1}}$ and
$\frac{\lambda^{(n(N)+1)}}{\lambda^{(n(N))}}$ also have a limiting
distribution.

For the last two quantities, recall, e.g.~from \cite{Kh:con}, that
\bes
\lambda^{(n)} = \left|q_n \alpha  - p_n \right| = \frac{1}{q_{n+1} + q_n
\alpha_{n+1}}, \quad \mathrm{where} \, \,  \alpha_{n+1} = \G^{n+1}\alpha = [a_{n+2},
a_{n+3},  \dots ].
\ees
Hence, in particular % $\frac{1}{2} \leq \lambda^{(n)} q_{n+1}  \leq 1$ and
%$\lambda^{(n+1)} q_{n}\leq 1$. 
\be\label{towersareas}
 \frac{1}{2} \leq \lambda^{(n)} q_{n+1}  \leq 1, \qquad \lambda^{(n+1)} q_{n}\leq 1.
\ee
Moreover, since
\bes
\frac{1}{q_{n(N)+1}\lambda^{(n(N))}} = 1 + \frac{q_{n(N)}}{q_{n(N)+1}}
[a_{n(N)+2}, a_{n(N)+3}, \dots]
\ees
the ratio
$\frac{1}{q_{n(N)+1}\lambda^{(n(N))}}$ and similarly 
$\frac{1}{q_{n(N)}\lambda^{(n(N)+1)}}$  have limiting distributions.
\end{proof}
\section{Analysis of a cycle.}\label{cyclesec}

In this section and in \S\ref{decompositionsec}, we consider only Birkhoff sums of the function $f_1$. Since $f_2$ is
integrable, Birkhoff sums of $f_2$ are easily  controlled  in
\S\ref{limitingsec} with the help of Birkhoff ergodic theorem.

We first investigate in this section a special type of Birkhoff sum, which is used in 
\S\ref{decompositionsec} as a building block to decompose any other Birkhoff
sum. Assume that ${x}\in \Delta^{(n)}$ and $q=q^{(n+1)}$ if $n$ is even or
$q=q^{(n-1)}$ if $n$ is odd and consider the Birkhoff sum $\BS{\alpha}{x}{q}$.
We call the orbit segment $\{ R_{\alpha}^i {x}, \, i=0, \dots, q-1\}$ a
\emph{cycle} and  $\BS{\alpha}{x}{q}$ is a \emph{sum over a cycle}.
We remark that all points of a cycle are
contained in  the same tower and  there is exactly one point in each floor of the tower; for this
reason, we sometimes refer to $\BS{\alpha}{x}{q}$ as a \emph{sum over a tower}
(see also \cite{Ul:mix}). In section \S\ref{decompositionsec} we will refer to
$n$ as the \emph{order} of the cycle. %Indeed, the points of a cycle are exactly one

To simplify the analysis, we assume in what follows that $n$ is even
and consider only the partitions  $\xi^{(n)}$ with $n$ even and their cycles.
The following proposition shows that the value of a sum over a cycle is
determined essentially by the closest point to the endpoint.
\begin{prop}\label{cycleprop} Let  $\BS{\alpha}{x}{q}$ be a sum along a cycle,
$q=q_{n+1}$ if $x\in \Delta^{(n)}$ or $q=q_{n}$ if  $x\in \Delta^{(n+1)}$.
For each $\epsilon>0$ there exist $K=K(\epsilon)$ and
functions $g_n^{\epsilon}(\alpha, {x}, q)$, $n\in \mathbb{N}$, which depend
only on the following quantities
\be\label{gdependence}
g_n^{\epsilon}  (\alpha, {x}, q) = \left\{ \begin{array}{ll} g_n^{\epsilon}
\left( \frac{{x}}{\lambda^{(n)}},  \frac{\lambda^{(n+1)}}{\lambda^{(n)} },
\frac{1}{q_{n+1} \lambda^{(n)} },  a_{n+2}, a_{n+1},
 \dots, a_{n- K} \right) %, 0, \leq k \leq  K
& \, \,   \mathrm{if}\, {x}\in  \Delta^{(n)} %\, q=q_{n+1}; 
\\g_n^{\epsilon}
\left( \frac{{x}}{\lambda^{(n+1)}},  \frac{\lambda^{(n+1)}}{\lambda^{(n)} },
\frac{1}{q_{n} \lambda^{(n+1)} },  a_{n+2}, a_{n+1},
 \dots, a_{n- K} \right) %, 0, \leq k \leq  K
& \, \, \mathrm{if}\, {x}\in  \Delta^{(n+1)} %  \, q=q_{n};
\end{array} \right.
\ee
and such that,
%, if  $\BS{\alpha}{x}{q}$ is the sum along a cycle, % (${x}\in \Delta^{(n)}$
letting $q=q^{(n+1)}$ if $x\in \Delta^{(n)}$ or $q=q_{n}$ if  $x\in \Delta^{(n+1)}$, 
%.if $n$ is even or $q=q^{(n)}$ if $n$ is odd, 
we have
\be\label{gapproximates}
\left| \frac{1}{q}  \BS{\alpha}{{x}}{q} - g_n^{\epsilon} (\alpha, {x}, q)
\right| \leq %\left(1+\frac{a_{m+1}}{a_{m+2}}\right)
 \epsilon.
\ee
\end{prop}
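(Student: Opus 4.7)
The plan is to approximate the normalized cycle sum $q^{-1}\BS{\alpha}{x}{q}$ by an explicit function of the data in (\ref{gdependence}), with error at most $\epsilon$ once we allow enough continued fraction entries. I work in the case $n$ even and $x\in \Delta^{(n)}$, so $q=q_{n+1}$; the other case is symmetric. The orbit points $y_j := R_{\alpha}^j x$ for $j=0,\dots,q_{n+1}-1$ satisfy $y_j\in\Delta^{(n)}_j$, and as $x$ varies linearly in $\Delta^{(n)}_0=[0,\lambda^{(n)})$ each $y_j$ varies linearly in its floor; the ratio $x/\lambda^{(n)}$ records this offset simultaneously for all $j$.

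The key idea is a two-scale decomposition. Set $m:=n-K<n$ with $K=K(\epsilon)$ to be fixed later, and group the orbit points according to which cell of the coarser partition $\xi^{(m)}$ contains them. By Remark~\ref{recursivepartitions}, the refinement of each cell $\Delta^{(m)}_i$ into cells of $\xi^{(n)}$ is completely determined by $\lambda^{(n)}$, $\lambda^{(n+1)}$, and the partial quotients $a_{n+2},a_{n+1},\dots,a_{m+2}$; consequently the number of orbit points in each $\Delta^{(m)}_i$ and their relative positions there depend only on these data together with $x/\lambda^{(n)}$.

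Next, separate bulk from singular cells. For cells $\Delta^{(m)}_i$ at distance at least $\lambda^{(m)}$ from both $0$ and $1$, $f_1$ is $C^1$ on $\Delta^{(m)}_i$ with derivative bounded by $C/d^2$, where $d$ is that distance, and the standard Riemann-sum estimate at spacing $\lambda^{(n)}$ gives
\[
\lambda^{(n)}\sum_{j:\,y_j\in\Delta^{(m)}_i}f_1(y_j)=\int_{\Delta^{(m)}_i\cap Z^{(n)}_l}f_1(t)\,dt+O(\lambda^{(n)}\lambda^{(m)}/d^2).
\]
Summing over all bulk cells, using $q_{n+1}\lambda^{(n)}\asymp 1$ together with the geometric bound $\lambda^{(n)}/\lambda^{(m)}\lesssim 2^{-K/2}$ that underlies the proof of Corollary~\ref{limitingquantities}, the total bulk error after dividing by $q_{n+1}$ is $O(\lambda^{(n)}/\lambda^{(m)})<\epsilon/2$ once $K$ is large enough. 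What remains is $(q_{n+1}\lambda^{(n)})^{-1}$ times a sum of integrals of $f_1$ over the bulk portion of $Z^{(n)}_l$; by Remark~\ref{recursivepartitions} this sum is an explicit function of $\lambda^{(n+1)}/\lambda^{(n)}$ and of $a_{n+2},\dots,a_{m+2}$, which is the right functional form.

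For the $O(1)$ exceptional cells of $\xi^{(m)}$ closest to $0$ and $1$ no direct Riemann approximation is available, but the precise positions of the finitely many orbit points inside each such cell are again completely determined, via Remark~\ref{recursivepartitions}, by $x/\lambda^{(n)}$, $\lambda^{(n+1)}/\lambda^{(n)}$ and the listed partial quotients. Their contribution is therefore itself an explicit function of the admissible data, and one defines $g_n^\epsilon$ as the sum of this near-singular term and the bulk-integral term. The main obstacle is that the near-singular sum and the part of the bulk integral coming from the boundary of $Z^{(n)}_l$ near $0$ and $1$ are each individually of order $\log(1/\lambda^{(n)})$, which diverges with $n$; only the cancellation built into $f_1(x)=c/x-c/(1-x)$ by condition $(ii)'$ -- the principal-value phenomenon announced in the outline -- makes their sum bounded uniformly in $n$ and expressible through the finite list of variables appearing in (\ref{gdependence}).
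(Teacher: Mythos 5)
Your plan---split the cycle into a bulk part handled by Riemann--sum approximation plus a near--singular part near $0$ and $1$---is a reasonable first idea, but it leaves the two hardest points unproved, and one of your intermediate claims is actually incorrect.

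The more serious problem is that you assert, rather than establish, the cancellation between $c/x$ and $-c/(1-x)$ near the singularity. You write that ``only the cancellation built into $f_1(x)=c/x-c/(1-x)$ \dots makes their sum bounded uniformly in $n$,'' attributing the cancellation purely to the form of $f_1$. But $f_1$ being odd about $1/2$ is not enough: the cycle points are not symmetric about $1/2$, so the sorted distances $x_i$ from $0$ and $y_i$ from $1$ are \emph{a priori} two unrelated sequences. The paper's whole machinery in \S\ref{almostsymmetry} and \S\ref{cancellations} exists precisely to replace this hand-wave by a proof: Lemma \ref{almostsymmetrylemma} (the partition $\xi^{(n)}$ is almost symmetric under $x\mapsto 1-x$), Lemma \ref{translatesz} (the midpoints are rigid translates of each other up to end effects), Corollary \ref{translatesxjyj} ($y_i-x_{i+1}$ is a single constant $\lambda^{(n+1)}-2x_0$), and finally Lemma \ref{truncation}, where the identity (\ref{cycleminusfirstpoints})--(\ref{secondrearrang}) turns the tail of the cycle sum into $\sum_{i\ge k}(\lambda^{(n+1)}-2x_0)/(y_i x_{i+1})$, which is dominated by $\sum i^{-2}$ by Remark \ref{minimumdistance}. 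That telescoping is the actual content of the proposition, and it cannot be recovered from general principal-value considerations; you need the combinatorics of the partitions.

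The secondary problem is the claim that ``$(q_{n+1}\lambda^{(n)})^{-1}$ times a sum of integrals of $f_1$ over the bulk portion of $Z^{(n)}_l$'' is, by Remark \ref{recursivepartitions}, an explicit function of $\lambda^{(n+1)}/\lambda^{(n)}$ and $a_{n+2},\dots,a_{m+2}$. Remark \ref{recursivepartitions} only controls the refinement of each $\Delta^{(m)}_j$ into cells of $\xi^{(n)}$, i.e.\ the \emph{internal} structure, not the \emph{positions} of the $\Delta^{(m)}_j$ in $[0,1)$. The integral $\int_{\Delta^{(m)}_j\cap Z^{(n)}_l}f_1$ depends on where $\Delta^{(m)}_j$ sits, which is governed by $\alpha$ through all the entries $a_1,\dots,a_m$, not just the listed ones. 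To make this part of the argument close you would need a second approximation (e.g.\ a Koksma/discrepancy estimate replacing the bulk Birkhoff sum by $\int_{\mathrm{bulk}}f_1=0$), uniformly over the $\alpha$ not discarded---none of which appears in your sketch. The paper's route via the almost-symmetry avoids Riemann sums entirely, works with a single convergent series, and keeps the dependence on $x/\lambda^{(n)}$, $\lambda^{(n+1)}/\lambda^{(n)}$, $1/(q_{n+1}\lambda^{(n)})$ and $a_{n-K},\dots,a_{n+2}$ manifest in the explicit formula (\ref{gdef}) for $g_n^\epsilon$.
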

The proof of Proposition \ref{cycleprop} is given in \S\ref{cancellations}. The
key ingredient which allows to reduce the sum over a cycle to finitely many terms
(and hence to an expression given by  $g_\epsilon$ depending on the above
variables) is that there are cancellations between the two sides, positive and
negative, of the singularity. The cancellations occur because the sequence of
closest points to $0$ is given by a rigid translate of the sequence of closest
points to $1$ (see Corollary \ref{translatesxjyj}). In order to prove this fact,
we first show, in \S\ref{almostsymmetry}. that the partitions $\xi^{(n)}$ have a
property of almost symmetry (see Lemma \ref{almostsymmetrylemma}, in %
\S\ref{almostsymmetry}).
% The reason for this cancellations (exploited in \S\ref{cancellations}) is that

\subsection{ Almost symmetry of  the partitions.}\label{almostsymmetry}
Consider the partition $\xi^{(n)}$
%with $n$ even (the restriction on the parity simplify the number of cases in
%the analysis, but similar properties could be proven for $n$ odd).
%and let $q=q_{n+1}$ if $n$ is odd, $q=q_{n}$ is
and let $z^{(n)}_i$, for $i=0,\dots, q_{n+1}-1$ denote the middle points of the
intervals $\Delta^{(n)}_j$, $0\leq j < q_{n+1}$, rearranged in increasing order,
so that  $z^{(n)}_0< z^{(n)}_1<\dots < z^{(n)}_{q_{n+1}-1}$ and similarly let
$z^{(n+1)}_i$, for $i=0,\dots, q_{n}-1$ be the middle points of the intervals
$\Delta^{(n+1)}_j$, $0\leq j < q_{n}$ rearranged in increasing order (see Figure \ref{zdisplacement}).

Since we are interested in comparing the functions $\frac{1}{x}$ and
$\frac{1}{1-x}$ evaluated along orbits segments which contain a point inside
each of these intervals, we want to understand what happens to the middle points
under the reflection $x\mapsto \sigma(x):=(1-x)$.

Consider the set of reflected  points  $\{ 1- z^{(n)}_i$, for $i=0,\dots,
q_{n+1}-1\}$ and  let  $z'^{(n)}_i$, for $i=0,\dots, q_{n+1}-1$ denote its
elements rearranged in increasing order. Similarly, let  $z'^{(n+1)}_i$, for
$i=0,\dots, q_{n}-1$  be the monotonical rearrangements of the points  $\{ 1-
z^{(n+1)}_i$, for $i=0,\dots, q_{n}-1\}$.
% $\{ 1- z^{(n+1)}_i$, for $i=0,\dots, q_{n}-1\}$ and let  $z'^{(n)}_i$, for %$i=0,\dots, q_{n+1}-1$  and   $z'^{(n+1)}_i$, for $i=0,\dots, q_{n}-1$

%Let us assume that $n$ even (the restriction on the parity simplify the number
%of cases in the statement, but similar properties could be proven for $n$ odd).
\begin{figure}
\centering
\includegraphics[width=0.95\textwidth]{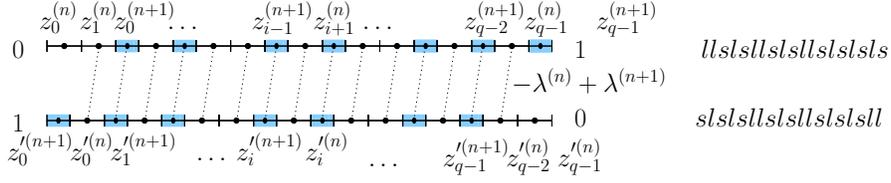}
\caption{An example of the relations (\ref{translatesn}, \ref{translatesn+1}) between  $z^{(n)}_i $, $z^{(n+1)}_i $ and $z'^{(n)}_i $, $z'^{(n+1)}_i $.%  in Lemma \ref{translatesz}.
\label{zdisplacement}}
\end{figure} 
\begin{lemma}\label{translatesz} Let $n$ be even.
The two sequences given by the points $z^{(n)}_i $ and the points $z^{(n+1)}_i $
respectively, excluding the closest point to $0$, i.e.~$z^{(n)}_0$, and the
closest point to $1$, i.e.~$z^{(n+1)}_{q_n-1}$, are rigid translates of each
other, i.e. they satisfy:
\begin{eqnarray}
z'^{(n)}_i  &=& z^{(n)}_{i+1} + \lambda^{(n+1)} -  \lambda^{(n)}; \qquad i=0,
\dots, q_{n+1}-2 \label{translatesn}\\ z'^{(n+1)}_i  &=& z^{(n+1)}_{i-1}  +
\lambda^{(n+1)} - \lambda^{(n)} , \qquad  i=1, \dots, q_{n}-
1.\label{translatesn+1}
\end{eqnarray}
\end{lemma}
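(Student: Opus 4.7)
The plan is to introduce the auxiliary involution $T(x) = (1-x) + (\lambda^{(n)}-\lambda^{(n+1)}) \bmod 1$ and to show that, after removing the two extremal midpoints singled out in the statement, $T$ restricts to an orientation-reversing bijection of the remaining big midpoints and, separately, of the remaining small ones. The identities in the lemma then follow by comparing each sorted list to its reverse.

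First I would write explicit unsorted formulas for the midpoints. Since $n$ is even, $\Delta^{(n)}_0 = [0,\lambda^{(n)})$ and $\Delta^{(n+1)}_0 = [1-\lambda^{(n+1)},1)$, so iterating $R_\alpha$ gives $m^{(n)}_j := \frac{\lambda^{(n)}}{2} + j\alpha \bmod 1$ for $j = 0,\dots,q_{n+1}-1$ and $m^{(n+1)}_j := 1 - \frac{\lambda^{(n+1)}}{2} + j\alpha \bmod 1$ for $j = 0,\dots,q_n-1$. Using the closest-return identities $\{q_n\alpha\} = \lambda^{(n)}$ and $\{q_{n+1}\alpha\} = 1-\lambda^{(n+1)}$ (valid precisely because $n$ is even), a short mod-$1$ computation yields $T(m^{(n)}_j) \equiv m^{(n)}_{q_{n+1}-j}$ and $T(m^{(n+1)}_j) \equiv m^{(n+1)}_{q_n-j}$. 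In each case the index $j=0$ has no valid partner in the admissible range, and this corresponds exactly to the two excluded points $z^{(n)}_0 = \frac{\lambda^{(n)}}{2}$ and $z^{(n+1)}_{q_n-1} = 1 - \frac{\lambda^{(n+1)}}{2}$.

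Next I would upgrade the congruences to honest equalities in $[0,1)$. Viewed as a real function, $T(x) = 1-x+\lambda^{(n)}-\lambda^{(n+1)}$ lies in $[0,1)$ iff $x > \lambda^{(n)}-\lambda^{(n+1)}$. Because $\Delta^{(n)}_0$ is the only element of $\xi^{(n)}$ contained in $[0,\lambda^{(n)})$, every midpoint other than $m^{(n)}_0$ already lies in $(\lambda^{(n)},1)$, so no wrap-around occurs at any non-excluded point. Hence $T$ is a genuine order-reversing bijection of each of the two non-excluded midpoint sets to itself, which forces $T(z^{(n)}_k) = z^{(n)}_{q_{n+1}-k}$ for $k=1,\dots,q_{n+1}-1$ and $T(z^{(n+1)}_k) = z^{(n+1)}_{q_n-2-k}$ for $k=0,\dots,q_n-2$. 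Unfolding $T$ in the first identity, taking $k=i+1$, and substituting $z'^{(n)}_i = 1 - z^{(n)}_{q_{n+1}-1-i}$ yields (\ref{translatesn}); the analogous manipulation in the small tower yields (\ref{translatesn+1}).

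I expect the main obstacle to be the mod-$1$ bookkeeping in the middle step: ruling out wrap-around at every non-excluded midpoint is what actually uses the geometric input that $\Delta^{(n)}_0$ and $\Delta^{(n+1)}_0$ are the unique partition elements meeting their respective boundary regions, rather than just the algebraic bijection $j \leftrightarrow q_{n+1}-j$.
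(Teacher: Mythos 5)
Your proposal is correct, and it takes a genuinely different route from the paper's. The paper derives Lemma \ref{translatesz} from the combinatorial Lemma \ref{almostsymmetrylemma}, which asserts (and proves by induction on the recursive construction of $\xi^{(n)}$) that the coding strings $\underline{\omega}^{(n)}$ and $\underline{\omega}'^{(n)}$ agree except at their first and last letters; the rigid-translate relations for midpoints then drop out by matching up letters position by position. Your argument bypasses the symbolic coding entirely: you use the closed-form expressions $\lambda^{(n)}/2 + j\alpha \bmod 1$ and $1 - \lambda^{(n+1)}/2 + j\alpha \bmod 1$ for the midpoints, and the parity-dependent closest-return identities $\{q_n\alpha\} = \lambda^{(n)}$, $\{q_{n+1}\alpha\} = 1 - \lambda^{(n+1)}$ collapse the claim to the elementary index pairing $j \leftrightarrow q_{n+1}-j$ (resp.\ $j \leftrightarrow q_n-j$) under the affine involution $T$. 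Your middle step --- promoting the mod-$1$ congruence to a genuine equality on $[0,1)$ by noting that every non-excluded midpoint exceeds $\lambda^{(n)} > \lambda^{(n)}-\lambda^{(n+1)}$ because $\Delta^{(n)}_0 = [0,\lambda^{(n)})$ is itself an atom of $\xi^{(n)}$ --- is exactly the right geometric input and is correct. Your route is shorter and more arithmetic; one thing it forgoes is that the paper's detour through Lemma \ref{almostsymmetrylemma} is not wasted effort, since that coding-string symmetry is reused independently later (e.g.\ in Remark \ref{firstlastpoints}), so replacing the paper's proof with yours would still leave that combinatorial fact to be established separately.
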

The restriction on the parity simplify the number of cases in the statement, but
similar properties could be proved for $n$ odd.

%In order to prove the Lemma,
Lemma \ref{translatesz} will follow as a corollary of an almost-symmetry
property of the partitions $\xi^{(n)}$ (Lemma \ref{almostsymmetrylemma} below).
Let us consider the following coding of the partitions $\xi^{(n)}$. The unit
interval $[0,1)$ is decomposed into $q_n+q_{n+1}$ subintervals which are elements of
the partition $\xi^{(n)}$ and either belong to $Z^{(n)}_l$ (i.e.~are of the form
$\Delta^{(n)}_j$ for some $0\leq j < q_{n+1}$) or to $Z^{(n)}_s$ (i.e.~are of
the form $\Delta^{(n+1)}_j$ for some $0\leq j < q_{n}$). We will call them
intervals of type $l$ and type $s$ respectively (large or short). Let
$\underline{\omega}^{(n)}= \omega^{(n)}_1 \cdots \omega^{(n)}_{q_n+q_{n+1}}$ be a
string of letters $l$ and $s$, where $\omega^{(n)}_i=l $ or $\omega^{(n)}_i=s $
according to the type of the $i^{th}$ interval of $\xi^{(n)}$  (where intervals of the partition are ordered from left to right in $[0,1)$). %\footnote{
 % is contained in   $Z^{(n)}_l$ or $Z^{(n)}_s$ respectively.
For example, the string coding the partition $\xi^{(n)}$ in Figure \ref{zdisplacement} (which is the same that appears also in Figure \ref{torri}) is $\underline{\omega}^{(n)}=llslsllslsllslslsls$.
\begin{comment}
For example, if $[0,1) = \Delta^{(n+1)}_1\cup \Delta^{(n)}_4 \cup \Delta^{(n)}_2
\cup  \Delta^{(n+1)}_2 \cup \Delta^{(n)}_5 \cup \Delta^{(n)}_3 \cup
\Delta^{(n)}_1 $, where the union is written in the order in which the intervals
occur from left to right, then  $\underline{\omega}^{(n)}= (s l l s l l l)$.%}.
\end{comment}

Let $\underline{\omega}'^{(n)}= \omega'^{(n)}_{q_{n+1}}, \dots, \omega'^{(n)}_1$
be the \emph{reflected string}, which encodes the type of intervals after the
reflection $x\mapsto 1-x$.  Then, the following almost-symmetry property is
satisfied by the partitions $\xi^{(n)}$.
\begin{lemma}[almost symmetry of $\xi^{(n)}$]\label{almostsymmetrylemma}
For all $n$, all the letters of the strings $\underline{\omega}^{(n)}$ and
$\underline{\omega}'^{(n)}$ coincide with the exception of the first and last,
i.e.~%$\omega'^{(n)}_{i} = \omega^{(n)}_{i}$ for all $2\leq i \leq q_{n}+q_{n+1}-1$.
\be \label{equalities}
\omega'^{(n)}_{i} = \omega^{(n)}_{i}\quad \mathrm{for\, all }\quad  2\leq i \leq
q_{n}+q_{n+1}-1.
\ee
More precisely, %if  $n$ is even, 
%\bes \underline{\omega}^{(n)} = (l \omega^{(n)}_2 \cdots \omega^{(n)}_{q_{n}+q_{n+1}-1}s), \qquad \underline{\omega}'^{(n)} = (s \omega^{(n)}_2 \cdots\omega^{(n)}_{q_{n}+q_{n+1}-1} l), \ees
%$\underline{\omega}^{(n)} = (l \omega^{(n)}_2 \cdots \omega^{(n)}_{q_{n}+q_{n+1}-1}s)$ and $\underline{\omega}'^{(n)} = (s \omega^{(n)}_2 \cdots\omega^{(n)}_{q_{n}+q_{n+1}-1} l)$,
% while   if $n$ is odd
%\bes \underline{\omega}^{(n)} = (s \omega^{(n)}_2 \cdots
%\omega^{(n)}_{q_{n}+q_{n+1}-1} l), \qquad \underline{\omega}'^{(n)} = (l \omega^{(n)}_2 \cdots \omega^{(n)}_{q_{n}+q_{n+1}-1} s).\ees 
%LONGER FORMULAS
\begin{eqnarray}&& \mathit{if}\,  n\,  \mathit{is \, even}, \, \, 
\underline{\omega}^{(n)} = (l \omega^{(n)}_2 \cdots \omega^{(n)}_{q_{n}+q_{n+1}-1}
s)\,\, \mathit{and} \,\,  \underline{\omega}'^{(n)} = (s \omega^{(n)}_2 \cdots
\omega^{(n)}_{q_{n}+q_{n+1}-1} l);% \, \, \mathit{if}\,  n\,  \mathit{is \, even};
\nonumber \\ && \mathit{if}\,  n\,  \mathit{is \, odd}, \, \,
\underline{\omega}^{(n)} = (s \omega^{(n)}_2 \cdots
\omega^{(n)}_{q_{n}+q_{n+1}-1} l)\, \, \mathit{and}\,\,  \underline{\omega}'^{(n)} = (l \omega^{(n)}_2
\cdots \omega^{(n)}_{q_{n}+q_{n+1}-1} s). \nonumber
\end{eqnarray}
%\begin{itemize}
%\maketitle\item[-] if  $n$ is even,
%$\underline{\omega}^{(n)} = (l \omega^{(n)}_2 \cdots \omega^{(n)}_{q_{n}+q_{n+1}-1}
%s), \quad \underline{\omega}'^{(n)} = (s \omega^{(n)}_2 \cdots
%\omega^{(n)}_{q_{n}+q_{n+1}-1} l)$;% if  $n$ is even;
%\maketitle\item[-] if $n$ is odd, $\underline{\omega}^{(n)} = (s \omega^{(n)}_2 \cdots
%\omega^{(n)}_{q_{n}+q_{n+1}-1} l), \quad \underline{\omega}'^{(n)} = (l \omega^{(n)}_2
%\cdots \omega^{(n)}_{q_{n}+q_{n+1}-1} s)$;
%        \end{itemize}
Moreover, for $n\geq 4$, $\omega^{(n)}_{2}={\omega}'^{(n)}_2= \omega^{(n)}_{q_{n}+q_{n+1}-1}= {\omega}'^{(n)}_{q_{n}+q_{n+1}-1}=l$.
\begin{comment}
 if $n$ is even, $\underline{\omega}^{(n)} = (l \omega^{(n)}_2 \cdots
\omega^{(n)}_{q_{n}+q_{n+1}-1} s)$ and $\underline{\omega}'^{(n)} = (s \omega^{(n)}_2
\cdots \omega^{(n)}_{q_{n}+q_{n+1}-1} l)$, while if $n$ is odd,
$\underline{\omega}^{(n)} = (s \omega^{(n)}_2 \cdots \omega^{(n)}_{q_{n}+q_{n+1}-1}
l)$ and $\underline{\omega}'^{(n)} = (l \omega^{(n)}_2 \cdots
\omega^{(n)}_{q_{n}+q_{n+1}-1} s)$.
\end{comment}
%\bes
%\underline{\omega'}^{(n)} = (s \omega_2 \cdots \omega_{q_{n}+q_{n+1}-1} l)
%\ees
%i.e.~$\omega'_{i} = \omega_{i}$ for all $1\leq i \leq q_{n}+q_{n+1}-1$, but the first
%and the last, which are different.
%Similarly, if $n$ is odd, $\underline{\omega}^{(n)} = (s \omega_2 \cdots\omega_{q_{n}+q_{n+1}-1} l)$ and
%\bes
%\underline{\omega'}^{(n)} = (l \omega_2 \cdots \omega_{q_{n}+q_{n+1}-1} s)
%\ees
%i.e.~$\omega'_{i} = \omega_{i}$ for all $1\leq i \leq q_{n}+q_{n+1}-1$, but the firstand the last.%, which are different.
\end{lemma}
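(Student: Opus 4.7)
I would prove the lemma by induction on $n$, recasting the recursive description of $\xi^{(n)}$ given in \S\ref{partitionssec} as a substitution rule on the symbolic strings. Writing $a := a_{n+2}$, define for $n$ even the substitution
\begin{equation*}
\sigma_e \colon \quad s \longmapsto l, \qquad l \longmapsto s\, l^{a},
\end{equation*}
and for $n$ odd the substitution $\sigma_o \colon s \mapsto l,\ l \mapsto l^{a} s$. The recursive construction then yields $\underline{\omega}^{(n+1)} = \sigma(\underline{\omega}^{(n)})$ read letter by letter (with $\sigma = \sigma_e$ or $\sigma_o$ according to the parity of $n$): each type-$s$ interval persists unchanged (but reclassified as type $l$), and each type-$l$ interval splits into one type-$s$ remainder flanked by $a$ type-$l$ pieces, the remainder on the left if $n$ is even and on the right if $n$ is odd.

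\textbf{Key identities.} The induction is driven by two algebraic identities. First, the \emph{commutation identity}
\begin{equation*}
l^{a}\, \sigma_e(w) \;=\; \sigma_o(w)\, l^{a}, \qquad w\in\{s,l\},
\end{equation*}
verified directly on each letter ($l^{a} s l^{a}$ in both cases for $w=l$, and $l^{a+1}$ in both cases for $w=s$) and extending multiplicatively to $l^{a}\sigma_e(W) = \sigma_o(W)\, l^{a}$ for any word $W$. Second, the \emph{reversal identity} $\mathrm{rev}(\sigma_e(W)) = \sigma_o(\mathrm{rev}(W))$, immediate from $\mathrm{rev}(s l^{a}) = l^{a} s$ and $\mathrm{rev}(AB) = \mathrm{rev}(B)\,\mathrm{rev}(A)$.

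\textbf{Inductive step.} Suppose the lemma holds at an even level $n$, so $\underline{\omega}^{(n)} = l\, M\, s$ with palindromic middle $M$. Applying $\sigma_e$,
\begin{equation*}
\underline{\omega}^{(n+1)} \;=\; s\, l^{a}\, \sigma_e(M)\, l,
\end{equation*}
while, using the reversal identity and palindromicity of $M$,
\begin{equation*}
\mathrm{rev}(\underline{\omega}^{(n+1)}) \;=\; l\, \sigma_o(M)\, l^{a}\, s.
\end{equation*}
By the commutation identity one has $l^{a}\sigma_e(M) = \sigma_o(M)\,l^{a}$, so removing the first and last letter of each of these two words yields the same string $\sigma_o(M)\,l^{a}$; this is exactly the almost-symmetry at level $n+1$, and the first/last letters are $s$ and $l$ as required for $n+1$ odd. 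The odd-to-even step is strictly symmetric, using $\sigma_o(W)\, l^{a} = l^{a}\, \sigma_e(W)$. For the base case one checks directly that $\underline{\omega}^{(0)} = l^{a_1} s$, whose middle is trivially palindromic.

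\textbf{Endpoint claim and main obstacle.} For $n\geq 4$ the second letter of $\underline{\omega}^{(n)}$ is visibly $l$ from the explicit form $s\, l^{a}\, \sigma_e(M)\, l$ (or its odd analogue $l\, \sigma_o(M)\, l^{a}\, s$), since $a = a_{n+1}\geq 1$ and $\sigma_o$ always begins with $l$; the penultimate letter is then $l$ by palindromicity of the middle. The hypothesis $n\geq 4$ just guarantees the string is long enough that positions $2$ and $q_n+q_{n+1}-1$ are distinct and interior. The argument is entirely combinatorial once the substitution is set up; the one piece of genuine care needed is the bookkeeping of parities and of the leading/trailing letters through each substitution, which the commutation identity $l^{a}\sigma_e = \sigma_o\, l^{a}$ makes essentially automatic.
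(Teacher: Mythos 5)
Your proof is correct, and it reaches the same conclusion by a genuinely cleaner route than the paper's. Both arguments are inductions that encode the refinement $\xi^{(n)}\to\xi^{(n+1)}$ as a substitution on $\{l,s\}$-strings (and you have the index right: the parameter is $a_{n+2}$, whereas the paper's proof writes $a_{n+1}$, apparently a typo, since the recursive description in \S\ref{partitionssec} gives $a_{n+2}$). The difference is in how the inductive step is closed. The paper shows the almost-symmetry of $\underline{\omega}^{(n+1)}$ by bookkeeping the positions of the letter $s$ explicitly: it introduces the counting functions $l(i)$ and $l'(i)$, derives that $\omega^{(n+1)}_j=s$ iff $\omega^{(n)}_i=l$ and $j=i+a\,l(i)$, and matches these positions against $j'=i+a(l'(i)+1)$ using the inductive hypothesis $l'(i)=l(i)-1$. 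You instead package the same combinatorics into two algebraic facts about the substitutions $\sigma_e$ and $\sigma_o$ — the commutation identity $l^{a}\sigma_e(W)=\sigma_o(W)l^{a}$ and the reversal identity $\mathrm{rev}\circ\sigma_e=\sigma_o\circ\mathrm{rev}$ — from which the palindromicity of the middle block, $\underline{\omega}^{(n+1)}=s\,l^{a}\sigma_e(M)\,l$ versus $\mathrm{rev}(\underline{\omega}^{(n+1)})=l\,\sigma_o(M)\,l^{a}\,s$, drops out in one line. This buys a shorter and less error-prone inductive step, makes the parity bookkeeping automatic, and isolates the structural reason the lemma holds (a commutation relation in the substitution monoid) rather than hiding it inside index manipulations; the paper's counting argument, on the other hand, is more directly tied to the geometric picture of which $\Delta^{(n+1)}_j$ lands where. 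Your treatment of the endpoint claim for $n\geq 4$ is also fine; in fact the explicit form $s\,l^{a}\sigma_e(M)\,l$ together with $a\geq 1$ already forces $\omega^{(n)}_2=l$ for $n\geq 2$, so the hypothesis $n\geq 4$ in the statement is merely conservative, as you observe.
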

\begin{proof}

The proof proceeds by induction on $n$. For $n=0$, $\underline{\omega}^{(0)}=
(ll\cdots ls)$ where the number of occurrences of $l$ is given by $a_1$. Hence,
$\underline{\omega}'^{(0)}= (sll\cdots l)$ and there is nothing to prove. Assume
that the almost-symmetry is proved for $\xi^{(n)}$ ($\omega'_{i} = \omega_{i}$ for
all $2\leq i \leq q_n+q_{n+1}-1$) and $n$ is even. As it can be seen easily analyzing the recursive
construction of $\xi^{(n)}$ in \S\ref{partitionssec}, the new string
$\underline{\omega}^{(n+1)}$ is obtained from   $\underline{\omega}^{(n)}$ by
substituting  each letter $s$ with $l$ (since $\lambda^{(n+1)}$ which was the
shortest length in $\xi^{(n)}$ is now the longest one in $\xi^{(n+1)}$) and
substituting each letter $l$ with $sll\dots l$ where the number of occurrences
$l$ is given by $a_{n+1}$, see for example Figure \ref{3refinements}.
 \begin{figure}
\centering
\includegraphics[width=0.9\textwidth]{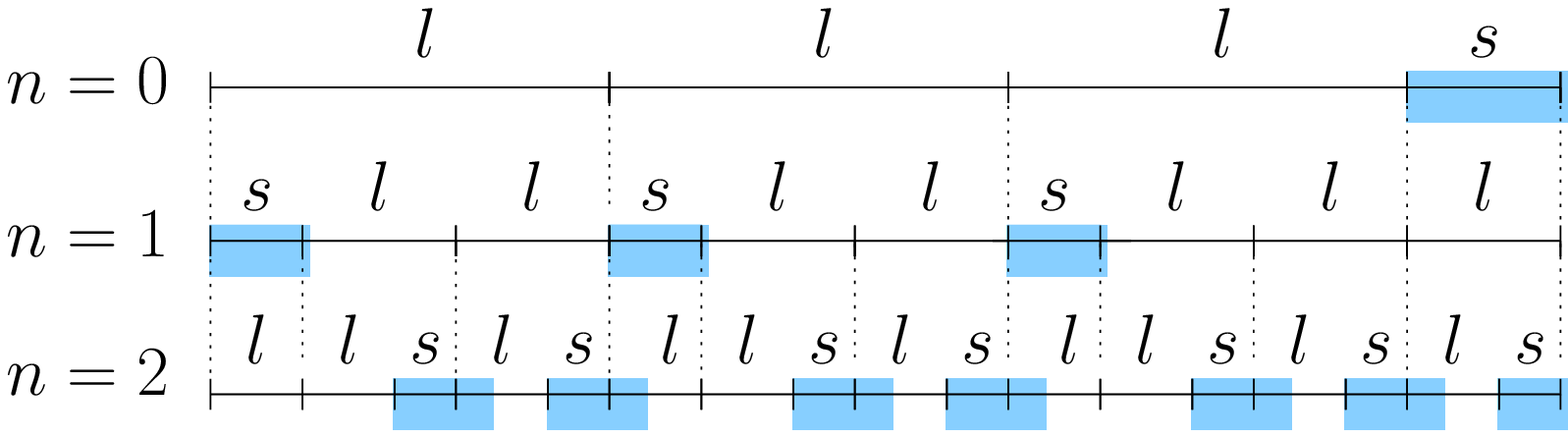}
\caption{An example of partitions $\xi^{(0)}$,  $\xi^{(1)}$,  $\xi^{(1)}$ (where $a_1=3$, $a_2=2$, $a_3=1$).\label{3refinements}}
\end{figure}

To verify the desired identities on the letters in $\underline{\omega}^{(n+1)}$ and
$\underline{\omega}'^{(n+1)}$ it is enough to verify that the letters $s$ occur
in the same positions (with the exception the first and last letter of the string). Let
$l(i)$ denote the number of letters $l$ among $\omega^{(n)}_j$ with $0\leq j <
i$ (i.e.~ the cardinality of $\omega^{(n)}_j=l$ with  $0\leq j < i$). Since all
$s$ in  $\underline{\omega}^{(n)}$ become $l$, the only $s$ in the string
$\underline{\omega}^{(n+1)}$ appear inside each block $sll\dots l$. Moreover,
each occurrence of $l$ in $\underline{\omega}^{(n)}$ generates a string of
length $a_{n+1}+1$ in $\underline{\omega}^{(n+1)}$. Hence ${\omega}_j^{(n+1)}= s$
iff ${\omega}_i^{(n)}= l$ and $j= i+  a_{n+1}l(i)$ (for $1\leq i \leq q_{n}+q_{n+1}$).

Similarly the string $\underline{\omega}'^{(n+1)}$ is obtained from
$\underline{\omega}'^{(n)}$ by substituting $s$ with $l$ and substituting each
symbol $l$ with $ll\dots ls$ ($a_{n+1}$ copies of $l$). If $l'(i)$ denote the
number of letters $l$ (i.e.~$\omega'^{(n)}_j=l$) among $\omega'^{(n)}_j$ with
$0\leq j < i$, then ${\omega'}_{j'}^{(n)}=s $ iff ${\omega}_i^{(n)}= l$ and 
$j'= i+ a_{n+1}(l'(i)+1)$ (for $i=1,\dots, q_{n}+q_{n+1}$). By the inductive
assumption, since $\omega^{(n)}_i$ and $\omega'^{(n)}_i$  coincide for all
$i\neq 1, i\neq q_{n}+q_{n+1}$ but $\omega'^{(n)}_1=s$, we have $l'(i) = l(i)-1$.
Hence, for $2\leq i \leq q_{n-1}+q_{n-2}-1$, we have ${\omega}_i^{(n)}= l$ iff
${\omega'}_i^{(n)}= l$ and
 $j'= i+ a_{n+1}(l'(i)+1) = i+ a_{n+1}l(i) = j $, which implies, as we wanted,
that ${\omega'}_{j'}^{(n+1)}=s $ iff   ${\omega}_j^{(n+1)}= s$. The proof for
 odd $n$ is analogous.%, it is enough to consider the substitution of $s$ with$lllls$ and check occurrences of $s$.

From the definition of $\Delta^{(n)}$ we have immediately that $\omega^{(n)}_1=l$, $\omega^{(n)}_{q_{n-1}+q_{n-2}}=s$  for $n$ even and $\omega^{(n)}_1=s$, $\omega^{(n)}_{q_{n-1}+q_{n-2}}=l$ for $n$ odd and the last equalities follow from the (\ref{equalities}) and the fact that 
 two $s$ are never nearby.
\end{proof}
%The proof of Lemma \ref{translatesz} follows immediately from Lemma\ref{almostsymmetrylemma}.
\begin{proofof}{Lemma}{translatesz}
Assume $n$ is even. % the first interval of the partition $\xi^{(n)}$ is of type
%$l$ and
The points $z_i^{(n)}$ and ${z'}_i^{(n)}$  for $0\leq i< q_{n+1}$ are
middle points of intervals of type $l$ respectively before and after the
reflection. Let us first prove (\ref{translatesn}) for $i=0$. 
 The first interval of the partition $\xi^{(n)}$ is of type
$l$ and hence contains $z_0^{(n)}$, while ${z'}_0^{(n)}$ belongs to the second
interval after the reflection, since $\omega'^{(n)}_0=s$. Unless
the string has the length $2$ and is $ls$ (in which case there is nothing to prove),  by Lemma \ref{almostsymmetrylemma}
also  $\omega^{(n)}_1=l$, so the
first two intervals are both of type $l$ and  we have $ {z'}_0^{(n)} = {z}_1^{(n)}
- \lambda^{(n)} + \lambda^{(n+1)}$. Moreover, since by Lemma
\ref{almostsymmetrylemma}, the strings $\underline{\omega}^{(n)}$ and
$\underline{\omega}'^{(n)}$ coincide after the first element, also $
{z'}_i^{(n)} = {z}_{i+1}^{(n)} - \lambda^{(n)} + \lambda^{(n+1)}$ for all
$i=0,\dots, q_{n+1}-2 $ (see Figure \ref{zdisplacement}).

Similarly, the points $z_i^{(n+1)}$ and ${z'}_i^{(n+1)}$  for $0\leq i< q_{n}$
are middle points of intervals of type $s$. In this case, ${z'}_0^{(n+1)}$
belongs to the first interval after the reflection ($\omega'^{(n)}_0=s$) and has
to be kept aside, while $z_0^{(n+1)}$ and ${z'}_1^{(n+1)}$ belong respectively to the
$(a_{n+2}+1)^{th}$ interval before reflection and to the $(a_{n+2}+1)^{th}$ after reflection. Since the strings
are ${\omega}^{(n)}_{0}{\omega}^{(n)}_{1}\cdots {\omega}^{(n)}_{a_{n+1}} =
ll\cdots l s$ and ${\omega'}^{(n)}_{0}{\omega'}^{(n)}_{1}\cdots
{\omega'}^{(n)}_{a_{n+1}+2} =sl\cdots ls$ respectively, ${z'}_1^{(n+1)}=
z_0^{(n+1)}  -(a_{n+2}+1) \lambda^{(n)} +a_{n+2} \lambda^{(n)} + \lambda^{(n+1)} = z_0^{(n+1)}  - \lambda^{(n)} + \lambda^{(n+1)}$ and, again by  Lemma
\ref{almostsymmetrylemma}, since the strings then coincide, also  $
{z'}_{i}^{(n+1)} = {z}_{i-1}^{(n)} - \lambda^{(n)} + \lambda^{(n+1)}$ for all
$i=1,\dots, q_{n}-1 $ (see again Figure \ref{zdisplacement}).
\end{proofof}

\subsection{Cancellations.}\label{cancellations}
\begin{figure}
\centering
\includegraphics[width=0.9\textwidth]{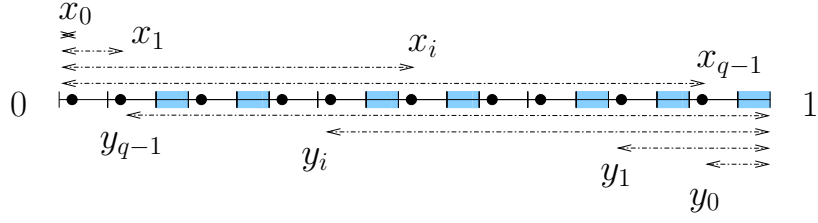}
\caption{The distances $x_i$ and $y_i$, $0\leq i < q$, from $0$ and $1$ respectively ($n$ even).\label{xandy}}
\end{figure}
Let $n$ be even, ${x}\in \Delta{(n)}$ and let $q=q_{n}$ or $q_{n+1}$ according to whether
${x}\in \Delta^{(n+1)}$ or ${x}\in \Delta^{(n)}$.  Consider the orbit cycle $\{R_{\alpha}^i {x}
\st i=0, \dots, q-1\}$, which is an orbit along a tower of $\xi^{(n)}$. Let us rename the points of  $\{R_{\alpha}^i {x}\}_{i=0}^{q-1}$ in
increasing order, so that
\bes 0<{x}_0 <x_1< \dots < x_{q-1}<1, \qquad \bigcup_{i=0}^{q-1} \{ R_{\alpha}^i
{x} \} = \bigcup_{i=0}^{q-1} \{ x_i \}.
\ees
%$0<x_0<x_1< \dots < x_{r-1}<1$ and  $ \bigcup_{i=0}^{r-1} \{ T^i x_0 \} =\bigcup_{i=0}^{r-1} \{ x_i \}$.
Similarly rearrange in increasing order distances from $1$, i.~e.~the elements of
$\{1-R_{\alpha}^i {x}\}_{i=0}^{q-1}$, % 0\leq i \leq {q-1}\}$
 renaming them by
%$0<y_0 < y_1 < \dots < y_{r-1}<1$,   $ \bigcup_{i=0}^{r-1} \{ 1- T^i x_0 \} =\bigcup_{i=0}^{r-1} \{ y_i \}$.
\bes 0<y_0 < y_1 < \dots < y_{q-1}<1,   \qquad \bigcup_{i=0}^{q-1} \{ 1-
R_{\alpha}^i {x} \} = \bigcup_{i=0}^{q-1} \{ y_i \}.
\ees
From the structure of the partitions described in the second part of Lemma \ref{almostsymmetrylemma}, one can easily check the following (see also Figure \ref{displacementfig}).
\begin{rem} \label{firstlastpoints}
If $x\in \Delta^{(n)}$, $x=x_0$ and $y_0= \lambda^{(n+1)} +\lambda^{(n)} - x_0$,
while if $x\in \Delta^{(n+1)}$, $y_0=1-x$ and $x_0= \lambda^{(n)}a_{n+1}
+\lambda^{(n+1)} - y_0$.
\end{rem}
For the other points, from the partition almost-symmetry expressed by Lemma
\ref{translatesz}, we have the following (see an illustration in Figure \ref{xandy}).
\begin{cor}\label{translatesxjyj}
For all $1\leq j \leq q$
%for all $x_j \notin [0, \lambda^{(n_{k_0}-K_1)})$, for all
\begin{eqnarray}
y_i  - x_{i+1} &=&  \lambda^{(n+1)} -  2 x_0, \qquad  i=0,\dots, q_{n+1}-2 ,
\qquad  \mathrm{if} \, x\,  \in \Delta^{(n)};\label{differencen}\\
y_i  - x_{i-1} &=&  2 y_0 -  \lambda^{(n)}
%2  \lambda^{(n)} -  \lambda^{(n+1)} -2 x_0
 ,  \qquad  i=1, \dots,  q_{n}-1 \qquad \mathrm{if}\,  x \in\,
\Delta^{(n+1)}.\label{differencen+1}
\end{eqnarray}
\end{cor}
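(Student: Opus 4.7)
The plan is to express the cycle points $x_i$ and $y_i$ as rigid translates of the middle points $z^{(n)}_i, z'^{(n)}_i$ (respectively $z^{(n+1)}_i, z'^{(n+1)}_i$) from Lemma \ref{translatesz}, and then obtain the identities by direct substitution.

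Consider first $x \in \Delta^{(n)}$. The cycle $\{R_\alpha^j x\}_{j=0}^{q_{n+1}-1}$ visits each floor $\Delta^{(n)}_j$ of the tower $Z^{(n)}_l$ exactly once, and since $R_\alpha^j$ acts on $\Delta^{(n)}_0$ as a pure translation onto $\Delta^{(n)}_j$, every cycle point has the same offset $x_0$ from the left endpoint of its floor. Denote by $l_0 < l_1 < \cdots < l_{q_{n+1}-1}$ the left endpoints of the type-$l$ intervals of $\xi^{(n)}$ in increasing order; then $x_i = l_i + x_0 = z^{(n)}_i + x_0 - \lambda^{(n)}/2$. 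For the reflected points, $1-x_i$ sits inside the reflection of $[l_i, l_i + \lambda^{(n)}]$ at the common offset $\lambda^{(n)} - x_0$ from its left endpoint; since this offset is the same for every $i$, sorting $\{1-x_j\}$ in increasing order matches the sorting of the reflected type-$l$ intervals by left endpoint, so $y_i = l'_i + \lambda^{(n)} - x_0 = z'^{(n)}_i + \lambda^{(n)}/2 - x_0$, where $l'_0 < \cdots < l'_{q_{n+1}-1}$ are the sorted left endpoints of the type-$l$ intervals in the reflected partition. Substituting the identity $z'^{(n)}_i = z^{(n)}_{i+1} + \lambda^{(n+1)} - \lambda^{(n)}$ of Lemma \ref{translatesz} then yields
\[
y_i - x_{i+1} = (z'^{(n)}_i - z^{(n)}_{i+1}) + \lambda^{(n)} - 2 x_0 = \lambda^{(n+1)} - 2 x_0,
\]
which is (\ref{differencen}).

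The case $x \in \Delta^{(n+1)}$ is entirely parallel, with the tower $Z^{(n)}_s$ replacing $Z^{(n)}_l$, type-$s$ intervals replacing type-$l$ ones, and the common offset $x_0$ replaced by $\lambda^{(n+1)} - y_0$ (the distance from $x$ to the left endpoint of $\Delta^{(n+1)} = [1-\lambda^{(n+1)},1)$, using that $y_0 = 1-x$ in this case by Remark \ref{firstlastpoints}). The analogous computation combined with the second identity $z'^{(n+1)}_i = z^{(n+1)}_{i-1} + \lambda^{(n+1)} - \lambda^{(n)}$ of Lemma \ref{translatesz} then produces $y_i - x_{i-1} = 2 y_0 - \lambda^{(n)}$, which is (\ref{differencen+1}). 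The main place where care is required is the bookkeeping of indices: Lemma \ref{translatesz} excludes precisely the two extreme middle points $z^{(n)}_0$ and $z^{(n+1)}_{q_n - 1}$, and this is reflected in the omission of the extreme cycle points $x_0$ and $y_0$ from the index ranges of the corollary, these being the points handled directly by Remark \ref{firstlastpoints}.
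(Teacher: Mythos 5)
Your argument is correct and follows the same route as the paper: you express each $x_i$ and $y_i$ as a rigid translate of the corresponding middle point $z^{(n)}_i$ or $z'^{(n)}_i$ (resp.\ $z^{(n+1)}_i$, $z'^{(n+1)}_i$), with offset determined by $x_0$ (resp.\ $y_0$), and then substitute the rigid-translate identity of Lemma \ref{translatesz}. The only difference is that you spell out the $x\in\Delta^{(n+1)}$ case and the index bookkeeping explicitly, whereas the paper leaves that case as ``analogous.''
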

\begin{figure}
\begin{center}
\subfigure[$x\in \Delta^{(n)}$, $q=q_{n+1}$]{\label{xdisplacementn}
\includegraphics[width=0.89\textwidth]{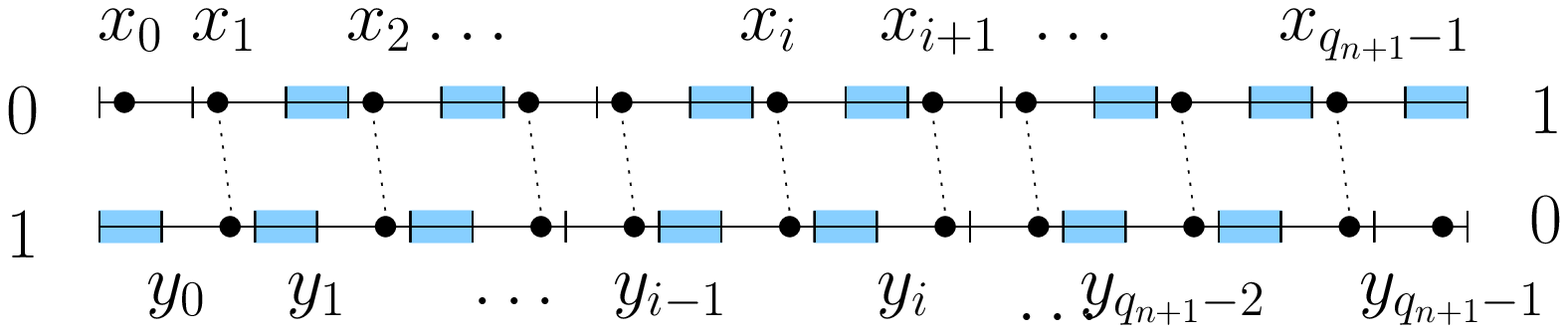}}
\end{center}
%\hspace{5mm}
\begin{center}
\subfigure[$x\in \Delta^{(n+1)}$, $q=q_{n}$]{\label{xdisplacementn+1}
\includegraphics[width=0.80\textwidth]{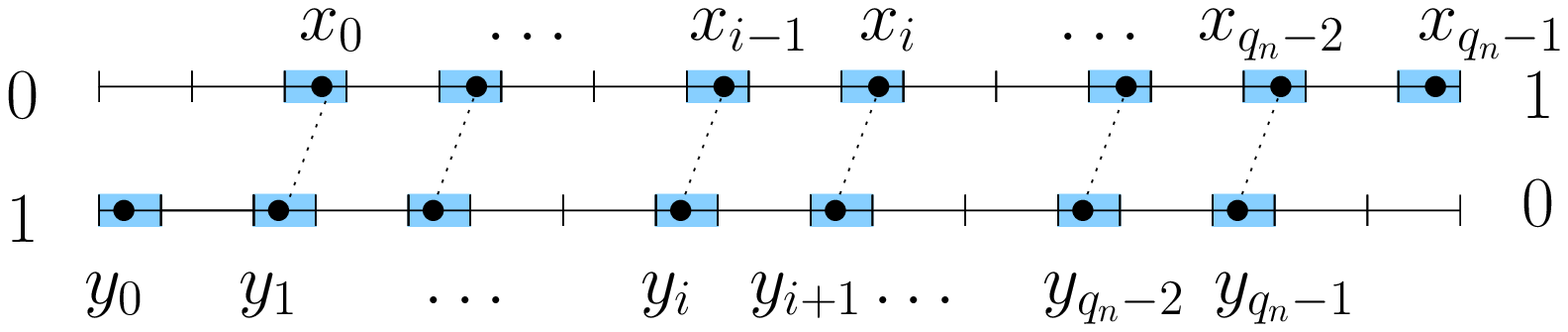}}
\end{center}
\caption{\label{displacementfig}An example of the relations (\ref{differencen}, \ref{differencen+1}) between $x_i$ and $y_i$% in (\ref{differencen}, \ref{differencen+1})
%(\ref{translatesn}, \ref{translatesn+1})
.}
\end{figure}
\begin{proof}
Assume that ${x}\in \Delta^{(n)}$ (see Figure \ref{xdisplacementn}).
Since, for some $0\leq k < q_{n+1}$, $x_i$ and $z_i^{(n)}$ both belong to the same $\Delta^{(n)}_k$, which is a
rigid translate of $\Delta^{(n)}_0$, % of $x_0$ and $z_0^{(n)}$,
  we have $x_i - z_i^{(n)} = x_0 - z_0^{(n)} = x_0 -  \lambda^{(n)}/2$.
Similarly, both $y_{j}$ and ${z'}_{j}^{(n)}$ belong to the same
$\Delta^{(n)}_{k'}$, which is a rigid translate and a reflection of
$\Delta^{(n)}_0$, hence
%after the reflection, we have
 $y_{j} - {z'}_{j}^{(n)} =  \lambda^{(n)}/2 - x_0$. Thus,  $y_{j}-x_i =
{z'}_{j}^{(n)} -  z_{i}^{(n)} +   \lambda^{(n)} - 2 x_0$. Using this relation,
(\ref{differencen}) follows from (\ref{translatesn}) in Lemma \ref{translatesz}.
The argument to prove (\ref{differencen+1}) when ${x}\in \Delta^{(n+1)}$ is
analogous (see Figure \ref{xdisplacementn+1}) and reduces to (\ref{translatesn+1}) in Lemma \ref{translatesz}.
\end{proof}
We remark that the points of $\{R_{\alpha}^i {x}, \, i=0, \dots, q-1\}$
belong to  different floors of the tower of the partition  $\xi^{(n)}$ and are
in the same relative position inside them. 
%Moreover two  floors of type $s$ have always $a_{n+1}$ floor of type $l$ in between, by Remark \ref{separation}.
Hence, we have the following.
%in particular, the minimum distance $\min_{i\neq j} |x_i -x_j|$ is bounded
%below by $\lambda^{(n)}$.% or $\lambda^{(n+1)}$, according to whether $x_0 \in\Delta^{(n)}$ or  $x_0 \in \Delta^{(n+1)}$ respectively.
\begin{rem}\label{minimumdistance}
%Remark that each of the points of $\{R_{\alpha}^i x_0, \, i=0, \dots, q-1\}$
%belongs to a different floors of a tower of the partition  $\xi^{(n)}$ and are
%in the same relative position inside them. Moreover that two  floors of type $s$
%have always a floor of type $l$ in between. Hence, in particular,
The minimum distance $\min_{i\neq j} |x_i -x_j|$ is bounded below by
$\lambda^{(n)}$. In particular, $x_j \geq j\lambda^{(n)}$ and similarly $y_j \geq j\lambda^{(n)}$ for $1\leq j \leq q_{n+1}-1$. 
\end{rem}
Moreover,  by Remark \ref{separation}, two  floors of type $s$
have always $a_{n+1}$ floor of type $l$ in between them and since, if $x_0 \in \Delta^{(n+1)}$, all points $x_i$ with $0\leq i \leq q_{n}-1$, belong to different floors of type $s$, we also have the following.
\begin{rem}\label{minimumdistancesmall}
If $x_0\in \Delta^{(n+1)}$, $x_j \geq j a_{n+1}\lambda^{(n)} \geq j \frac{a_{n+1}}{a_{n+1}+1}\lambda^{(n-1)}\geq\frac{j}{2}\lambda^{(n-1)} $ and  $y_j \geq  \frac{j}{2}\lambda^{(n-1)}$, for  $1\leq j \leq q_{n}-1$.
\end{rem}

Applying Corollary \ref{translatesxjyj} and Remark \ref{minimumdistance}, we can
control cancellations through a converging series and prove the following Lemma,
which shows that the main contribution to the sum along a cycle is determined by
the closest visits to $0$ and $1$.
\begin{lemma}\label{truncation}% Let $ \BS{\alpha}{{x, f_1}}{q}$ be a sum along acycle of order $n$.
For each $\epsilon>0$, there exists $k(\epsilon)$ such that for all $k\geq
k(\epsilon)$, if $ \BS{\alpha}{{x, f_1}}{q}$ is a sum along a
cycle of order $n$,
\bes
\left| \frac{1}{q}  \BS{\alpha}{{x, f_1}}{q} -  \frac{1}{q}
\sum_{i=0}^{k}%(\epsilon)}
 \left ( \frac{1}{x_i} - \frac{1}{y_i} \right) \right|  \leq
%%%%ricontrollato non occorre \left(1+\frac{a_{n+1}}{a_{n+2}}\right)
 \epsilon.
\ees
\end{lemma}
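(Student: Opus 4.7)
The plan is to expand $f_1$ using condition $(ii)'$ and then exploit the rigid translation between the sequences $\{x_i\}$ and $\{y_i\}$ furnished by Corollary \ref{translatesxjyj}. Since $f_1(x)=c/x-c/(1-x)$ and the sets $\{R^i_\alpha x\}_{i=0}^{q-1}$, $\{1-R^i_\alpha x\}_{i=0}^{q-1}$ coincide after rearrangement with $\{x_i\}$ and $\{y_i\}$ respectively,
\bes
\BS{\alpha}{x,f_1}{q} = c\sum_{i=0}^{q-1}\frac{1}{x_i}-c\sum_{i=0}^{q-1}\frac{1}{y_i},
\ees
so the quantity to bound is, up to the factor $c/q$, the difference of two tails $\sum_{i=k+1}^{q-1}1/x_i-\sum_{i=k+1}^{q-1}1/y_i$. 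Neither sum is individually small, each being of size $\log q$, so the strategy is to pair the two up so that the logarithmic divergences cancel.

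For the main estimate, suppose first $x\in\Delta^{(n)}$ and $q=q_{n+1}$, and set $C:=\lambda^{(n+1)}-2x_0$, so that by (\ref{differencen}) we have $y_i=x_{i+1}+C$ for $0\leq i\leq q-2$, with $|C|\leq 2\lambda^{(n)}$. Shifting the index in the $y$-sum yields
\bes
\sum_{i=k+1}^{q-1}\frac{1}{x_i}-\sum_{i=k+1}^{q-1}\frac{1}{y_i}=\frac{1}{x_{k+1}}-\frac{1}{y_{q-1}}+\sum_{i=k+2}^{q-1}\frac{C}{x_i(x_i+C)}.
\ees
By Remark \ref{minimumdistance}, $x_i\geq i\lambda^{(n)}$, and since $x_i+C=y_{i-1}\geq(i-1)\lambda^{(n)}$, each summand on the right has absolute value at most $2/(i(i-1)\lambda^{(n)})$. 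Summing over $i\geq k+2$ bounds the telescoping piece by $2/((k+1)\lambda^{(n)})$, while the two boundary terms are $\leq 1/((k+1)\lambda^{(n)})$ and $\leq 1/((q-1)\lambda^{(n)})$ respectively.

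Dividing by $q=q_{n+1}$ and using the lower bound $q_{n+1}\lambda^{(n)}\geq 1/2$ supplied by (\ref{towersareas}), each of these three contributions becomes $O(1/k)$ uniformly in $n$ and $\alpha$; choosing $k(\epsilon)$ so large that $c$ times the total implied constant is less than $\epsilon$ closes this case. The case $x\in\Delta^{(n+1)}$, $q=q_n$ is entirely analogous: one pairs $y_i$ with $x_{i-1}$ via (\ref{differencen+1}) with $C':=2y_0-\lambda^{(n)}$, and invokes Remark \ref{minimumdistancesmall} (which gives $x_j,y_j\geq(j/2)\lambda^{(n-1)}$) together with $q_n\lambda^{(n-1)}\geq 1/2$.

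The one genuinely delicate step, and the main obstacle I anticipate, is controlling the isolated unpaired term $1/y_{q-1}$ (and its analogue in the other case). A priori it could be as large as $1/\lambda^{(n)}\sim q$, which would destroy the whole estimate; it is precisely the lower bound $y_{q-1}\geq(q-1)\lambda^{(n)}$ from Remark \ref{minimumdistance}, combined with the tower-area inequality $q\lambda^{(n)}\asymp 1$, that ensures this term becomes $O(1/q)$ after division by $q$. Once this point is handled, the remainder is a straightforward telescoping combined with a convergent harmonic-type series estimate.
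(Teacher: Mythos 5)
Your proof is correct and follows essentially the same strategy as the paper's: after reducing to the tail sum, both pair consecutive $x$'s and $y$'s via the rigid-translation relation of Corollary~\ref{translatesxjyj}, bound the resulting summands through Remark~\ref{minimumdistance} (resp.\ Remark~\ref{minimumdistancesmall}) and the tower-area estimate (\ref{towersareas}), and control the unpaired boundary terms the same way. The minor algebraic difference in how the boundary terms are grouped (you isolate $1/x_{k+1}$ and $1/y_{q-1}$, while the paper writes $1/y_k - 1/y_{q-1}$) amounts to rebracketing a single term of the series and does not change the argument.
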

\begin{proof}
%Let $x_0 \in \Delta^{(n_{k_0})}_{j_0}$, $r_{k_0} =h^{(n_{k_0})}_{j_0}$.
%%%TO COMPLETE CHECK BOUNDED VARIATION%Recall that we are assuming that $f'=v-
%u+g$. Since $g$ has bounded variation, it is in particular bounded by some
%$M_g>0$. Hence, we have $\BS{g}{r_k}\leq M_g r_k$.
%Thus, we can assume from now on that  $f'=v-u$, since if we can prove
%(\ref{boundSf'growth}) for $f'=v-u$, it clearly holds also for $v-u+g$ with
%constant $M+M_g$.
Using the new labeling of the orbit points, introduced at the beginning of
\S\ref{cancellations}, we have % \S\ref{cancelletionssec}, we have
\bes \BS{\alpha}{{x}}{q} = \sum_{i=0}^{q-1} \left(  \frac{1}{R_{\alpha}^i {x}} -
\frac{1}{1-R_{\alpha}^i {x}} \right) = \sum_{i=0}^{q-1} \left(  \frac{1}{x_i} -
\frac{1}{y_i}  %\sum_{i=0}^{r_{k_0}-1}
 \right).
\ees
%Remark that each of the points of $\{T^i x_0\}_{i=0}^{r-1}$ belongs to a
%different floors of the tower  $Z^{(n_{k_0})}_{j_0}$, hence, in particular, the
%minimum distance $\min_{i\neq j} |x_i -x_j|$ is bounded below by
%$\lambda^{(n_{k_0})}_{j_0}$.

Let us apply Corollary \ref{translatesxjyj} to control $\left|y_{i}- x_{i+1}
\right| $ or $\left| x_i -y_{i+1} \right| $ respectively.
%, keeping aside the first and last point, and use the Remark \ref{minimumdistance}.
%on the minimum distance between points, 
 In the case ${x} \in \Delta^{(n)}$, rearranging the terms of the summation to use (\ref{differencen}), we get
\begin{eqnarray}
\label{cycleminusfirstpoints}
 \BS{\alpha}{{x}}{q} - \sum_{i=0}^k \left( \frac{1}{x_{i}}- \frac{1}{y_i} \right) 
 &=& \sum_{i=k}^{q-2} \left( \frac{1}{x_{i+1}} - \frac{1}{y_i} \right) + \frac{1}{y_k} - \frac{1}{y_{q-1}} \\ &=&\label{secondrearrang}
\sum_{i=k}^{q-2} \frac{{\lambda^{(n+1)}} -{2x_0}  }{y_i x_{i+1}} + \frac{1}{y_k} - \frac{1}{y_{q-1}} .
%  \frac{1}{x_0}- \frac{1}{y_{q-1}} +  \sum_{i=0}^{q-2}
%\left( \frac{1}{x_{i+1}} - \frac{1}{y_i} \right)
% =  \frac{1}{x_0}- \frac{1}{y_{q-1}} +
% \sum_{i=0}^{q-2}  \frac{{\lambda^{(n+1)}} -{2x_0}  }{y_i x_{i+1}}.
 \end{eqnarray}
\begin{comment}
\bes%\begin{split}
% \frac{1}{q} \sum_{i=0}^{q-1} \left ( \frac{1}{x_i} - \frac{1}{y_i}\right)
%\frac{1}{q}
 \left| \BS{\alpha}{{x}}{q} \right|% &
=
%\frac{1}{q}
 \left|  \frac{1}{x_0}- \frac{1}{y_{q-1}} +  \sum_{i=0}^{q-2} \frac{y_i -
x_{i+1}}{y_i x_{i+1}}\right|
%\left( \frac{1}{x_{i+1} - \frac{1}{y_i} \right)
 \leq   \frac{1}{x_0}+ \frac{1}{y_{q-1}} + \left|
 \sum_{i=0}^{q-2}  \frac{{\lambda^{(n+1)}} -{2x_0}  }{{( i \lambda^{(n)}
)}^2}\right|
 \ees
\end{comment}
From (\ref{secondrearrang}), using % (\ref{differencen}) and
 Remark \ref{minimumdistance}
 which gives $x_i, y_i \geq i \lambda^{(n)}$, we have, as long as $k\geq 1$, 
\bes
%\begin{split}
\left| \frac{1}{q} \BS{\alpha}{{x}}{q}  - \frac{1}{q}\sum_{i=0}^{k} \left(
\frac{1}{x_i} - \frac{1}{y_i} \right) \right|
\leq
%\left|\frac{1}{q}   \sum_{i=k+1}^{q-2}  \frac{{\lambda^{(n+1)}} -{2x_0}  }{{( i
%\lambda^{(n)} )}^2}
% \right|\leq
\left|\frac{1}{q%_{n+1}
 \lambda^{(n)} }  \sum_{i=k}^{q-2}  \frac{ \frac{\lambda^{(n+1)}}{ \lambda^{(n)}
} -\frac{2x_0} {\lambda^{(n)} }}{i^2}\right| + \frac{1}{q y_k}.
 \ees
The second term in the RHS, by  Remark \ref{minimumdistance} and (\ref{towersareas}), is bounded by  $\frac{1}{q y_k} \leq$ ${(k q_{n+1} \lambda^{(n)})}^{-1} \leq$ $\frac{ 2}{k} $ and hence by ${\epsilon}/{2}$ if $k\geq k(\epsilon)$ where $k(\epsilon ) $ is large enough.  
Moreover, since $(q\lambda^{(n)})^{-1} = (q_{n+1} \lambda^{(n)})^{-1}\leq 2$ by (\ref{towersareas}), 
$\lambda^{(n+1)}/ \lambda^{(n)}\leq 1$ and $x_0/\lambda^{(n)}\leq 1$,
 the first term in the RHS is bounded by the remainder of a converging series. Hence, we can
choose $k(\epsilon)$ large enough so that also  $ 6 \sum_{k(\epsilon)}^{\infty} i^{-2} <
\epsilon/2$ and this concludes the proof in the case ${x}\in \Delta^{(n)}$.
In the case  ${x}\in \Delta^{(n+1)}$ and $q=q_n$, in an  analogous way we get
\bes
 \BS{\alpha}{{x}}{q} = \sum_{i=0}^k \left( \frac{1}{x_{i}}- \frac{1}{y_i} \right) 
+ \sum_{i=k+1}^{q-1} \left( \frac{1}{x_{i-1}} - \frac{1}{y_i} \right) - \frac{1}{x_{k}} + \frac{1}{x_{q-1}}
\ees
and this time by   (\ref{differencen+1}) and Remark \ref{minimumdistancesmall} and also (\ref{towersareas}) and $y_0\leq \lambda^{(n+1)} $, we have
\bes
\left|  \frac{1}{q}\sum_{i=k+1}^{q-1} \left( \frac{1}{x_{i-1}} - \frac{1}{y_i}
\right) \right| \leq
\left|\frac{4}{q}
  \sum_{i=k+1}^{q-1}  \frac{2  y_0 -  \lambda^{(n)} }{(\lambda^{(n-1)})^2
i^2}\right|\leq \frac{12}{q_n \lambda^{(n-1)}
} \sum_{i=k+1}^{q-1}  \frac{ \frac{\lambda^{(n)}}{\lambda^{(n-1)} } }{ i^2}\leq
\sum_{i=k+1}^{q-1}  \frac{24}{ i^{2}}.
 \ees
Moreover, using again Remark \ref{minimumdistancesmall},  one has $\frac{1}{q}\left|\frac{1}{x_{q-1}}- \frac{1}{x_{k}} \right| \leq \frac{1}{q x_k} \leq\frac{2}{ k\, q_{n}\lambda^{(n-1)}}\leq \frac{4}{k}$ so by choosing $k(\epsilon)$ large enough this concludes the proof also in this second case.
\end{proof}
\begin{cor}\label{cycleestimatelemma}  There exists $M>0$ such that for all sums
$\BS{\alpha}{{x}}{q}$ along a cycle
%From the proof of Lemma \ref{truncation} (in particular from (\ref{}), we also
we get%
\be\label{estimatecycle}
\left| \frac{1}{q} \BS{\alpha}{{x}}{q}\right| \leq  \max \left\{\frac{1}{q
x_0}, \frac{1}{q y_0} \right\} + M.
\ee
\end{cor}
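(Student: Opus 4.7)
The plan is to apply the telescoping decomposition used in the proof of Lemma \ref{truncation} with the truncation index $k=0$, which isolates $1/(qx_0)$ and $1/(qy_0)$ as the only terms that can be large, and then to show that all remaining contributions to $|S_q(\alpha,x)|/q$ are controlled by a universal constant via the estimates from Remark \ref{minimumdistance}, Remark \ref{minimumdistancesmall}, and the normalizations (\ref{towersareas}).

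Consider first the case $x\in \Delta^{(n)}$, $q=q_{n+1}$. Pairing $x_{i+1}$ with $y_i$ and applying (\ref{differencen}) from Corollary \ref{translatesxjyj}, one gets
\begin{equation*}
S_q(\alpha,x) = \frac{1}{x_0} - \frac{1}{y_{q-1}} + \left(\frac{1}{x_1} - \frac{1}{y_0}\right) + \sum_{i=1}^{q-2} \frac{\lambda^{(n+1)} - 2x_0}{x_{i+1}\,y_i}.
\end{equation*}
By Remark \ref{minimumdistance} one has $x_j,y_j \geq j\lambda^{(n)}$, so using $q_{n+1}\lambda^{(n)}\geq 1/2$ from (\ref{towersareas}) together with $|\lambda^{(n+1)}-2x_0|\leq 2\lambda^{(n)}$, the three boundary terms $1/(qy_{q-1})$, $1/(qx_1)$ and the full tail sum divided by $q$ are each bounded by an absolute constant. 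Therefore $|S_q|/q \leq 1/(qx_0) + 1/(qy_0) + C_1$. The case $x\in \Delta^{(n+1)}$, $q=q_n$ is handled analogously using the mirror rearrangement
\begin{equation*}
S_q(\alpha,x) = -\frac{1}{y_0} + \frac{1}{x_{q-1}} + \left(\frac{1}{x_0} - \frac{1}{y_1}\right) + \sum_{i=2}^{q-1} \frac{2y_0 - \lambda^{(n)}}{x_{i-1}\,y_i},
\end{equation*}
where now Remark \ref{minimumdistancesmall} gives $x_j,y_j \geq (j/2)\lambda^{(n-1)}$ for $j\geq 1$ and $q_n\lambda^{(n-1)}\geq 1/2$ bounds the boundary terms and the tail; thus $|S_q|/q \leq 1/(qx_0) + 1/(qy_0) + C_2$.

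It remains to pass from the sum $1/(qx_0)+1/(qy_0)$ to the $\max$ appearing in (\ref{estimatecycle}), which amounts to checking that $\min\{1/(qx_0),\,1/(qy_0)\}$ is universally bounded. By Remark \ref{firstlastpoints} (and the recursion (\ref{recursiverelations}) in the second case), $x_0+y_0$ equals $\lambda^{(n)}+\lambda^{(n+1)}$ when $x\in\Delta^{(n)}$ and equals $\lambda^{(n-1)}$ when $x\in\Delta^{(n+1)}$, so $\max(x_0,y_0)\geq \lambda^{(n)}/2$ respectively $\lambda^{(n-1)}/2$; invoking (\ref{towersareas}) once more yields $\min\{1/(qx_0),\,1/(qy_0)\}\leq 4$. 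Combining everything produces (\ref{estimatecycle}) with $M := \max(C_1,C_2)+4$.

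The proof is not conceptually subtle — it is a careful bookkeeping exercise. The only point that requires attention is to verify that every stray term produced by the $k=0$ telescoping (namely $1/(qy_{q-1})$, $1/(qx_1)$, $1/(qx_{q-1})$, $1/(qy_1)$, and the two tail sums) is indeed uniformly controlled by the lower bounds $q_{n+1}\lambda^{(n)}\geq 1/2$ and $q_n\lambda^{(n-1)}\geq 1/2$ in conjunction with the spacing estimates of Remarks \ref{minimumdistance}--\ref{minimumdistancesmall}; this is the main (and only) obstacle.
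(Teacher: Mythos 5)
Your proof is correct and follows the same route the paper intends: the paper's one-line proof just says ``apply the estimates from the proof of Lemma \ref{truncation} with $k=0$ and take $M=24\sum_{i\geq 1}i^{-2}$,'' and you carry out precisely that computation, correctly noticing where $k=0$ is degenerate --- the $i=0$ term of the telescoping tail and the boundary term $1/(qy_k)$ (which Lemma \ref{truncation} bounds by $2/k$) must instead be peeled off and absorbed into the singular part. One small simplification worth noting: since $\frac{1}{x_0}$ and $-\frac{1}{y_0}$ appear with opposite signs in both of your decompositions, you can bound $\left|\frac{1}{qx_0}-\frac{1}{qy_0}\right|\leq\max\left\{\frac{1}{qx_0},\frac{1}{qy_0}\right\}$ directly and skip the final step of bounding $\min\left\{\frac{1}{qx_0},\frac{1}{qy_0}\right\}$ via $x_0+y_0$.
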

\begin{proof}
It follows from the estimates in the proof of Lemma \ref{truncation} for $k=0$,
% (and the analogous expression in the case  ${x}\in \Delta^{(n+1)}$),
%, remarking that $y_q= $ if  ${x}\in \Delta^{(n+1)}$ and  $x_q= $ if  ${x}\in\Delta^{(n+1)}$ and
if we take $M=24 \sum_{i=1}^{\infty} {i^{-2}} $.
\end{proof}

We are now ready to prove Proposition \ref{cycleprop}.
%Let us now show that the sequence $\{n_k\}_{k\in \mathbb{N}}$ satisfies(\ref{boundSf'growth}) of Proposition \ref{boundSf'growthprop}.

\begin{proofof}{Proposition}{cycleprop}
Given $\epsilon>0$, choose $k(\epsilon)$ so that Lemma \ref{truncation} holds. The value of $\BS{\alpha}{{x}}{q}/q$ is hence determined up to $\epsilon$ by the contribution from any number  $k\geq k(\epsilon)$ of closest points to $0$ and $1$. %In order to conclude the proof, it is enough to
Let us show that the positions of $x_i$ and $y_i$ with $0\leq i\leq k(\epsilon)$ are determined by ${x}$ and $a_{n +2-  k }$ with $0\leq k\leq K(\epsilon)$ for some $K(\epsilon)$.  Choose $K= K(\epsilon)$ so that $2^{(K(\epsilon)-2)/2}> k(\epsilon)$. Hence, since the elements of $\xi^{(n)}$ of a fixed type contained inside $\Delta^{(n-K)}$ are at least\footnote{This estimate can be obtained from the recursive relations between $\xi^{(n)}$ and $\xi^{(n+1)}$, see \S\ref{partitionssec}, and the lower bound for the growth of Fibonacci numbers.} $2^{\frac{K-1}{2}}$, we have that
%%%add explanation
%$\{ x_0,x_1,  \dots, x_{k(\epsilon)}\} \subset \Delta^{(n-K(\epsilon)} = [0, \lambda^{(n-K(\epsilon)}) $ and $\{ y_0,y_1,  \dots, y_{k(\epsilon)}\} \subset Delta^{(n-K(\epsilon)+1} = [1- \lambda^{(n-K(\epsilon)+1})$.
\begin{eqnarray}
&& \{ x_0,x_1,  \dots, x_{k(\epsilon)}\} \subset \Delta^{(n-K(\epsilon))}% = [0, \lambda^{(n-K(\epsilon)})
, \nonumber \\ && \nonumber \{ y_0,y_1,  \dots, y_{k(\epsilon)}\} \subset \Delta^{(n-K(\epsilon)+1)} = [1- \lambda^{(n-K(\epsilon)+1)}).
\end{eqnarray}
Let $q g_n^{\epsilon} $ be the sum over all the points  $y_i$, $i=0, \dots, k-1$, which are contained in $\Delta^{(n-K(\epsilon)+1)}$ (here $k\geq k(\epsilon)$ denotes their cardinality) and over the corresponding points $x_i$, $i=0, \dots, k$, which are all contained in $\Delta^{(n-K(\epsilon))}$.  Explicitly, if ${x} \in \Delta^{(n)}$ (and hence $x=x_0$ and $q=q_{n+1})$
\be
\label{gdef}
 g_n^{\epsilon} ({x}, \alpha, q ) = \frac{1}{q} \left(  \frac{1}{x_0} + \sum_{i=0}^{k-1} \frac{{\lambda^{(n+1)}} -{2x_0}  }{y_i x_{i+1}}\right)
 = \frac{1}{q_{n+1} \lambda^{(n)}} \left(  \frac{1}{\frac{x_0}{\lambda^{(n)}}} + \sum_{i=0}^{k-1} \frac{
\frac{\lambda^{(n+1)}}{\lambda^{(n)}}  - \frac{2x_0}{\lambda^{(n)}}}{\frac{y_i }{\lambda^{(n)}} \frac{ x_{i+1}}{\lambda^{(n)}}}   \right) .
\ee
and a similar expression can be written for  ${x} \in \Delta^{(n+1)}$.

We remark that the  points $x_i$ with $0\leq i\leq k$ belong to floors of the tower $Z^{(n)}_l$ of the partition $\xi^{(n)}$ which are contained in $\Delta^{(n-K(\epsilon))}$ and are determined once $x_0$ and the relative position of these floors is given. Similarly, $y_i$ with $0\leq i\leq k$ belong to the floors of the tower $Z^{(n+1)}_0$ of the partition $\xi^{(n)}$ which are contained in $\Delta^{(n-K(\epsilon))}$
 and are determined once $y_0$ and the relative position of the floors are given.

%Moreover, from the structure of the partitions, one can see that if $x\in \Delta^{(n)}$, $x=x_0$ and $y_0= \lambda^{(n+1)} +\lambda^{(n)} - x_0$, while if $x\in \Delta^{(n+1)}$, $y_0=1-x$ and $x_0= \lambda^{(n)}a_{n} +\lambda^{(n+1)} - y_0$.
By Remark \ref{firstlastpoints}, $x_0$ and $y_0$ can be expressed through $x, \lambda^{(n)}$, $\lambda^{(n+1)}$ and $a_{n+1}$.
Moreover, by Remark \ref{recursivepartitions}, the lengths $\lambda^{(n)}$ and  $\lambda^{(n+1)}$ and the entries $a_{n+2 - k }$ with $k\leq K(\epsilon)$ determine the  sequence of floors of  $\xi^{(n)}$ both inside $\Delta^{\left(n-K(\epsilon)\right)}$ and inside  $\Delta^{(n-K(\epsilon)+1)}$.
 Dividing all quantities by $\lambda^{(n)}$ if $x\in \Delta^{(n)}$ (or by $\lambda^{(n+1)}$ if $x\in \Delta^{(n+1)}$),
 the ratios  $x_i/\lambda^{(n)}$ and  $y_i/\lambda^{(n)}$ (or $x_i/\lambda^{(n+1)}$ and  $y_i/\lambda^{(n+1)}$), which are the quantities through which $g_n^{\epsilon} $ is expressed in (\ref{gdef}),  are determined by $x_0/\lambda^{(n)} = x/\lambda^{(n)}$ (or $y_0/\lambda^{(n+1)} = x/\lambda^{(n+1)}$),  $\lambda^{(n+1)}/\lambda^{(n)}$ and the entries $a_{n +2-  k }$ with $0\leq k\leq K(\epsilon)$.
%%%controlla

Hence we have shown that $g_n^{\epsilon} $ can be expressed through the quantities in (\ref{gdependence}).
% exactly the quantities on which the function $g_n^{\epsilon} $ depend.
The relation (\ref{gapproximates}) follows immediately from Lemma \ref{truncation}.
\end{proofof}

\section{General Birkhoff sums.}\label{decompositionsec}

In this section we consider  general Birkhoff sums and  prove the following.
\begin{prop}\label{generalBSprop}
% Let  $\BS{\alpha}{x}{q}$sum along a cycle, $q=q_{n+1}$ or $q=q_{n}$.
For each $\epsilon>0$, $\delta > 0 $, there exists a function $G^{\epsilon, \delta } = G^{\epsilon, \delta }(x,\alpha,  N)$, such
that for all sufficiently large $N$,
\be\label{Gapproximates}
Leb \left\{ (x ,\alpha ) \st \left| \frac{1}{N}  \BS{\alpha}{{x}}{N} -
G^{\epsilon, \delta }( {x},\alpha,  N) \right| \geq \epsilon \right\} \leq
\delta
\ee
and 
%$G^{\epsilon, \delta }({x},\alpha,  N )$ has a joint limiting distribution in $(x,\alpha)$ as $N$ tends to infinity.
%More precisely, 
there exists $K_1=K_1(\epsilon )$ such that any $G^{\epsilon, \delta }({x},\alpha,  N )$  can be expressed as a
function  of the following quantities:%, where $n:=n(\alpha,N)$:
\be\label{Gdependence}\begin{split}
 G^{\epsilon, \delta} \left(\frac{q_{n}}{N},
\frac{{d(x)}}{\lambda^{(n-2)}}, \frac{h(x)}{q_{n-1}}, \frac{\lambda^{(n-
1)}}{\lambda^{(n-2)} },\frac{q_{n-2}}{q_{n-1}}, \frac{1}{q_{n-1} \lambda^{(n-2)} },  a_n, %a_{n\pm 1},
\dots, a_{n\pm  K_1} \right), \, \, \mathrm{if}\, {x}\in  Z^{(n-2)}_l; \\ G^{\epsilon, \delta}\left(\frac{q_{n}}{N},
\frac{{d(x)}}{\lambda^{(n-1)}},  \frac{h(x)}{q_{n-2}}, \frac{\lambda^{(n-
1)}}{\lambda^{(n-2)} },\frac{q_{n-2}}{q_{n-1}},  \frac{1}{q_{n-2} \lambda^{(n- 1)} }, a_n,% a_{n\pm1},
\dots, a_{n\pm K_1} \right), %, 0, \leq k \leq  K
\, \, \mathrm{if} \, {x}\in  Z^{(n-2)}_s;
\end{split}
\ee
\begin{comment}
\be\label{Gdependence}
G^{\epsilon, \delta} = \left\{ \begin{array}{ll} G^{\epsilon, \delta} \left(\frac{q_{n}}{N},
\frac{{d(x)}}{\lambda^{(n-2)}}, \frac{h(x)}{q_{n-1}}, \frac{\lambda^{(n-
1)}}{\lambda^{(n-2)} },\frac{q_{n-2}}{q_{n-1}}, \frac{1}{q_{n-1} \lambda^{(n-2)} },  a_n, %a_{n\pm 1},
\dots, a_{n\pm  K_1} \right) %, 0, \leq k \leq  K
& \, \, % \mathrm{if}\,
 {x}\in  Z^{(n-2)}_l; \\ G^{\epsilon, \delta}\left(\frac{q_{n}}{N},
\frac{{d(x)}}{\lambda^{(n-1)}},  \frac{h(x)}{q_{n-2}}, \frac{\lambda^{(n-
1)}}{\lambda^{(n-2)} },\frac{q_{n-2}}{q_{n-1}},  \frac{1}{q_{n-2} \lambda^{(n- 1)} }, a_n, %a_{n\pm1},
\dots, a_{n\pm K_1} \right) %, 0, \leq k \leq  K
& \, \,  
% \mathrm{if} \,
 {x}\in  Z^{(n-2)}_s;
\end{array} \right. 
\ee
\end{comment}
where $n:=n(\alpha,N)$ is as in (\ref{n(N)def}) and where $d(x)=d_{n-2}(x)$ and $h(x)=h_{n-2}(x)$ are defined as follows:\footnote{See \S\ref{relpos} 
%the paragraph \emph{Relative positions inside the towers}in \S\ref{cycledecomp}
 below for a geometric explanation of the meaning of $d(x)$
and $h(x)$.}
\begin{itemize}
\item[-]
%\ees
 if $x\in \Delta^{(n-2)}_j \subset Z^{(n-2)}_l$, $d(x) \in \Delta^{(n-2)}_0 $ is
such that $R_{\alpha}^j d(x) = x$ and $h(x)=q_{n-1}-j$, for some $0\leq j < q_{n-1}$;
\item[-]% while
 if $x\in \Delta^{(n-1)}_j \subset Z^{(n-2)}_s$, then $d(x) \in \Delta^{(n-1)}_0
$ is such that $R_{\alpha}^j (1-d(x)) = x$ and $h(x)=q_{n-2}-j$, for some $0\leq j < q_{n-2}$.
% (distance of $\Delta^{(n)}_j$ from the top of the tower  $Z^{(n)}_l$)
\end{itemize}
\end{prop}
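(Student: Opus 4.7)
With $n = n(N, \alpha)$, I decompose the orbit segment $\{R_\alpha^i x\}_{i=0}^{N-1}$ according to the tower structure of $\xi^{(n-2)}$. If $x \in \Delta^{(n-2)}_j \subset Z^{(n-2)}_l$ (the case $Z^{(n-2)}_s$ is symmetric), then $x$ sits at height $j = q_{n-1} - h(x)$ of the large tower, and the orbit factors as: an initial partial cycle of length $h(x)$ climbing from $x$ to the top of its tower; a sequence of $m = m(\alpha, x, N)$ complete cycles, each of length $q_{n-1}$ or $q_{n-2}$ according to which tower it climbs (the sequence of tower types being determined by the induced exchange $T^{(n-2)}$); and a final partial cycle of length $r < q_{n-1}$. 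By Corollary~\ref{limitingquantities} the ratios $q_{n-1}/N$, $q_{n-2}/N$ and $q_n/N$ are bounded away from $0$ and $\infty$ in distribution, so $m \leq M_0(\delta)$ outside an exceptional set of Lebesgue measure $\leq \delta/3$.

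\textbf{Cycle-by-cycle approximation.} To each of the (at most $M_0$) complete cycles I would apply Proposition~\ref{cycleprop} with parameter $\epsilon/2$: writing $z_k$ for the starting point of the $k$-th cycle,
\[
\left|\BS{\alpha}{z_k}{q_{i_k}} - q_{i_k}\, g_{n-2}^{\epsilon/2}(\alpha, z_k, q_{i_k})\right| \leq (\epsilon/2)\, q_{i_k}.
\]
Summing over $k$ and dividing by $N$ gives an aggregate error of at most $\epsilon/2$, since $\sum_k q_{i_k} \leq N$. The sequence $z_1, \dots, z_m$ is the $T^{(n-2)}$-orbit of a base point determined by $d(x)$, and by Remark~\ref{recursivepartitions} both its combinatorics and each value $g_{n-2}^{\epsilon/2}(\alpha, z_k, q_{i_k})$ depend only on $d(x)/\lambda^{(n-2)}$, $\lambda^{(n-1)}/\lambda^{(n-2)}$, $q_{n-2}/q_{n-1}$, $1/(q_{n-1}\lambda^{(n-2)})$, and the continued-fraction entries $a_n, \dots, a_{n \pm K_1}$ for $K_1$ chosen sufficiently large (depending on the $K(\epsilon)$ of Proposition~\ref{cycleprop}). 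For the initial partial cycle I would repeat the truncation argument of Lemma~\ref{truncation} directly on $\BS{\alpha}{x}{h(x)}$: the $k(\epsilon)$ closest visits of this partial orbit to $0$ and $1$ are determined by $d(x)/\lambda^{(n-2)}$, $h(x)/q_{n-1}$ and the same range of entries, while the remaining terms contribute at most $\epsilon/4$ after normalization by $N$, via the $\sum_i i^{-2}$ tail bound from Remark~\ref{minimumdistance}. The final partial cycle is treated analogously, its starting position being read off from $q_n/N$ together with the quantities already listed.

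\textbf{Assembly and exceptional set.} Combining the three contributions yields the desired $G^{\epsilon,\delta}(x, \alpha, N)$ as a function of precisely the variables listed in \eqref{Gdependence}. A further $\delta/3$-measure exceptional set is needed to handle those $(\alpha, x)$ for which the dominant term $1/(N\, d(x))$ in the partial-cycle contribution exceeds a large threshold $M(\delta)$; by direct Lebesgue estimation this set has measure $O(1/M(\delta))$, which can be made less than $\delta/3$. The main obstacle is precisely the partial-cycle analysis, since partial cycles lack the full translation symmetry $y_i = x_{i \pm 1} + \mathrm{const}$ of Corollary~\ref{translatesxjyj} that drives the cancellations in the complete-cycle case. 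The resolution is that the almost-symmetry of Lemma~\ref{almostsymmetrylemma} still applies to the floors actually visited by the partial orbit, once one reads off via $d(x)$ and $h(x)$ which floors of $\xi^{(n-2)}$ those are; the uncancelled boundary terms at each end are a bounded number of explicit contributions of the form $1/R_\alpha^i d(x)$, which are precisely what the variables $d(x)/\lambda^{(n-2)}$, $h(x)/q_{n-1}$ are designed to encode.
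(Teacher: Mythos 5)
Your decomposition differs structurally from the paper's: you split the orbit into complete cycles of a \emph{single} order $n-2$ plus two partial-cycle remainders, whereas the paper recursively cuts those remainders into complete cycles of all lower even orders $m<n-2$ (see \S\ref{cycledecomp} and (\ref{BSdecomp})). This is not a cosmetic difference; it is precisely where your argument has a gap.

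The problem is the partial cycles. You propose to treat the initial segment $\BS{\alpha}{x}{h(x)}$ by ``repeating the truncation argument of Lemma~\ref{truncation} directly'' and invoking a $\sum i^{-2}$ tail bound. But that tail bound is produced by the rigid-translate pairing $y_i-x_{i+1}=\mathrm{const}$ of Corollary~\ref{translatesxjyj}, which converts $\sum_i(1/x_{i+1}-1/y_i)$ into a sum of terms of size $O\bigl(1/(i^2\lambda^{(n-2)})\bigr)$; that pairing is a property of a \emph{complete} pass through the tower, where every floor is visited exactly once. For a partial orbit (floors $j,\dots,q_{n-1}-1$), the rigid-translate relation between the two sorted lists of distances simply does not hold: Lemma~\ref{almostsymmetrylemma} is a near-palindromicity of the full coding string, and a contiguous run of tower heights corresponds to a \emph{scattered} subset of positions in $[0,1)$, not a contiguous sub-string; a sub-string of a near-palindrome is not a near-palindrome. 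Without the pairing, Remark~\ref{minimumdistance} only gives $|S_{h(x)}|/N \lesssim \frac{1}{q_{n-1}\lambda^{(n-2)}}\sum_{i\le h(x)} i^{-1} \sim \log h(x)$, which is not bounded. So the claim that the uncancelled boundary terms are ``a bounded number of explicit contributions'' is exactly what needs to be proved, and neither the almost-symmetry lemma nor Lemma~\ref{truncation} delivers it for partial orbits. The paper's multi-order decomposition is designed to sidestep this: the partial segments are re-expressed as unions of \emph{full} cycles of orders $m<n-2$, each of which does have its own Corollary~\ref{translatesxjyj} cancellation; Remark~\ref{minimumdistance01} then bounds the first hit of each such sub-cycle away from $0,1$ by $\lambda^{(m+3)}$, and (\ref{coefficients}) makes the sum over orders geometrically convergent, yielding Proposition~\ref{finitecycleslemma}. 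This multi-order structure is also what produces the paper's $K_1=M+K$ and the inductive identification of the variables in (\ref{Gdependence}) via $\overline{x}^{(m)}_0,\underline{x}^{(m)}_0$ in \S\ref{proofofgeneralBSprop}; a single-order decomposition cannot reach those variables.
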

As a corollary of this Proposition, we prove that $G^{\epsilon, \delta }({x},\alpha,  N )$ has a joint {limiting} distribution in $(x,\alpha)$ as $N$ tends to infinity.
%This Proposition allows us to prove that  will be proved first:
%The strategy to show that $G^{\epsilon, \delta}$ is to first show that it ca
%The relevance of Proposition \ref{generalBSprop} is that
Indeed, all the quantities through which $G^{\epsilon, \delta}$ is expressed in (\ref{Gdependence}) have
limiting distributions as $N$ tends to infinity (some of them were considered in
\S\ref{renewalsec}, for the other ones see \S\ref{limitingsec}, Lemma
\ref{limitingratios}) and, together with the continuity properties of $G^{\epsilon, \delta }({x},\alpha,  N )$ (see Lemma \ref{discontinuities}),  this implies that
%. Hence, as shown in \ref{limitingsec},
$G^{\epsilon, \delta}$ has a limiting distribution. %Moreover, we remark that $G^{\epsilon, \delta}(x, \alpha, N)$ approximates $\frac{1}{N}\BS{\alpha}{{x}}{N}$ in probability uniformly in $N$.

In order to prove Proposition \ref{generalBSprop} (in \S \ref{proofofgeneralBSprop}), we
first show how to decompose $\BS{\alpha}{{x}}{N}$ into Birkhoff sums along
cycles, see \S\ref{cycledecomp}. Then, in \S\ref{finitelymany} we show that,
neglecting a  set of $(\alpha, {x})$ of small measure, we can reduce the
decomposition to finitely many cycles. Hence the function $G^{\epsilon, \delta}$ is
defined combining finitely many functions  $g_n^{\epsilon} $ constructed in Proposition
\ref{cycleprop} to approximate sums along cycles.

\subsection{Decomposition into cycles.}\label{cycledecomp}
Fix $N$ and $\alpha$ and let $n = n(N, \alpha)$ %(the dependence on $N$ and $\alpha$ will be omitted in this section since $N$ and $\alpha$ are fixed throughout
 be the unique \emph{even} $n\in \mathbb{N}$ such that $q_{n-2} < N < q_{n}$. The dependence on $N$ and $\alpha$ will be omitted in this section since $N$ and $\alpha$ are fixed throughout. Consider the partition $\xi^{(n-2)}$. We will decompose the orbit $\{R_{\alpha}^i x, \, 0\leq i < N \}$ into cycles and $n-2$ will be the biggest order of the cycles involved in the decomposition.

\subsubsection{Relative positions inside the towers.}\label{relpos}
The definition of $G^{\epsilon,\delta}$ depends on whether $x \in Z^{(n-2)}_l$ or  $x \in Z^{(n-2)}_s$. Throughout this section, the quantities $d(x)=d_{n-2}(x)$ and $h(x)=h_{n-2}(x)$ are defined as in Proposition \ref{generalBSprop} and locate the position of $x $ inside the tower of $\xi^{(n-2)}$ to which it belongs. If $x\in  Z^{(n-2)}_l$, there exists $0\leq j < q_{n-1}$ such that $x\in \Delta^{(n-2)}_j $; if $x_0 \in \Delta^{(n-2)}_0 $ is such that $R_{\alpha}^j x_0 = x$, then $d_{n-2}(x)=x_0$ is the distance from $0$ in the base floor. Similarly, if $x\in  Z^{(n-2)}_s$,  there exists $0\leq j < q_{n-2}$ such that $x\in \Delta^{(n-1)}_j $ and if $z_0 \in \Delta^{(n-1)}_0 $ is such that $R_{\alpha}^j z_0 = x$, then  $d_{n-2}(x)=1-z_0$ is the distance in the base from $1$. The quantity $h_{n-2}(x)$, which is given respectively by $q_{n-1}-j$ or  $q_{n-2}-j$, represents the distance of the floor to which $x$ belongs from the top of the tower. In particular, we remark that by construction $R_{\alpha}^{h_{n-2}(x)}x \in \Delta(n\!-\!2)$.

\subsubsection{Cycles of order $n-2$.}
Let us use the notation $\Orb{x}{N} := \{R_{\alpha}^i x, \, 0\leq i < N \}$ to denote orbit segments.
%%%%Since the  return times to $\Delta(n)$ are either  $q_{n+1}$ and $q_n$ $q_{n+1}>q_n >N$, there is at most one point in  $\Orb{x}{N}\cap \Delta(n)$.
% If $h(x)\leq N$, this point is exactly $R_{\alpha}^{h(x)}x $ and let us define  $\underrline{x}^{(n-2)}_0 = \overline{x}^{(n-2)}_0 := R_{\alpha}^{h(x)} x$.
\begin{figure}
\centering
\includegraphics[width=0.95\textwidth]{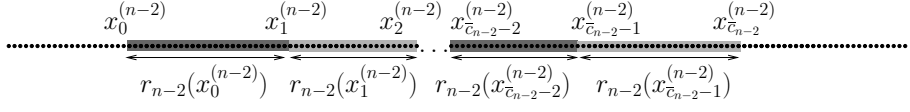}
\caption{Decomposition of the orbit $\Orb{x}{N}$ into cycles of order $n-2$.\label{orbitdecompBS}}
\end{figure}
Let us first locate inside  the orbit $\Orb{x}{N}$ all orbit segments which correspond to cycles along towers of $\xi^{(n-2)}$. As just remarked, $R_{\alpha}^{h_{n-2}(x)} x \in \Delta (n-2)= \Delta^{(n-2)}\cup \Delta^{(n-1)} $ and $h_{n-2}(x)$ is the first time $i$ for which $R_{\alpha}^i x \in \Delta(n-2)$. Let $x^{(n-2)}_0 := R_{\alpha}^{h_{n-2}(x)}x$. The following visits to $\Delta(n\!-\!2)$ can be expressed though $T^{(n-2)}$, first return map to  $\Delta(n\!-\!2)$ (see \ref{partitionssec}). Let %$x^{(n)}_i :=(T^{(n)})^i  x^{(n)}_0$. Let
\bes
x^{(n-2)}_i :=\left(T^{(n-2)}\right)^i  x^{(n-2)}_0; \qquad
\overline{c}_{n-2} : = \max \{ i \in \mathbb{N} \st \left(T^{(n-2)}\right)^i  x^{(n-2)}_0 \in \Orb{x}{N}\},
\ees
where $\overline{c}_{n-2}$ gives the number of visits of  $\Orb{x}{N}$ to  $\Delta(n\!-\!2)$.
%From the definition of $n$, it follows that:
%\begin{rem}
%The cardinality $\overline{c}_{n-2}$ of cycles of of order $n$ satisfy $\overline{c}_{n-2} \leq 2 a_n $.
%\end{rem}
Let $r_{n-2}(x)$ be the first return time of $x\in \Delta{(n-2)} $ to  $\Delta{(n-2)}$, i.e.~$r_{n-2}(x)= q_{n-1}$ if $x\in  \Delta^{(n-2)}$,  $r_{n-2}(x)= q_{n-2}$ if $x\in  \Delta^{(n-1)}$. Then each orbit segment
%$\Orb{x^{(n-2)}_i}{r_{n-2}(x^{(n-2)}_i)} =  \{R_{\alpha}^i x^{(n-2)}_i, \, 0\leq i < {\overline{r}}_n(x^{(n-2)}_i ) \}$
\bes
\Orb{x^{(n-2)}_i}{r_{n-2}(x^{(n-2)}_i)} =  \{R_{\alpha}^i x^{(n-2)}_i, \, 0\leq i < {{r}}_{n-2}(x^{(n-2)}_i ) \}
\ees
is a \emph{cycle of order $n-2$} and all cycles corresponding to $i=0,\dots, \overline{c}_{n-2} -1 $ are completely contained in $\Orb{x}{N}$, as in the representation of the orbit decomposition in Figure \ref{orbitdecompBS}.
%Let ${r}^{\overline{c}_{n-2}}_{n-2}(x) = \sum_{i=0}^{\overline{c}_{n-2}-1} {r}_{n-2}(x^{(n-2)}_i)$ the time of the ${\overline{c}_{n-2}}^{\mathrm{th}}$ return, so that $R_{\alpha}^{{r}_{n-2}^{\overline{c}_{n-2}}(x)} x = x^{(n-2)}_{\overline{c}_{n-2}}$.
Hence, so far
\be\label{orbitdecompn-2}
\begin{split}
\Orb{x}{N} & =  %\Orb{x}{h_{n-2}(x)}
\{R_{\alpha}^i x, \, 0\leq i < h_{n-2}(x) \} \,
\cup \,
\bigcup_{i=0}^{\overline{c}_{n-2}-1} \Orb{x^{(n-2)}_i}{{r}_n(x^{(n-2)}_i)} \\ & \cup  \,
\{R_{\alpha}^i (x ), \,h_{n-2}(x)+ \sum_{i=0}^{\overline{c}_{n-2}-1} {r}_{n-2}(x^{(n-2)}_i) \leq i < N \},
\end{split}
\ee
where the orbit segments which appear in the central union are cycles of order $n-2$. We will refer to the first and the last term as to the \emph{initial} and \emph{final} orbit \emph{segments} (see again Figure \ref{orbitdecompBS}).

Let us estimate the number $\overline{c}_{n-2}$ of cycles of order $n-2$. %, which is given by $\overline{c}_{n-2}$.
Since the cardinality of points in a cycle of order $n-2$ is at least $q_{n-2}$ and $q_n>N$ definition of $n$, $\overline{c}_{n-2} \leq N/q_{n-2}\leq  q_n/q_{n-2}$ and  from the recurrent relation $q_n = a_{n} q_{n-1} + q_{n-2}$ we have the following.
\begin{rem}\label{cyclesnbound}
%The cardinality $\overline{c}_{n-2}$ of cycles of of order $n-2$ satisfy
$\overline{c}_{n-2} \leq ( a_n +1 )(  a_{n-1} +1 )$.
\end{rem}

\subsubsection{Further cycles.}
\begin{figure}
\centering
\includegraphics[width=0.95\textwidth]{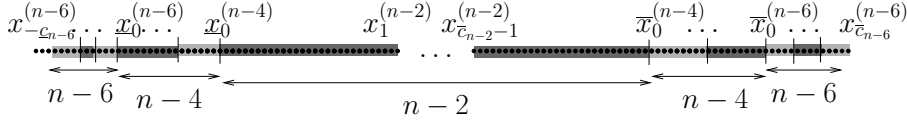}
\caption{Decomposition of the orbit $\Orb{x}{N}$ into cycles of further orders.}
\end{figure}
The initial and final segments of $\Orb{x}{N}$ in (\ref{orbitdecompn-2})
%, i.e.~
%\bes\begin{split}
%\{R_{\alpha}^i x, \, 0\leq i < h(x) \}= \{R_{\alpha}^{-i} x^{(n-2)}_{0}, \, 0\leq i < h(x) \} \qquad \mathrm{and}  \\ \qquad \{R_{\alpha}^i (x ), \, h_{c_n}(x) \leq i < N \}= \{R_{\alpha}^i (x^{(n-2)}_{c_n} ), 0\leq i < N - h_{c_n}(x) \},
%\ees
%%%$\{R_{\alpha}^i x, \, 0\leq i < h(x) \}= \{R_{\alpha}^{-i} x^{(n)}_{0}, \, 0\leq i < h_{n-2}(x) \}$ and  $\{R_{\alpha}^i (x ), \, {r}_{c_{n-2}}(x) \leq i < N \}= \{R_{\alpsha}^i (x^{(n)}_{c_{n-2}} ), 0\leq i < N - {r}_{c_{n-2}}(x) \} $,
 will be decomposed by induction into cycles of even orders $m<n-2$.
%Consider first the order $n-4$.
Let   $ \overline{x}^{(n-4)}_{0}:=x^{(n-2)}_{\overline{c}_{n-2}} $ and  $\underline{x}^{(n-4)}_{0}:=x^{(n-2)}_{0} $ be respectively the first point of the last segment and the last point of the initial segment.
%Assume by induction that all
 %%%Remark that by construction the lengths of the two segments is at most $q_{n-1}$.
%Since by construction of the partitions,   $\Delta(n)\subset  \Delta(n\!-\!2)$, we have  $x^{(n)}_{c_n} , x^{(n)}_0 \in \Delta(n\!-\!2)$.
% Let  $\underline{x}^{(n-4)}_{0}:=x^{(n)}_{0} $ and  $\overline{x}^{(n-4)}_{0}:=x^{(n)}_{c_{n-4}} $ be respectively the last point of the initial segment and the first of the last segment.
We remark that  we have  $\overline{x}^{(n-4)}_{0}, \underline{x}^{(n-4)}_{0} \in \Delta(n\!-\!4)$, since by construction of the partitions   $\Delta(n\!-\!2)\subset  \Delta(n\!-\!4)$.

Assume by induction that we have already subdivided the initial and final segments into cycles up to order $m+2$ ($m$ even) and let
% and two initial and final reminder segments and that
$\overline{x}^{(m)}_{0}$ and  $\underline{x}^{(m)}_{0}$ be respectively the last point of the initial segment and the first point of the final segment, with $\underline{x}^{(m)}_{0}, \overline{x}^{(m)}_{0} \in  \Delta(m\!+\!2)\subset  \Delta(m)$. % which we want to subdivide into cycles of order $m$.
 Let
\begin{eqnarray}
{x}^{(m)}_i :={T^{(m)}}^i   \overline{x}^{(m)}_{0},\, \,  i=0,\dots,  \overline{c}_{m};  &&   \overline{c}_{m} : = \max \{ i \in \mathbb{N} \st {T^{(m)}}^i  \overline{x}^{(m)}_0 \in \Orb{x}{N}\}; \nonumber \\
{x}^{(m)}_{-i} :={T^{(m)}}^{-i}   \underline{x}^{(m)}_{0}, \, \,   i=1,\dots,  \underline{c}_{m};  &&   \underline{c}_{m} : = \max \{ i \in \mathbb{N} \st {T^{(m)}}^{-i}  \underline{x}^{(m)}_0 \in \Orb{x}{N}\}. \nonumber
\end{eqnarray}
The points  $ \{{x}^{(m)}_i, \, 0\leq i\leq {\overline{c}}_{m}\}$ give all the visits to $\Delta(m)$ which occur in the final orbit segment and  $\{ {x}^{(m)}_i, \, - \underline{c}_{m} \leq i \leq -1\}$ %$ \cup \{\underline{x}^{(m)}_0\}$
 give all the visits to $\Delta(m)$ which occur in the initial orbit segment. Moreover, by construction, %${x}^{(m)}_i \in \Delta(m)\backslash \Delta(m+2)$
\be\label{inclusionsvisits}
{x}^{(m)}_i \in \Delta(m)\backslash \Delta(m+2), \qquad   - \underline{c}_{m} \leq i \leq -1\ , \quad    1 \leq  i\leq {\overline{c}}_{m}.
\ee

Let $r_{m}(x)$ be as before the first return time of $x\in \Delta(m)$ to $\Delta(m)$.
%and  $\underline{r}_{m}(x)$ be the first return time of $x\in \Delta(m)$ to $\Delta(m)$ under the action of $R_{\alpha}$ and ${R_{\alpha}}^{-1}$ respectively. Explicitly,
% ${r}_{m}(x)= q_{m+1}$ if $x\in  \Delta^{(m)}$,  ${r}_{m}(x)= q_{m}$ if $x\in  \Delta^{(m+1)}$ and
%%%\bes
%%%\begin{array}{ll}
  %\begin{array}{ll}
%%%  r_{m}(x) = \left\{ \begin{array}{ll} q_{m+1}& \quad \mathrm{if}\,  x\in  \Delta^{(m)}; \\ q_{m}& \quad \mathrm{if}\,  x\in  \Delta^{(m+1)};
%%%  \end{array} \right.
%\end{array}
%%%&
%\begin{array}{ll}
%\begin{array}{ll}
%%%  \underline{r}_{m}(x) =  \left\{ \begin{array}{ll} q_{m+1} & \quad \mathrm{if}\,  x\in T^{(m)} \Delta^{(m)}; \\ q_{m}& \quad \mathrm{if}\,  x\in T^{(m)} \Delta^{(m+1)}. \end{array} \right.
%\end{array}
%%%  \end{array}
%%%\ees
%%%Remark that $ \underline{r}_{m}(\underline{x}^{(m)}_{i+1}) =  r_{m}(\underline{x}^{(m)}_i)$
%%%Moreover, let $r_{m}^{\overline{c}_n} = \sum_{i=0}^{\overline{c}_m} r_{m}(\overline{x}^{(m)}_i)$ and
%%%$\underline{r}_{m}^{\underline{c}_n} = \sum_{i=0}^{\underline{c}_m} \underline{r}_{m}(\underline{x}^{(m)}_i)$.
% $r_{{\underline{c}}_n} = \sum_{i=1}^{{\underline{c}}_n-1} q_{m}(\underline{x}^{(m)}_i)$.
Thus, all orbit segments
%$\Orb{\overline{x}^{(m)}_i}{q_{m}(\overline{x}^{(m)}_i)}$, for $i=0,\dots, c_{m}-1$ and  $\Orb{\overline{x}^{(m)}_i}{q_{m}(\overline{x}^{(m)}_i)}$, for $i=0,\dots, c_{m}-1$
\bes \Orb{{x}^{(m)}_i}{r_{m}({x}^{(m)}_i)}, \quad  i=0,\dots, \overline{c}_{m}-1,
% \quad \Orb{{x}^{(m)}_i}{r_{m}({x}^{(m)}_i)}, \, \,
\quad i=-\underline{c}_{m}, \dots, -1,
\ees
are cycles of order $m$ which are completely contained in $\Orb{x}{N}$.  Moreover, since by (\ref{inclusionsvisits}) the initial and final segment do not contain any visit to $\Delta(m\!+\!2)$, except %$\underline{x}^{(m)}_{0}$ and
 $\overline{x}^{(m)}_{0}$ and since the points in $[0,1)\backslash \Delta(m\!+\!2)$ have at least distance $\lambda^{(m+3)}$ from $0$ and $1$, we also have the following.
\begin{rem}\label{minimumdistance01}
The distance from $0$ and from $1$ of any of the points in the orbit segments $\Orb{{x}^{(m)}_i}{r_{m}({x}^{(m)}_i)}$ for  $-\underline{c}_{m}\leq i \leq  -1$ and $1\leq i \leq \overline{c}_m-1$ is at least $\lambda^{(m+3)}$.
%, which is the minimum distance from $0$ and $1$ of points in the complement of $\Delta(m+2)$)
\end{rem}
In the previous Remark, the orbit corresponding % $i=-1$ and
 to $i=0$ was excluded since it contains $\overline{x}^{(m)}_{0}\in \Delta(m+2)$.
%they contain $\underline{x}^{(m)}_{0}$ and $\overline{x}^{(m)}_{0}$ respectively.

Let $\overline{x}^{(m-2)}_{0}:={x}^{(m)}_{\overline{c}_{m}} $ and  $\underline{x}^{(m-2)}_{0}:={x}^{(m)}_{-\underline{c}_{m}}$ and continue by induction to decompose the remaining initial and final segments.  If, for some $m$, we have $\underline{c}_{m}=\overline{c}_{m}=0$, there are no cycles of order $m$ in the decomposition. If either $\underline{c}_{m}$ or $\overline{c}_{m}$ are not zero, we say on the contrary that the order $m$ \emph{is present} in the decomposition. \label{present}

%%\bes
%%\{ {R_{\alpha}}^i  x, \, 0\leq i <h_{m}(x) \} \qquad \mathrm{and} \qquad
%%\ees
%%%\begin{eqnarray}
%%%\{ {R_{\alpha}}^i ( x), \, 0\leq i <h_{m}(x) \}
%%% & = &   \{ {R_{\alpha}}^{-i} ({\underline{x}^{(m)}}_{\underline{c}_m}), \, 0\leq i <h_{m}(x)\}% r_{m} ( {\underline{x}}_{\underline{c}_m}) \},
%%% \nonumber \\
%%\{ R_{\alpha}^i (x ), \, r_{{\overline{c}}_n} \leq i < N \}& = & \{R_{\alpha}^i (\overline{x}^{(n)}_{\overline{c}_n} ), 0\leq i < N - r_{\overline{c}_n} \}, \nonumber
%%%\{R_{\alpha}^i (x ), \, h_{m}(x)+ r^{m}_{\overline{c}_m}\leq i < N \} =
%%%   & = & \{R_{\alpha}^i ({\overline{x}^{(m)}}_{\overline{c}_m} ), 0\leq i < N - r^m_{\overline{c}_m -h_{m}(x)} \}
 %%%\nonumber
%%%\end{eqnarray}
%%\{ R_{\alpha}^i (x ), \, r_{{\overline{c}}_n} \leq i < N \}& = & \{R_{\alpha}^i (\overline{x}^{(n)}_{\overline{c}_n} ), 0\leq i < N - r_{\overline{c}_n} \}, \nonumber
% \begin{eqnarray}
%%%\{ {R_{\alpha}}^i  x, \, 0\leq i <h_{m}(x) \}
%%% & = &
%%% \{ {R_{\alpha}}^{-i} {\underline{x}^{(m)}}_{\underline{c}_m}, \, 0\leq i < r_{m} ( {\undh_{m}(x)erline{x}}_{\underline{c}_m}) \},
%%%\nonumber \\
%%\{ R_{\alpha}^i (x ), \, r_{{\overline{c}}_n} \leq i < N \}& = & \{R_{\alpha}^i (\overline{x}^{(n)}_{\overline{c}_n} ), 0\leq i < N - r_{\overline{c}_n} \}, \nonumber
%\end{eqnarray}
%$\{R_{\alpha}^i x, \, 0\leq i <r_{{\underline{c}}_m} \} =  \{ R_{\alpha}^{-i} {\underline{x}}^{(m)}_{0}, \, 0\leq i < r_{{\underline{c}}_m} \}$ and  $\{R_{\alpha}^i (x ), \, r_{\overline{c}_n} \leq i < N \} = \{R_{\alpha}^i ({\overline{x}^{(n)}}_{\overline{c}_n} ), 0\leq i < N - r_{\overline{c}_n } \} $,

We get the following decomposition of the whole orbit into cycles:
% For uniformity of notation, set $\underline{c}_n:=0$.
\bes
\Orb{x}{N}= \bigcup_{\begin{subarray}{c}{{m=0}}\\{m \, \mathrm{even}}\end{subarray}}^{n-2} \bigcup_{i=-\underline{c}_m}^{\overline{c}_m-1}  \Orb{{x}^{(m)}_i}{r_{m}({x}^{(m)}_i)},
\ees
where for uniformity of notation, we set $\underline{c}_{n-2}:=0$ and the union for a given $m$ has to be considered empty when $\overline{c}_m=\underline{c}_m=0 $.  
%%%%\bes
%%%\Orb{x}{N} = \bigcup_{m=0}^{n-4} \bigcup_{i=0}^{\underline{c}_m-1}  \Orb{\underline{x}^{(m)}_i}{q_{m}(\underline{x}^{(m)}_i)}  \cup
%%%\bigcup_{i=0}^{c_{n-2}-1} \Orb{x^{(n-2)}_i}{r_{n-2}(x^{(n-2)}_i)} \cup \bigcup_{m=0}^{n-4} \bigcup_{i=0}^{\overline{c}_m-1}  \Orb{\overline{x}^{(m)}_i}{q_{m}(\overline{x}^{(m)}_i)}.
%\cup \{R_{\alpha}^i x^{(n)}_{c_{my}}, \, 0\leq i < N- r_{c_{n-2}}(x) \}.
%%%\ees
%\begin{rem} \label{cardinalitycycles}
Since by construction the length of the initial and final segment after the decomposition of order $m+2$  is at most $q_{m+3}$
%do not contain any cycle of order  $m+2$, their length is at most $q_{m+3}$
%$r_{{\underline{c}}_m} \leq q_{m+1}$, $ N - r_{\overline{c}_n} \leq q_{m+1} $,
and each cycle of order $m$ has length at least $q_{m}$, using that $q_{m+1}/q_{m}\leq a_{m+1}+1$, we have the following.
\begin{rem} \label{cardinalitycycles}
The number $\overline{c}_m + \underline{c}_m$ of cycles of order $m$ in the decomposition satisfies
\bes
\overline{c}_m + \underline{c}_m\leq2 (a_{m+3} +1)(a_{m+2}+1)(a_{m+1} +1).
%\overline{c}_m \leq (a_{m+3} +1)(a_{m+2}+1)(a_{m+1} +1) , \quad  \underline{c}_m \leq (a_{m+3} +1)(a_{m+2}+1)(a_{m+1} +1)
\ees
%where $a_m$ is the $m^{th}$ entry of the continued fraction expansion of $\alpha$.
\end{rem}

\subsection{Reduction to finitely many cycles.}\label{finitelymany}
The decomposition in \S\ref{cycledecomp} implies that:%, for $n=n(N,\alpha)$,
\be\label{BSdecomp}
 \BS{\alpha}{{x}}{N} =  \sum_{\begin{subarray}{c}{{m=0}}\\{m \, \mathrm{even}}\end{subarray}}^{n(N,\alpha)-2} \sum_{i=-\underline{c}_m}^{\overline{c}_m-1}  \BS{\alpha}{{x}^{(m)}_i}{r_{m}({x}^{(m)}_i)}.
\ee
In order to construct a good approximation of $\frac{1}{N}\BS{x_0}{\alpha}{N}$ in measure, it is enough to consider in the previous expression only a fixed and finite number of cycles.
\begin{prop}\label{finitecycleslemma}
For each $\epsilon>0$ and $\delta>0$  there exist an even integer $M\geq 2$ and $N_1\in \mathbb{N}$ such that, for all $N\geq N_1$, we have
\be\label{finitecycles}
Leb \left\{ (x ,\alpha ) \st \left| \frac{1}{N}  \BS{\alpha}{{x}}{N} -   \frac{1}{N} \sum_{\begin{subarray}{c}{{m=n(N,\alpha)-M}}\\{m \, \mathrm{even}}\end{subarray}}^{n(N,\alpha)-2} \sum_{i=-\underline{c}_m}^{\overline{c}_m-1}  \BS{\alpha}{{x}^{(m)}_i}{r_{m}({x}^{(m)}_i)}
% \sum_{\begin{subarray}{c}{{m=n-M}}\\{m \, \mathrm{even}}\end{subarray}}^{n} \sum_{i=0}^{ c_{m}-1} \BS{\alpha}{x^{(m)}_i }{}
 \right| \geq \epsilon \right\} \leq \delta,
\ee
where we adopt the convention that if $n(\alpha,N)<M$ the sum in  (\ref{finitecycles}) runs from $m=0$. 
\end{prop}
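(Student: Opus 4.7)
The strategy uses the cycle decomposition (\ref{BSdecomp}). Write
\[
\frac{1}{N}\BS{\alpha}{x}{N} \;=\; A^{\mathrm{main}}_{N,M}(x,\alpha) + A^{\mathrm{tail}}_{N,M}(x,\alpha),
\]
where $A^{\mathrm{main}}_{N,M}$ collects cycles of order $m\in[n(N,\alpha)-M,\,n(N,\alpha)-2]$ and $A^{\mathrm{tail}}_{N,M}$ those of order $m\leq n(N,\alpha)-M-2$. The aim is to choose $M=M(\epsilon,\delta)$ so that $Leb\{|A^{\mathrm{tail}}_{N,M}|\geq\epsilon\}\leq\delta$ uniformly in $N\geq N_1(\epsilon,\delta)$. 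Applying Corollary \ref{cycleestimatelemma} to each cycle,
\[
|\BS{\alpha}{x^{(m)}_i}{r_m}| \;\leq\; r_m M + \max\{1/x_{0,i}^{(m)},\,1/y_{0,i}^{(m)}\},
\]
splits the bound on $A^{\mathrm{tail}}_{N,M}$ into a linear and a principal-value contribution, which we treat separately.

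The linear piece $(M/N)\sum_{m\leq n-M-2}\sum_i r_m(x^{(m)}_i)$ equals $M/N$ times the total orbit length in cycles of order $\leq n-M-2$. By the telescoping structure of the decomposition, this equals the combined length of the residual initial and final orbit segments remaining after descent through level $n-M$; each such residual sits inside a single tower of $\xi^{(n-M)}$, hence has length $\leq q_{n-M+1}$, for a total of $\leq 2q_{n-M+1}$. Combined with $N\geq q_{n-2}$ and the deterministic Fibonacci-type estimate $q_{k+2}\geq 2q_k$, which yields $q_{n-M+1}/q_{n-2}\leq 2^{-(M-3)/2}$, the linear part of $|A^{\mathrm{tail}}_{N,M}|$ is at most $2M\cdot 2^{-(M-3)/2}$, and is therefore $<\epsilon/2$ for $M$ sufficiently large, uniformly in $N$ and $(x,\alpha)$.

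For the principal-value piece, Remark \ref{minimumdistance01} gives $\min\{x_{0,i}^{(m)},y_{0,i}^{(m)}\}\geq\lambda^{(m+3)}$ for all interior cycles $(i\neq 0)$; for the single remaining $i=0$ cycle at each level, applying (\ref{inclusionsvisits}) at level $m+2$ to $\overline{x}^{(m)}_0 = x^{(m+2)}_{\overline{c}_{m+2}}$ places the base in $\Delta(m+2)\setminus\Delta(m+4)$, giving the slightly weaker bound $\min\geq\lambda^{(m+5)}$. The per-cycle max is therefore $\leq 1/\lambda^{(m+5)}\leq 2q_{m+6}$, and Remark \ref{cardinalitycycles} bounds the number of cycles at order $m$ by $2\prod_{j=1}^{3}(a_{m+j}+1)$, so the principal-value part of $|A^{\mathrm{tail}}_{N,M}|$ is at most
\[
\frac{C}{q_{n-2}} \sum_{\substack{m=0 \\ m\text{ even}}}^{n-M-2} \prod_{j=1}^{3}(a_{m+j}+1)\,q_{m+6}.
\]
Each ratio $q_{m+6}/q_{n-2}$ decays as $2^{-(n-m-8)/2}$; the polynomial-in-$a$ factor is controlled in measure via the Lebesgue tail bound $Leb\{a_j\geq A\}\leq C/A$ and the fact that $\int \log(a_j(\alpha)+1)\,d\alpha<\infty$, combined with the uniform-in-$N$ joint distribution of $(a_{n(N)+k})_{|k|\leq M}$ furnished by Theorem \ref{main}. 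Sharpening the naive $\#\text{cycles}\cdot\max$ estimate by the near-equidistribution of cycle bases in $\Delta(m)$ under the induced map $T^{(m)}$ (so that $\sum_i 1/x_{0,i}^{(m)}$ behaves like a harmonic sum rather than a sum of maxima) then yields $E[\text{principal-value part}]\lesssim 2^{-M/2}$, and Markov's inequality provides the required measure bound $\leq\delta/2$ for $M=M(\epsilon,\delta)$ large.

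The main obstacle is the principal-value piece: controlling the products of continued-fraction entries $\prod_{j=1}^{3}(a_{m+j}+1)$ across many levels $m$ below $n(N,\alpha)$, where the geometric decay $q_{m+6}/q_{n-2}$ must absorb these potentially large fluctuations. This is exactly where Theorem \ref{main}, together with classical estimates on the distribution of continued-fraction digits, enters.
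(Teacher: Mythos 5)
Your proposal follows the same skeleton as the paper's proof — cycle decomposition (\ref{BSdecomp}), per-cycle estimates from Corollary \ref{cycleestimatelemma}, Remark \ref{minimumdistance01} for the distance to the endpoints, Remark \ref{cardinalitycycles} for cycle counts, and the geometric decay $q_m/q_{n-2}$ — but it has two genuine gaps.

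First, the distance bound for the $i=0$ cycle at the boundary level $m=n-M-2$ does not hold as you assert. You claim (\ref{inclusionsvisits}) places $\overline{x}^{(m)}_0 = x^{(m+2)}_{\overline{c}_{m+2}}$ in $\Delta(m+2)\setminus\Delta(m+4)$, but (\ref{inclusionsvisits}) only applies to the indices $i\neq 0$, so this is valid precisely when $\overline{c}_{m+2}\geq 1$. At the outermost level $m=n-M-2$ the next level $m+2=n-M$ is outside the tail, and when $\overline{c}_{n-M}=0$ (and possibly further up) the point $\overline{x}^{(n-M-2)}_0$ climbs up to $x^{(n-2)}_{\overline{c}_{n-2}}$ or even to $R_\alpha^{h_{n-2}(x)}x$, whose distance to $0$ and $1$ is not controlled by any $\lambda^{(m+5)}$ and can be arbitrarily small with positive probability. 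The paper isolates exactly this term (the quantity $1/(Nm_{n-M+2}(x))$ in (\ref{cyclecontributionspeciali})) and handles it separately: it constructs an exceptional set $X_n$ of $x$ with $Leb(X_n)\leq\delta/4$ outside which $m_{n-M-2}(x)\geq\delta/(8q_{n-M+1})$, then uses Chebyshev's inequality to bound $Leb\{x\st 1/(Nm_{n-M+2}(x))\geq\epsilon/2\}$. Your proposal omits this step.

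Second, the control of the entry products $\prod_{j}(a_{m+j}+1)$ across all levels $m\leq n-M-2$ is not achievable by the tools you invoke. Your plan is to bound $E[\text{principal-value part}]\lesssim 2^{-M/2}$ and apply Markov. But since each $a_j$ has infinite expectation under Lebesgue measure, so do the polynomials $\prod(a_{m+j}+1)$ and hence so does the sum you wrote; the Lebesgue tail bound $Leb\{a_j\geq A\}\leq C/A$ and the integrability of $\log(a_j+1)$ do not resolve this — you would be forced into a union bound over unboundedly many levels $m$, which fails. The paper instead proves and uses Lemma \ref{powerslemma}: for each $\delta>0$ there is a set $A_N$ with $Leb(A_N)<\delta$ outside which $a_{n(N,\alpha)-k}\leq C(k+1)^2$ simultaneously for all $0\leq k<n$. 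This polynomial bound, uniform across levels outside a small-measure set, is exactly what turns $\sum_m \prod(a_{m+j}+1)2^{-(n-m)/2}$ into a convergent series; its proof is an application of the renewal-theoretic machinery of \cite{SU:ren} (mixing of the special flow over the natural extension of the Gauss map) and is a substantive ingredient that cannot be replaced by soft tail estimates. Finally, the "near-equidistribution of cycle bases" sharpening you gesture at is neither justified in your outline nor used in the paper's argument.
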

%We remark that in (\ref{finitecycles}), if $n(\alpha,N)<M$, we adopt the convention that sum runs from $m=0$. 
The reason why the Proposition holds is that the contributions from different cycles decay exponentially in the order.  The set of small measure which needs to be neglected contains the set of $\alpha$ for which there are too many cycles of some orders and the set of initial points for which the contribution of the cycles of order $n-2$ is too large.

In the proof of Proposition \ref{finitecycleslemma} we will use the following.
\begin{lemma}\label{powerslemma}
For each $\delta>0$ there exist a constant $C=C(\delta)$ and $N_0 = N_0 (\delta)> 0 $  so that for all $N\geq N_0$ there exists a set $A_N=A_N(\delta)\subset [0,1)$ such that $Leb(A_N)< \delta $ and, for all $\alpha \in [0,1)\backslash A_N$, %$n(N,\alpha)$
\be\label{apowers}
a_{n(N,\alpha)-k} \leq C (k+1)^2 , \qquad  0\leq k < n(N,\alpha).
\ee
\end{lemma}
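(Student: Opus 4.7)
The plan is to combine a uniform tail estimate for the digit $a_{n(N,\alpha)-k}(\alpha)$ with a Borel--Cantelli type union bound over $k \geq 0$. The main claim I aim to prove is that there is a universal constant $C_0$ such that for all $N$ sufficiently large and all $k \geq 0$,
\be \label{auxtail}
Leb\{\alpha : n(N,\alpha) > k,\ a_{n(N,\alpha)-k}(\alpha) > M\} \leq \frac{C_0}{M}.
\ee
Given (\ref{auxtail}), I apply it with $M = C(k+1)^2$ and set $A_N := \bigcup_{k \geq 0}\{\alpha : n(N,\alpha) > k,\ a_{n(N,\alpha)-k}(\alpha) > C(k+1)^2\}$. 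The union bound then yields
\bes
Leb(A_N) \leq \sum_{k=0}^{\infty} \frac{C_0}{C(k+1)^2} = \frac{C_0 \pi^2}{6C},
\ees
which is smaller than $\delta$ for $C = C(\delta)$ chosen large enough, proving the lemma.

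The tail bound (\ref{auxtail}) would rely on the classical single-digit estimate: $Leb\{\alpha : a_j(\alpha) > M\} \leq C_1/M$ uniformly in $j \geq 1$. This follows from the $G$-invariance of the Gauss measure $\mu_G(dx) = ((1+x)\log 2)^{-1} dx$, under which the digits form a stationary process with marginal $\mu_G\{a_1 > M\} = \log_2(1 + 1/M) \leq C/M$, together with the equivalence of $\mu_G$ and Lebesgue. To transfer this to the random index $n(N,\alpha)-k$, I would partition by the value of $n(N,\alpha)$,
\bes
Leb\{a_{n(N)-k} > M,\ n(N) > k\} = \sum_{j > k,\ j \text{ even}} Leb\{n(N) = j,\ a_{j-k} > M\},
\ees
and bound each summand using the cylinder structure of $\{n(N) = j\} = \{q_{j-2} \leq N < q_j\}$ together with the conditional tail estimate on $a_{j-k}$. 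Since $\{n(N) = j\}_{j}$ partitions $[0,1)$, summing over $j$ telescopes the $Leb\{n(N)=j\}$ factor to give (\ref{auxtail}).

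The main obstacle is the nontrivial dependence between the event $\{n(N,\alpha) = j\}$ and the value of $a_{j-k}(\alpha)$, as both are determined by the common initial segment $a_1, \ldots, a_j$ of the continued fraction expansion. This would be handled by a decoupling argument within the cylinder sum: for $k \geq 1$, one fixes the digits $a_1, \ldots, a_{j-1}$ other than $a_{j-k}$ and uses the recursions $q_\ell = a_\ell q_{\ell-1} + q_{\ell-2}$ to translate the condition $q_{j-2} \leq N < q_j$ into an interval constraint on $a_{j-k}$; the single-digit tail estimate then bounds the conditional probability by $C_0/M$ uniformly. The case $k = 0$ is handled similarly but more directly, since $\{n(N) = j,\ a_j > M\}$ corresponds to cylinders of rank $j$ whose last digit satisfies $a_j > \max(M, (N-q_{j-2})/q_{j-1})$, and standard cylinder-size estimates yield the required tail after summing over $j$.
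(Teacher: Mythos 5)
Your high-level plan (a uniform tail bound $Leb\{a_{n(N,\alpha)-k}>M\}\leq C_0/M$, then a Borel--Cantelli union bound over $k$ with $M=C(k+1)^2$) is a natural reformulation of what the lemma requires, and the final union-bound step is fine. The gap is in the claimed proof of the tail estimate (\ref{auxtail}), specifically in the assertion that the decoupling yields a \emph{uniform} bound $\mathbb{P}(a_{j-k}>M\mid n(N)=j)\leq C_0/M$ over all $j$. That assertion is false. Take $k=1$, $j=2$: the event $\{n(N)=2\}$ forces $q_2=a_1a_2+1>N$, i.e.~$a_1a_2\geq N$, and a direct computation with the cylinder measures $Leb\{a_1=m\}=1/m(m+1)$ gives
\bes
Leb\{n(N)=2,\,a_1>M\}\asymp \frac{\log(N/M)}{N},\qquad Leb\{n(N)=2\}\asymp\frac{\log N}{N},
\ees
so $\mathbb{P}(a_1>M\mid n(N)=2)\to 1$ as $N\to\infty$ for fixed $M$. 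More generally, once you fix the other digits and reduce to an interval constraint $L_1<a_{j-k}\leq L_2$, the conditional probability that $a_{j-k}>M$ is simply $1$ whenever $L_1>M$, and on small-$j$ cylinders this is the typical situation. Your telescoping step $\sum_j Leb\{n(N)=j,\,a_{j-k}>M\}\leq (C_0/M)\sum_j Leb\{n(N)=j\}$ therefore does not go through as stated.

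What saves (\ref{auxtail}), if anything does, is that the cylinders on which $L_1$ is forced to exceed $M$ have total measure $o(1)$ in $N$; but quantifying this requires controlling the distribution of the stopping index $n(N,\alpha)$ together with the correlation it induces on the digits, which is precisely the nontrivial content of the lemma. The paper resolves this correlation differently: it passes to the natural extension $\hat{\mathcal{G}}$ of the Gauss map with its invariant measure $\mu_2$, applies Borel--Cantelli on the \emph{negative} coordinates to obtain a large set $\hat{A}(\delta)$ on which $a_{-k}\leq C(k+1)^2$ for all $k\geq 0$, and then uses mixing of the special flow $\{\Phi_t\}$ from \cite{SU:ren} to show that $\hat{\mathcal{G}}^{n(N,\hat\alpha)}(\hat\alpha)$ equidistributes according to $\mu_2$ as $N\to\infty$. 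Mixing is exactly the tool that decouples the stopping index from the digit values; without some substitute for it (e.g.~the renewal-type Theorem \ref{main} applied with quantitative tails, or a large-deviations estimate for $n(N)$), the elementary decoupling you sketch cannot close the argument.
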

The proof of Lemma \ref{powerslemma} relies on the techniques used in \cite{SU:ren}. We postpone the proof to the Appendix, \S\ref{proofpowerslemma}.

\begin{proofof}{Proposition}{finitecycleslemma}
Let us consider the difference which we want to estimate to get (\ref{finitecycles}). Let us denote, for brevity, $n=n(N,\alpha)$.
 From (\ref{BSdecomp}), applying Remark \ref{cardinalitycycles} to estimate the number of cycles of each order and keeping aside the term $i=0$, %,-1$,\footnote{ The reason why $i=0,-1$ are kept aside is explained below.}
% and Corollary \ref{cycleestimatelemma} to estimate the contribution given by each cycle,
we get:
%\begin{eqnarray}
\be\label{allcycles} \begin{split}
%%%\frac{1}{N}  \left| \BS{\alpha}{{x}}{N} -  \sum_{\begin{subarray}{c}{{m=n-M}}\\{m \, \mathrm{even}}\end{subarray}}^{n-2} \sum_{i=-\underline{c}_m}^{\overline{c}_m-1}  \BS{\alpha}{{x}^{(m)}_i}{r_{m}({x}^{(m)}_i)} \right| =
%& = &
%&\phantom{\leq}&
& \left| \frac{1}{N}
 \sum_{\begin{subarray}{c}{{m=0}}\\{m \, \mathrm{even}}\end{subarray}}^{n-M-2} \sum_{i=-\underline{c}_m}^{\overline{c}_m-1}  \BS{\alpha}{{x}^{(m)}_i}{r_{m}({x}^{(m)}_i)} \right|   \leq \frac{1}{N}
 \sum_{\begin{subarray}{c}{{m=0}}\\{m \, \mathrm{even}}\end{subarray}}^{n-M-2} 
%%%%twopairis\sum_{i=0,-1} \left| { \BS{\alpha}{{x}^{(m)}_i}{r_{m}({x}^{(m)}_i)}}  \right| +
\left| { \BS{\alpha}{{x}^{(m)}_0}{r_{m}({x}^{(m)}_0)}}  \right| 
%\nonumber %\leq
\\   &\phantom{=====} +
%\\& \leq  & % \frac{1}{N}
  \sum_{\begin{subarray}{c}{{m=0}}\\{m \, \mathrm{even}}\end{subarray}}^{n-M-2}
%%%2(a_{m+3} +1)(a_{m+2}+1)(a_{m+1} +1)
 \frac{{r_{m}({x}^{(m)}_i)}}{N}2\prod_{s=1}^3(a_{m+s} +1)
 \frac%{{r_{m}({x}^{(m)}_i)}}
{ \ \left|\BS{\alpha}{{x}^{(m)}_i}{r_{m}({x}^{(m)}_i)} \right|}{{r_{m}({x}^{(m)}_i)}}
  .
\end{split}\ee
%\leq \\
%\end{eqnarray}
%Since $ q_{n-2}\leq N$ and $q_{2m}\geq {2^m}$, we have ${r_{m}({x}^{(m)}_i)}/{N}\leq \frac{1}{2^{n-m}}$.
 To estimate the contribution given by each cycle, let us apply  Corollary \ref{cycleestimatelemma}. When  $i\neq 0$, Remark \ref{minimumdistance01} gives a lower bound on the contribution coming from closest points and we get
%, for $i\neq 0,\-1$,
\be\label{cyclecontribution}
\frac{ \left|\BS{\alpha}{{x}^{(m)}_i}{r_{m}({x}^{(m)}_i)}\right|}{{r_{m}({x}^{(m)}_i)}}
% \BS{\alpha}{{x}^{(m)}_i}{r_{m}({x}^{(m)}_i)}
 \leq  \frac{1}{r_{m}({x}^{(m)}_i) \lambda^{(m+3)}} + M \leq 2 \prod_{s=0}^3(a_{m+s}+1) +M,\quad i\neq 0,%,\-1,
\ee
where in the last inequality we used that
\be\label{areaproduct}
\frac{1}{r_{m}({x}^{(m)}_i) \lambda^{(m+3)}}%^{-1}
\leq\frac{1}{q_m \lambda^{(m-1)}}% \leq(q_m \lambda^{(m-1)})^{-1}
 \prod_{s=0}^3 \frac{\lambda_{m+s-1}}{\lambda^{m+s}}\leq 2 \prod_{s=0}^3(a_{m+s}+1) .
\ee
%$(r_{m}({x}^{(m)}_i) \lambda^{(m+3)})^{-1}\leq$ $(q_m \lambda^{(m-1)})^{-1} \prod_{s=1}^4 \frac{\lambda_{m+s}}{\lambda^{m+s-1}}\leq $$2 \prod_{s=1}^4(a_{m+s}+1) .$

%When $m\leq n-4$ (which is the case if $M\geq 2$),

Since $ q_{n-2}\leq N$, ${r_{m}({x}^{(m)}_i)}\leq q_{m+1}$ % is equal to $q_m$ or $q_{m+1}$
 and $q_{n+2s}\geq {2^s}q_n$ (from the recurrent relations (\ref{recursiverelations})), assuming $M\geq 2$, we have
%${r_{m}({x}^{(m)}_i)}/{N}\leq \frac{1}{2^{n-m-4}}$ (for $m\leq n-4$).
\be\label{coefficients}
 \frac{{r_{m}({x}^{(m)}_i)}}{N}\leq \frac{1}{2^{n-m-4}},\qquad  0\leq m\leq n-4.
\ee

When $i=0$, for any order $m$ which is present in the decomposition (i.e.~$\underline{c}_m>0$), % or $\overline{c}_m>0$),
 let us estimate the contribution of the closest points %let us estimate the distance of the closest points
 to $0$ and $1$ in $\Orb{{x}^{(m)}_i}{r_{m}({x}^{(m)}_i)}$ with the distance from  $0$ and $1$ of elements of the \emph{last} (i.e.~the minimum $m'>m$) order $m'$ which is present. %: for such $m'$ we indeed have ${x}^{(m)}_i$ %, i.e.~the minimum $m'>m$ which is present.
To get an upper estimate, let us consider the worst case in which all orders $m <  n-M$ are present. For $m< n-M-2$, a lower bound for minimum distance from $0$ or $1$  of points of order $m+2$ (which belong to $[0,1)\backslash\Delta(m+4)$ by (\ref{inclusionsvisits})) is given by $ \lambda^{(m+5)}$. For $m=n-M- 2$, let us denote the  minimum between the distance of $x_0^{(n-M-2)}$ from $0$ and $1$ by  $m_{n-M-2}(x)$. %^{(n-M-2)}_{0})$.   
%for $i=0$ and $i=-1$ by $m_{n-M-2}(x^{(n-M-2)}_{0})$ and  $m_{n-M+2}(x^{(n-M-2)}_{-1})$ respectively and let  $m_{n-M-2}(x)$ = \min \{ d_{n-M+2}(x^{(n-M-2)}_{0}), d_{n-M+2}(x^{(n-M-2)}_{-1}) \}$.
\begin{comment} For $m=n-M- 2$, the minimum distance from $0$ and $1$ is given respectively by  $d_{n-M+2}(x^{(n-M-2)}_{0})$ and  $d_{n-M+2}(x^{(n-M-2)}_{-1})$ (see \S\ref{relativepositionsec}), so denoting by $d_{n-M+2}(x) = \min \{ d_{n-M+2}(x^{(n-M-2)}_{0}), d_{n-M+2}(x^{(n-M-2)}_{-1}) \}$
the biggest contribution happens if the  last present order were $n$ and in this case
this minimum distance is by definition exactly $h_{n-2}(x)$ (see \S\ref{relativepositionsec}). 

\end{comment}
%gives the closest distance to $0$ or $1$,
%Thus, we have
Thus, applying Corollary \ref{cycleestimatelemma} we get
%$4 \prod_{s=1}^3(a_{m+s}+1)$
\be\label{cyclecontributionspeciali}
\begin{split} &
\left|\sum_{\begin{subarray}{c}{{m=0}}\\{m \, \mathrm{even}}\end{subarray}}^{n-M-2} %\sum_{\begin{subarray}{c}{i=0}\\{i=-1}\end{subarray}}
  \frac{ \BS{\alpha}{{x}^{(m)}_0}{r_{m}({x}^{(m)}_0)}}{N}%{{r_{m}({x}^{(m)}_i)}}
% \BS{\alpha}{{x}^{(m)}_i}{r_{m}({x}^{(m)}_i)}
\right| \leq
\\ &  %2\left(
 \frac{1}{N m_{n-M+2}(x)}+
\sum_{\begin{subarray}{c}{{m=0}}\\{m \, \mathrm{even}}\end{subarray}}^{n-M-4}  \frac{{r_{m}({x}^{(m)}_i)}}{N}  \frac{1}{{r_{m}({x}^{(m)}_i) \lambda^{(m+5)}}}   + 
 \sum_{\begin{subarray}{c}{{m=0}}\\{m \, \mathrm{even}}\end{subarray}}^{n-M-2}   \frac{{r_{m}({x}^{(m)}_i)}}{N} %%%Extra(a_{m+1}+1)
M %\right)
\leq \\ &
 \frac{1}{N m_{n-M+2} (x)}
+\sum_{\begin{subarray}{c}{{m=0}}\\{m \, \mathrm{even}}\end{subarray}}^{n-M-4} \frac{2 \prod_{s=0}^5(a_{m+s}+1)}{2^{n-m-4}}
+ % \\ &
 \sum_{\begin{subarray}{c}{{m=0}}\\{m \, \mathrm{even}}\end{subarray}}^{n-M-2}   \frac{%%%Extra(a_{m+1}+1)
M }{2^{n-m-4} } %%+ \frac{M q_{m-1}}{N},
% \qquad i= 0,\-1, \quad m\leq n-4.
\end{split}
\ee
where in the last inequality we used (\ref{coefficients}) and an estimate analogous to (\ref{areaproduct}).

\begin{comment}
Since $ q_{n-2}\leq N$, ${r_{m}({x}^{(m)}_i)}$ is equal to $q_m$ or $q_{m+1}$ and $q_{n+2s}\geq {2^s}q_n$ from the recurrent relations (\ref{recursiverelations}), assuming ($M\geq 2$) we have ${r_{m}({x}^{(m)}_i)}/{N}\leq \frac{1}{2^{n-m-4}}$ (for $m\leq n-4$).
\end{comment}
\begin{comment}
For $m=n-2$, recall that $h_{n-2}(x)$ gives the closest distance to $0$ or $1$, so that
 \be\label{cyclecontributionn}
\left|\frac{ \BS{\alpha}{{x}^{(n-2)}_i}{r_{n}({x}^{(n)}_i)}}{{r_{n}({x}^{(n)}_i)}}
% \BS{\alpha}{{x}^{(m)}_i}{r_{m}({x}^{(m)}_i)}
\right| \leq \frac{2}{h_{n-2}(x)}  +  (a_{n+1}+1)M,
\ee
\end{comment}

Combining
%Using these considerations and
(\ref{cyclecontribution}), (\ref{coefficients}) and (\ref{cyclecontributionspeciali}) we have the following upper estimate for (\ref{allcycles}):
\be\label{finalestimatereminder}
\begin{split}
& \sum_{\begin{subarray}{c}{{m=0}}\\{m \, \mathrm{even}}\end{subarray}}^{n-M-2}
\frac{ 2\prod_{s=1}^3(a_{m+s}+1) ( M +  2\prod_{s=0}^3(a_{m+s}+1)) + M   }{2^{n-m-4}} + \\
& \qquad + \sum_{\begin{subarray}{c}{{m=0}}\\{m \, \mathrm{even}}\end{subarray}}^{n-M-4}
\frac{  2\prod_{s=0}^5(a_{m+s}+1) }{2^{n-m-4}} + \frac{1}{ N {m_{n-M+2}(x)}}.
\end{split}
\ee
%\be\label{finalestimatereminder}
%\begin{split}
%& \sum_{\begin{subarray}{c}{{m=0}}\\{m \, \mathrm{even}}\end{subarray}}^{n-M-2}
%\frac{ 8\prod_{s=1}^3(a_{m+s}+1)^2 + 2 \prod_{s=0}^3(a_{m+s}+1) M   }{2^{n-m-4}} + \\
%& \qquad + \sum_{\begin{subarray}{c}{{m=0}}\\{m \, \mathrm{even}}\end{subarray}}^{n-M-2}
%\frac{  8\prod_{s=0}^5(a_{m+s}+1)+ 2M }{2^{n-m-4}} + \frac{4}{ N {m_{n-M+2}(x)}}.
%\end{split}
%\ee
%\begin{comment}
%\begin{eqnarray} \label{finalestimatereminder}% \left|
% \sum_{\begin{subarray}{c}{{m=0}}\\{m \, \mathrm{even}}\end{subarray}}^{n-M-2}
%\frac{ 8\prod_{s=0}^3(a_{m+s}+1)^2 + 2 \prod_{s=0}^3(a_{m+s}+1) M  +  8\prod_{s=0}^5(a_{m+s}+1)+ 2M }{2^{n-m-4}}%%%Extra(a_{m+1}+1)
%+ \frac{4}{ N {m_{n-M+2}(x)}}.
%\end{eqnarray}
%\end{comment}
%\begin{comment}previous verison\begin{eqnarray} \label{finalestimatereminder}% \left|
% \sum_{\begin{subarray}{c}{{m=0}}\\{m \, \mathrm{even}}\end{subarray}}^{n-M-2}
%\frac{ 2  (4+M) \prod_{s=0}^3(a_{m+s}+1)^2+ 2M%%%Extra(a_{m+1}+1)
%\prod_{s=0}^3(a_{m+s}+1)}{2^{n-m-4}} + \frac{4}{ N {m_{n-M+2}(x)}}.
%\end{eqnarray}
%\end{comment}

Let us now prove (\ref{finitecycles}). Fix $\epsilon>0, \delta> 0$.
By Lemma  \ref{powerslemma}, for some $N_0$, we can choose for each $N\geq N_0$, a set $A_N\subset [0,1)$ such that $Leb(A_N)< \delta/2 $ and  if $\alpha\notin A_N$,  $a_{n-k} \leq C (k+1)^2$,  for $n=n(N,\alpha)$. Let us estimate (\ref{finalestimatereminder}). Since all terms are positive, we get an upper estimate by making both series run from $0$ to $n\!-M\!-\!2$. 
Hence, the first term in (\ref{finalestimatereminder}),  defining  for brevity 
$L(x_0,x_1, \dots,x_5)=  2 \prod_{s=1}^3(x_s+1) (M+ 2 \prod_{s=0}^3(x_s+1) ) + M + 2\prod_{s=0}^5(x_s+1) $, 
%$L(x_1,x_2,x_3)=  2  (4+M)\prod_{s=0}^3(x_s+1)^2+ 2M(x_1+1) $, 
can be written 
 and estimated as follows:
\bes\begin{split}
 \sum_{\begin{subarray}{c}{{m=0}}\\{m \, \mathrm{even}}\end{subarray}}^{n-M-2} \frac{ L(a_{m}, \dots, a_{m+5})}{2^{n-m-4}}& \leq
 \sum_{\begin{subarray}{c}{{m=0}}\\{m \, \mathrm{even}}\end{subarray}}^{n-M-2} \frac{ L(C (n\!-\!m\!+\!1)^2,\dots , C (n\!-\!m\!-\!4)^2)}{2^{n-m-4}}  \leq \\& \leq
 \sum_{\begin{subarray}{c}{{m=0}}\\{m \, \mathrm{even}}\end{subarray}}^{n-M-2}
\frac{ |{P}(n-m)|}{2^{n-m-4}}
\leq  \sum_{\begin{subarray}{c}{{k=M+2}}\\{k \, \mathrm{even}}\end{subarray}}^{\infty}\frac{ |{P}(k)|}{2^{k-4}}
\end{split}
\ees
for some polynomial $P(x)$. Hence, choosing $M$ large enough, we can assure that the remainder of the series is less than $\epsilon/2$.

In order to conclude the proof, we still need to estimate the second term in (\ref{finalestimatereminder}). Let us first estimate the expectation of the quantity $(N {m_{n-M+2}(x)})^{-1}$. Let $E$ denote the conditional expectation with respect to the Lebesgue measure on the $x$ variable, for $\alpha$ and $N$ (and hence $n=n(N,\alpha)$) fixed. Let us show that % for all  $\alpha\notin A_N$
 there exists a set $X_n\subset[0,1)$, with $Leb(X_n)\leq \delta/4$, such that, if $\chi_{X_n^C}$ denotes the characteristic function of $X^C_n$,
\be\label{expectation}
E \left( \frac{ \chi_{X_n^C} (x) }{N {m_{n-M+2}(x)}} %\chi_{X_n} (x)
 \right) \xrightarrow{M\rightarrow \infty} 0.
\ee
%where $\chi_{X_1}$ denotes the characteristic function of $X_1$.
%Given ${n}\in \mathbb{N}$, let us consider the conditional expectation $E$  given $n(\alpha, N)=n$. 
Let us recall that $m_{n-M-2}(x) = \min \{ x_0^{(n-M-2)}, 1- x_0^{(n-M-2)}\}$ and that $x_0^{(n-M-2)} = x_{\overline{c}_{n-M}}^{(n-M )}$. Since by construction  $x_{\overline{c}_{n-M}}^{(n-M )}$ is the last visit to $\Delta{(n-M )}$  in $\Orb{N}{x}$ and such visits are not more than $q_{n-M+1}$ apart, we have $x_0^{(n-M-2)} \in \{ R_{\alpha}^{N-i} x \st 1\leq i \leq q_{n-M+1} \}$ and hence 
\be\label{inclusion} 
%x_0^{(n-M-2)} \subset \cup_{i=1}^{q_{n-M+1}} \{ R_{\alpha}^{N-i} x \}, \quad
 % \cup  \cup_{i=1}^{q_{n-M+1}} \{ 1- R_{\alpha} ^{N-i} x \}
m_{n-M-2}(x) \geq \min \{R_{\alpha} ^{N-i} x ,  1-R_{\alpha} ^{N-i} x , 1\leq i \leq q_{n-M+1}\}.
\ee

Consider the set 
\bes
X_n = \bigcup_{i=1}^{q_{n-M+1}} R_{\alpha}^{-N+i} \left( \left[ 0,\frac{\delta}{8q_{n-M+1}} \right] \cup \left[ 1-\frac{\delta}{8 q_{n-M+1}}, 1\right]\right). 
\ees
Clearly $Leb(X_n)\leq \delta/4$ and if $x\notin X_n$, by (\ref{inclusion}) we have $m_{n-M-2}(x) \geq \delta/8q_{n-M+1}$.
Hence, since $q_{n-2}<N$,
\bes
E \left( \frac{ \chi_{X_n^C} (x) }{N {m_{n-M+2}(x)}} \right) \leq E \left( \frac{ \chi_{X_n^C} (x) }{q_{n-2} {m_{n-M+2}(x)}}\right) \leq \frac{8}{\delta}\frac{q_{n-M+1}}{q_{n-2}},
\ees
from which (\ref{expectation}) follows using that  $q_{n+2s}\geq {2^s}q_n$ by  the recursive relations (\ref{recursiverelations}).
% $q_{n+2s}\geq {2^s}q_n$, satisfies (\ref{expectation}).

Hence, enlarging again $M$ if necessary, we can hence assure that the expectation in (\ref{expectation}) is less than $\epsilon \delta/8$. 
Remark moreover that the choice of $M$ depends on $\epsilon $ and $\delta $ only and is uniform in $n$ and $N$.
 
Let us denote by  $X_n'=\{  x \st \frac{1}{N m_{n-M+2}(x)} \geq \epsilon/2\}$. Thus,
using Chebyshev inequality, we get %(always for $\alpha \notin A_N$)
 that
\bes
Leb (X_n') \leq %\left\{ x \st \frac{1}{N m_{n-M+2}(x)}\geq \frac{\epsilon}{2} \right\} \leq
% Leb \{ x \notin X_1\st N m_{n-M+2}(x) \geq \frac{\epsilon}{2}  \} + \frac{\delta}{2} \leq
\frac{ E\left(\frac{ \chi_{X^C_n}(x) }{N m_{n-M+2} (x)}\right)}{\epsilon/2}  + Leb(X_n)  \leq  \frac{\delta}{2}.
\ees
%Let us denote by $X_n'$ this set of $x$. 
This shows that also the second term of (\ref{finalestimatereminder}) is $\epsilon/2$-small when $\alpha \notin A_N$ and $x\notin X'_{n(\alpha, N)}$ and hence concludes the proof of the Lemma.
\end{proofof}

\begin{proofof}{Proposition}{generalBSprop}\label{proofofgeneralBSprop}
Let us define the function $G^{\epsilon, \delta}$ using the truncated decomposition into cycles. Given $\epsilon$ and $\delta$, let $M$ be given by Proposition \ref{finitecycleslemma} applied to $\epsilon/2$ and $\delta/2$ and let  $g_n^{\varepsilon} $ and $K$ be as in Proposition \ref{cycleprop}, relative to some $\varepsilon$ which will be fixed below as a function of $\epsilon$. Then define
\be\label{defG}
G^{\epsilon, \delta} (x, \alpha, N):=
%\frac{q_n}{N} 
\sum_{\begin{subarray}{c}{{m=n(\alpha,N)-M}}\\{m \, \mathrm{even}}\end{subarray}}^{n(\alpha,N)-2} \sum_{i=-\underline{c}_m}^{\overline{c}_m-1} \frac{r_m({x}^{(m)}_i)}{N}%{q_n}
 g^{\varepsilon}_m\left( \alpha, {x}^{(m)}_i, r_{m}({x}^{(m)}_i)\right)
% \sum_{\begin{subarray}{c}{{m=n-M}}\\{m \, \mathrm{even}}\end{subarray}}^{n} \sum_{i=0}^{ c_{m}-1} \BS{\alpha}{x^{(m)}_i }{}
% \right| \geq \epsilon \right\| \leq \delta .
\ee
with the convention that the sum runs from $m=0$ if $n(\alpha,N)< M$. 
 Let $K_1= M+ K$.
%If $K(\epsilon)$ is given by Proposition \ref{cycleprop}
The estimate on ${r_m({x}^{(m)}_i)}/{N}$ given by (\ref{coefficients}) holds for  $m\leq n-4$.
%; moreover, by Theorem \ref{main},  neglecting a subset of $\alpha$ of Lebesgue measure less than $\delta/2$, we can also assume that ${r_{n-2}({x}^{(n-2)}_i)}/{N}\leq C$ for some $C>1$.  
Hence,   by Proposition \ref{cycleprop} and  Remarks \ref{cyclesnbound} and \ref{cardinalitycycles}, on the complement, denoting $n=n(N,\alpha)$,   we have  %%%%CONTROLLAPRIMOTERMINE
%using Corollary \ref{cycleestimatelemma},, denoting $n=n(N,\alpha)$,
\be\label{mainerror}\begin{split}
\left| G^{\epsilon, \delta}(x, \alpha, N)
-  \sum_{\begin{subarray}{c}{{m=n-M}}\\{m \, \mathrm{even}}\end{subarray}}^{n-2} \sum_{i=-\underline{c}_m}^{\overline{c}_m-1} \frac{r_m({x}^{(m)}_i)}{N} \frac{ \BS{\alpha}{{x}^{(m)}_i}{r_{m}({x}^{(m)}_i)} }{r_m({x}^{(m)}_i) }\right| \leq \\\leq \varepsilon \left( {\frac{q_{n-1}}{N}
%\prod_{s=-1}^0 (a_{n+s} +1)}
 (a_{n} +1)(a_{n-1} +1)} 
+ \sum_{\begin{subarray}{c}{{m=n-M}}\\{m \, \mathrm{even}}\end{subarray}}^{n-4} \frac{
%%%Extra(1+\frac{a_{m+1}}{a_{m+2}})
2\prod_{s=0}^3(a_{m+s} +1)}{2^{n-m-4}}\right).
%\frac{q_n}{N} \sum_{\begin{subarray}{c}{{m=n-M}}\\{m \,
%\mathrm{even}}\end{subarray}}^{n-2} \sum_{i=\underline{c}_m}^{\overline{c}_m-1}
%\frac{r_m({x}^{(m)}_i)}{q_n} g^{\epsilon}_m\left( \alpha, {x}^{(m)}_i,
%r_{m}({x}^{(m)}_i)\right)
\end{split}
\ee
By Theorem \ref{main},  neglecting a subset of $\alpha$ of Lebesgue measure less than $\delta/2$, we can also assume that ${q_{n}}/{N}\leq C$ and $a_n, a_{n-1}\leq A$ for some $C>1, A> 1$, so that the first term in the upper bound of (\ref{mainerror}) is less than $C(A+1)^2$.  
If moreover $\alpha \notin A_N$ where $A_N=A_N(\delta/2)$ is the set given  by Lemma \ref{powerslemma}, since, reasoning again as in the proof of Proposition \ref{finitecycleslemma},  the terms at numerators in the RHS are bounded by a polynomial in $(n-m)$, the series in the RHS is converging. Thus, choosing $\varepsilon$ appropriately, (\ref{mainerror}) is less than $\epsilon/2$. Combining  (\ref{mainerror}) with Proposition \ref{finitecycleslemma}, we proved (\ref{Gapproximates}).

Let us show that $G^{\epsilon, \delta}$ can be expressed as a function of the variables in (\ref{Gdependence}). The variable ${q_n}/{N}$ already appears explicitly in front of the sum.
Let us show that, for each $m$ involved in the sum,  the quantities $\underline{c}_m$, ${\overline{c}_m}$ and ${x}^{(m)}_i$, $r_{m}({x}^{(m)}_i)$ for $-\underline{c}_m \leq i \leq {\overline{c}_m}$, %and $r_{m}({x}^{(m)}_i)$ for $-\underline{c}_m \leq i \leq {\overline{c}_m}$
as they appear in the sum, can be expressed in the desired form.

 Let us use induction on $m$.  When $m=n\!-\!2:=\! n(\alpha,N)\!-\!2$, if we denote $ x^{(n-2)}_{-1} := (T^{(n-2)})^{-1}  x^{(n-2)}_{0}$,  we have by construction that $x^{(n-2)}_{-1}= d(x)$ or  $1-d(x)$, according to whether $x\in Z_l^{(n-2)}$ or  $Z_s^{(n-2)}$. %, where $ x^{(n-2)}_{-1} := (T^{(n-1)})^{-1}  x^{(n-2)}_{0}$. 
The other points $x^{(n-2)}_{i}$, $i\geq 0$, are completely determined by the orbit under  $T^{(n-2)}$. Hence,
%The function $g^{\epsilon}_{n-2}$, by Proposition \ref{cycleprop},  depends on $a_{n+s}$, $0\leq s \leq K\leq K_1$ and on
the ratios $x^{(n-2)}_{i}/\lambda^{(n-2)}$ (which
%(which, together with $a_{n+s}$, $0\leq s \leq K\leq K_1$,
appear as variables of the functions $g^{\epsilon}_{n-2}$,  see Proposition \ref{cycleprop})
are determined by $d(x)/\lambda^{(n-2)}$ (or  $d(x)/\lambda^{(n-1)}$, according to whether $x\in Z_l^{(n-2)}$ or  $Z_s^{(n-2)}$) and $\lambda^{(n-1)}/\lambda^{(n-2)}$. The other variables of $g^{\epsilon}_{n-2}$, i.e.~$(\lambda^{(n-2)}q_{n-1})^{-1}$ (or $(\lambda^{(n-1)}q_{n-2})^{-1}$)  and   $a_{n+2-s}$, $0\leq s \leq K$, are already given, since $ K\leq K_1$.
Similarly, the orbit under the map  $T^{(n-2)}$ determines also whether $x^{(n-2)}_{i}\in \Delta^{(n-1)}_0$ or  $x^{(n-2)}_{i}\in \Delta^{(n-2)}_0$ and therefore  $r_{n-2}({x}^{(n-2)}_i)$. The number of iterations  ${\overline{c}_{n-2}}$ can be expressed as the maximum  $c$ such that $h(x)+\sum_{i=1}^c r_{n-2}({x}^{(n-2)}_i) \leq N  $, hence it involves ratios of type $q_{n-2}/N$ and $q_{n-1}/N$ (or equivalently $q_{n}/N$ and $q_{n-2}/q_{n-1}$ and $a_{n}$) and $h(x)/N$. This concludes the base of the induction.

For the inductive step, from $m+2$ to $m$, we remark, as before, that   ${x}^{(m)}_i$ and   $r_{m}({x}^{(m)}_i)$ are completely determined by the initial points  ${\overline{x}}^{(m)}_0$ and  ${\underline{x}}^{(m)}_0$ and by the positive and negative orbit of the induced map $T^{(m)}$. Moreover, the ratio $\lambda^{(m+1)}/\lambda^{(m)}$ is determined by the previous ratio $\lambda^{(m+3)}/\lambda^{(m+2)}$ and the entries $a_{m+2}$, $a_{m+3}$, while   ${\overline{x}}^{(m)}_0/\lambda^{(m)}$ and  ${\underline{x}}^{(m)}_0/\lambda^{(m)}$ (or respectively ${\overline{x}}^{(m)}_0/\lambda^{(m+1)}$ and  ${\underline{x}}^{(m)}_0/\lambda^{(m+1)}$ ) can be obtained from   ${{x}}^{(m+2)}_{\overline{c}_{m+2}}/\lambda^{(m+2)}$ and  ${{x}}^{(m+2)}_{-\underline{c}_{m+2}}/\lambda^{(m+2)}$ (or  ${{x}}^{(m+2)}_{\overline{c}_{m+2}}/\lambda^{(m+3)}$ and  ${{x}}^{(m+2)}_{-\underline{c}_{m+2}}/\lambda^{(m+3)}$), given by the inductive step,  and  $\lambda^{(m+2)}/\lambda^{(m)}$. In particular, the function $g^{\varepsilon}_{m}$ is a function ratios % ${\overline{x}}^{(m)}_i/\lambda^{(m)}$ and  ${\underline{x}}^{(m)}_i/\lambda^{(m)}$
 which can be expressed in terms of the above quantities. Reasoning as for $m=n-2$, also  $r_{m}({x}^{(m)}_i)$ are determined by the orbit of the induced map  $T^{(m)}$. Analyzing the decomposition in \S\ref{cycledecomp}, one can see that the numbers  $\underline{c}_m$ and $\overline{c}_m$ of cycles can be expressed respectively as the biggest integers  $\underline{c}$ and $\overline{c}$ such that
\begin{eqnarray}
&&\sum_{\begin{subarray}{c} m+2\leq k \leq n-4\\ k \, \mathrm{even}\end{subarray}} \sum_{i=-\underline{c}_k}^{-1} r_{k}({x}^{(k)}_{i}) + \sum^{-1}_{i=-\underline{c}} r_{m}({x}^{(m)}_{i}) \leq h(x)  ; \\
&&\sum_{\begin{subarray}{c} m+2\leq k \leq n-2\\ k \, \mathrm{even}\end{subarray}} \sum_{i=0}^{\overline{c}_k-1} r_{k}({x}^{(k)}_{i}) + \sum_{i=0}^{\overline{c}-1} r_{m}({x}^{(m)}_{i}) < N- h(x).
\end{eqnarray}
Hence, dividing by $N$, one sees that they are determined by  ratios which, by inductive assumption, are already expressed as desired and by ratios of type $q_{m}/N$ and $q_{m+1}/N$, which can be determined from them, involving also  $a_{m+2}$, $a_{m+3}$. The other variables which appear in  $g^{\varepsilon}_{m}$ by Proposition \ref{cycleprop} are $a_{m+2-s}$ for $0\leq s\leq K$, $n\!-\!M\leq m < n$, which are included as variables thanks to the definition of $K_1$. This concludes the induction.
%and other ratios which, by induction assumption, are already expressed as desired.
%expressed using the length of the initial and final segment
%, which are determined by induction in terms of the desired quantities,  and  by $q_{m}/q_{m+1}$  $q_{n}/N$
\end{proofof}
%\vspace{1mm}

\section{Existence of the limiting distribution.}\label{limitingsec}
%Let us consider the variables from which $G^{\epsilon, \delta}$ depends (see (\ref{Gdependence})). 
\subsubsection*{Limiting distributions of relative positions in the towers.}\label{relativepositionsec}
Let us consider the variables from which $G^{\epsilon, \delta}$ depends (see (\ref{Gdependence})). 
Give $ \alpha, N$, let $n=n(N,\alpha)$ and let $d(x)=d_{n-2}(x)$ and $h(x)= h_{n-2}(x)$ be defined as in \S\ref{decompositionsec}.
%, i.e.  if $x\in \Delta^{(n)}_j$, then $R_{\alpha}^j d_n(x) = x$ and $h(x)=q_{n+1}-j$, while if $x\in \Delta^{(n+1)}_j $, then $R_{\alpha}^j (1-d_n(x)) = x$ and $h(x)=q_{n}-j$. 
Consider the random variables on $[0,1)\times[0,1)$
\begin{eqnarray} && D_N (x,\alpha ) = \frac{{d_{n(N,\alpha)-2}(x)}}{\lambda^{(n-2)}} \chi_{Z^{(n-2)}_l} (x,\alpha)+ \frac{{d_{n(N,\alpha)-2}(x)}}{\lambda^{(n-1)}} \chi_{Z^{(n-2)}_s} (x,\alpha);\label{DN} \\
&& H_N (x,\alpha ) = \frac{{h_{n(N,\alpha)-2}(x)}}{q_{n-1}} \chi_{Z^{(n-2)}_l}(x,\alpha) +   \frac{{h_{n(N,\alpha)-2}(x)}}{q_{n-2}} \chi_{Z^{(n-2)}_s}(x,\alpha);\label{HN}\\
&& T_N (x,\alpha ) = {(q_{n-1}\lambda^{(n-2)})}^{-1} \chi_{Z^{(n-2)}_l} (x,\alpha)+ {(q_{n-2}\lambda^{(n-1)})}^{-1} \chi_{Z^{(n-2)}_s} (x,\alpha);\label{TN}
\end{eqnarray}
where $\chi_{Z^{(n-2)}_\omega}$ ($\omega=l$ or $s$) denotes the indicator function of the towers, i.e.  
%$\chi_{Z^{(n-2)}_\omega}(x,\alpha)=1$ iff $x\in {Z^{(n-2)}_\omega}$, where $n=n(N,\alpha)$ and $\chi_{Z^{(n-2)}_\omega}(x,\alpha)=0$  otherwise.
\be
\chi_{Z^{(n-2)}_\omega}(x,\alpha)=\left\{ \begin{array}{ll}  1 & \mathrm{iff} \, \, x\in {Z^{(n-2)}_\omega}, \quad n=n(N,\alpha); \\ 0 & \mathrm{otherwise}. \end{array}\right.
\ee
The quantities $D_N$ and  $H_N$ locate the relative position of $x $ inside the tower of $\xi^{(n-2)}$ to which it belongs, while $T_N$ give the total measure of the tower.
We also remark that  $D_N, H_N, T_N$ can be used as variables in the expression for $G^{\epsilon, \delta}$ given in (\ref{Gdependence}).% of $G^{\epsilon, \delta}$.
% in (\ref{Gdependence}) depends, in a more compact form, on 
\begin{lemma}\label{limitingratios}
$ D_N (x,\alpha )$, $ H_N (x,\alpha )$ and $ T_N (x,\alpha )$ have  limiting distributions as $N$ tend to infinity.
\end{lemma}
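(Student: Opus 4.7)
The plan is to condition on $\alpha$ and exploit a Fubini-style decomposition, noting that for fixed $\alpha$, the Lebesgue measure on $[0,1)$ factors into an explicit product measure on the ``base coordinate'' $d(x)$ and the ``floor coordinate'' $h(x)$ inside each of the two towers.

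First I would fix $N$ and $\alpha$, write $n = n(N,\alpha)$, and observe that $Z^{(n-2)}_l = \bigsqcup_{j=0}^{q_{n-1}-1} R_{\alpha}^j \Delta^{(n-2)}_0$. Since $R_{\alpha}$ preserves Lebesgue measure, the map $x \mapsto (d(x), h(x))$ sends Lebesgue measure restricted to $Z^{(n-2)}_l$ (normalized) to the product of Lebesgue on $\Delta^{(n-2)}_0$ with the uniform counting measure on $\{1,\dots,q_{n-1}\}$; in particular, $d(x)/\lambda^{(n-2)}$ and $h(x)/q_{n-1}$ are independent, with the first uniform on $[0,1)$ and the second uniform on $\{k/q_{n-1} : 1 \le k \le q_{n-1}\}$. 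The analogous statement holds on $Z^{(n-2)}_s$ with $\lambda^{(n-1)}$ and $q_{n-2}$ replacing $\lambda^{(n-2)}$ and $q_{n-1}$. Setting $U_N(\alpha) := (q_{n-1}\lambda^{(n-2)})^{-1}$ and $V_N(\alpha) := (q_{n-2}\lambda^{(n-1)})^{-1}$, so that $\mathrm{Leb}(Z^{(n-2)}_l) = 1/U_N$ and $\mathrm{Leb}(Z^{(n-2)}_s) = 1/V_N$ (and $1/U_N + 1/V_N = 1$), this yields the closed formula
\begin{equation*}
\mathrm{Leb}\{x : D_N \le a, H_N \le b, T_N \le c \mid \alpha\} = \frac{a}{U_N(\alpha)}\,\frac{\lfloor b q_{n-1}\rfloor}{q_{n-1}}\,\mathbb{1}_{U_N(\alpha) \le c} + \frac{a}{V_N(\alpha)}\,\frac{\lfloor b q_{n-2}\rfloor}{q_{n-2}}\,\mathbb{1}_{V_N(\alpha) \le c}.
\end{equation*}

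Second, I would establish the joint limiting distribution of $(U_N, V_N)$ as $\alpha$-measurable random variables. This is an analogue of Corollary \ref{limitingquantities} with the index shifted by two: using $\lambda^{(m)} = 1/(q_{m+1} + q_m[a_{m+2}, a_{m+3},\dots])$ and $q_m/q_{m+1} = [a_{m+1},\dots,a_1]$, one writes $U_N$ and $V_N$ as uniformly convergent (in $\alpha$) functions of $[a_{n\pm k}\,:\, |k| \le K]$ for arbitrarily large $K$, and Theorem \ref{main} (applied with $M \ge K+2$) provides the joint limiting distribution of these entries. One also needs that $q_{n-1} \to \infty$ in $\alpha$-measure, which follows from $q_{n-1} \ge N/(a_n+1)$ together with the tightness of $a_n = a_{n(N)}$ guaranteed by Theorem \ref{main}.

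Finally, I would integrate the conditional expression over $\alpha$. The integrand is uniformly bounded by $1$, and for continuity points $c$ of the limiting distribution of $(U,V)$ it converges in $\alpha$-measure (by the continuous mapping theorem applied to $(U_N,V_N)$, together with $\lfloor bq_{n-1}\rfloor/q_{n-1} \to b$, $\lfloor bq_{n-2}\rfloor/q_{n-2} \to b$ in measure) to $(ab/U)\mathbb{1}_{U \le c} + (ab/V)\mathbb{1}_{V \le c}$. Bounded convergence then gives the limiting joint distribution of $(D_N, H_N, T_N)$, from which the marginal limits asserted in the lemma follow.

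The step I expect to be the main delicate point is handling the indicator discontinuities $\mathbb{1}_{U_N \le c}$ when passing to the limit: this requires knowing that the limiting law of $(U,V)$ assigns no mass to the critical level sets $\{U = c\}$ and $\{V = c\}$. A clean way to sidestep this is to sandwich the indicators by continuous functions from above and below and invoke the Portmanteau theorem; alternatively one verifies continuity of the limit by unwinding its description as a continuous functional of the limit point of $(a_{n\pm k})$ under the Gauss-type measure inherited from \cite{SU:ren}.
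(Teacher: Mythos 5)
Your proposal is correct and follows essentially the same route as the paper: conditioning on $\alpha$, using the fact that on each tower $D_N$ is exactly uniform and $H_N$ is discrete-uniform on a lattice of mesh $1/q_{n-1}$ (resp.\ $1/q_{n-2}$) that becomes dense as $N\to\infty$, and reducing $T_N$ to Corollary~\ref{limitingquantities} applied with the index shifted by two via Theorem~\ref{main}. You spell out more carefully than the text does the Fubini factorization, the index shift, the passage from discrete to continuous uniform (via $q_{n-1}\to\infty$ in measure), and the Portmanteau handling of the indicator discontinuities, but these are fillings of gaps rather than a different argument.
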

\begin{proof}
Since  $x$ is uniformly distributed, on each event $\{ (x,\alpha) \in {Z^{(n-2)}_w} \}$, $w=l,s$, the random variables  ${d_{n(N,\alpha)}(x)}$ and  ${h_{n(N,\alpha)}(x)}$ are renormalized so that $ D_N (x,\alpha )$ and  $ H_N (x,\alpha )$ are uniformly distributed on $[0,1)$ for each $N$.  Since $Leb \{ x \in Z^{(n-2)}_l\} = \lambda^{(n-2)}q_{n-1}$ and $Leb \{x \in Z^{(n-2)}_s\} = \lambda^{(n-1)}q_{n-2}$,  the existence of the limiting distribution of  $T_N$ follows from Corollary \ref{limitingquantities}.
\begin{comment}
%On the event $\{ (x,\alpha) \in {Z^{(n)}_l} \}$, since $x$ is uniformly distributed, 
%$(x,\alpha)$ are uniformly distributed,  
the random variables  ${d_{n(N,\alpha)}(x)}/{\lambda^{(n)}}$ and  ${h_{n(N,\alpha)}(x)}/{q_{n+1}} $ are renormalized so that they are also uniformly distributed and moreover they are independent. Similarly, on  $\{ (x,\alpha) \in {Z^{(n)}_s}\}$,  ${{d_{n(N,\alpha)}(x)}}/{\lambda^{(n+1)}}$ and  ${{h_{n(N,\alpha)}(x)}}/{q_{n}} $ are uniformly distributed and independent. Since $Leb \{ x \in Z^{(n)}_l\} = \lambda^{(n)}q_{n+1}$ and $Leb \{x \in Z^{(n)}_s\} = \lambda^{(n+1)}q_{n}$,  the existence of the limiting distribution of  $\chi_{Z^{(n)}_l}$ and $\chi_{Z^{(n)}_s}$ follows from Corollary \ref{limitingquantities}.
\end{comment}
\end{proof}
\begin{comment}
The quantities  \frac{{d(x)}}{\lambda^{(n)}}, \frac{h(x)}{q_{n+1}},  \frac{\lambda^{(n+1)}}{\lambda^{(n)} }, \frac{1}{q_{n+1} \lambda^{(n)} },  a_n, a_{n\pm 1}, \dots, a_{n\pm  K} \right) %, 0, \leq k \leq  K
& \qquad  \mathrm{if}\,\, {x}\in  Z^{(n)}_l; \\ G^{\epsilon, \delta}\left( \frac{{y_0}}{\lambda^{(n+1)}},  \frac{h}{q_{n}}, \frac{\lambda^{(n+1)}}{\lambda^{(n)} }, \frac{1}{q_{n} \lambda^{(n+ 1)} }, a_n, a_{n\pm1}, \dots, a_{n\pm K} \right) %, 0, \leq k \leq  K
& \qquad  \mathrm{if}\,\, {x}\in  Z^{(n)}_s;
\end{array} \right.
\ee
where, if $x\in \Delta^{(n)}_j \subset Z^{(n)}_l$, then $d(x) \in \Delta^{(n)}_0 $ is such that $R_{\alpha}^j d(x) = x$ and $h(x)=q_{n+1}-j$, while if $x\in \Delta^{(n+1)}_j \subset Z^{(n)}_s$, then $d(x) \in \Delta^{(n+1)}_0 $ is such that $R_{\alpha}^j (1-d(x)) = x$ and $h(x)=q_{n}-j$.
% (distance of $\
\end{comment}

\subsection{Tightness and final arguments.}
\begin{lemma}\label{tightness}
The random variables $G^{\epsilon,\delta}$ are uniformly tight in $\epsilon, \delta$ and $N$, i.e.
\bes
\inf_{\begin{subarray}{c}  \epsilon, \delta<1\\ N\in \mathbb{N}\end{subarray}} Leb \{ (x,\alpha)\st  G_N^{\epsilon,\delta} (\alpha ,x,  N) \leq T \} \xrightarrow{T \rightarrow +\infty} 1.
\ees
\end{lemma}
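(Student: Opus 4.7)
The plan is to prove a universal pointwise bound on $|G^{\epsilon,\delta}|$, on a set of large measure, by quantities that are uniformly tight in $N$ (via Theorem~\ref{main} and Lemma~\ref{powerslemma}), with all constants independent of $\epsilon,\delta$ and of the number of orders $M=M(\epsilon,\delta)$ appearing in (\ref{defG}). Combining Proposition~\ref{cycleprop} ($|g_m^\varepsilon-\BS{\alpha}{x^{(m)}_i}{r_m}/r_m|\le\varepsilon\le 1$) with Corollary~\ref{cycleestimatelemma} yields, for each cycle appearing in (\ref{defG}), the estimate $|g_m^\varepsilon(\alpha,x^{(m)}_i,r_m)|\le M_*+1+\max(1/(r_m x_0^{(m,i)}),1/(r_m y_0^{(m,i)}))$ where $M_*$ is the universal constant of Corollary~\ref{cycleestimatelemma}. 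Since the cycles are disjoint subsets of $\Orb{x}{N}$, so that $\sum_{m,i}r_m/N\le 1$, summing gives
\bes
|G^{\epsilon,\delta}|\le (M_*+1)+\frac{1}{N}\sum_{m,i}\max\Bigl(\frac{1}{x_0^{(m,i)}},\,\frac{1}{y_0^{(m,i)}}\Bigr).
\ees

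I would then split this sum into \emph{good} cycles ($i\ne 0$) and \emph{bad} cycles ($i=0$). For the good ones, (\ref{inclusionsvisits}) together with Remark~\ref{firstlastpoints} gives $x_0^{(m,i)},y_0^{(m,i)}\ge c\lambda^{(m+3)}$ for a universal $c>0$, so $\max(1/x_0,1/y_0)\le Cq_{m+4}$; combined with the cycle-count bound of Remark~\ref{cardinalitycycles} and the exponential growth $q_{k+2}\ge 2q_k$, the total good contribution collapses to a polynomial in finitely many partial quotients near $n$ and in $q_n/N$, uniformly in $M$. For bad cycles at an order $m$ strictly below the topmost present order, the starting point $\overline x^{(m)}_0=x^{(m')}_{\overline c_{m'}}$ (where $m'$ is the next higher present order) lies in $\Delta(m')\setminus\Delta(m'+2)$ by (\ref{inclusionsvisits}), giving $1/d_0^{(m)}\le Cq_{m'+4}$ for $d_0^{(m)}:=\min(\overline x^{(m)}_0,1-\overline x^{(m)}_0)$; the same exponential decay makes their total contribution also polynomial in partial quotients, independently of $M$.

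The remaining delicate term is the single bad cycle at the topmost present order $m_1$, whose starting point $\overline x_0=R_\alpha^{h_{m_1}(x)}x$ is the first entrance of $x$ to $\Delta(m_1)$ and can be arbitrarily close to $\{0,1\}$; for this one I would use a probabilistic argument. For fixed $\alpha$ the first-entrance map collapses the $\xi^{(m_1)}$ tower, producing a pushforward with density at most $q_{m_1+1}+q_{m_1}$ on $\Delta(m_1)$ (the densities $q_{m_1+1}$ and $q_{m_1}$ on the two base floors integrate to~$1$). Hence, writing $d_0=\min(\overline x_0,1-\overline x_0)$,
\bes
Leb\{(x,\alpha)\st 1/(Nd_0)>T'\}\le (q_{m_1+1}+q_{m_1})/(NT')\le 2(a_{n-1}+1)/T',
\ees
using $m_1\le n-2$, $N\ge q_{n-2}$ and $q_{n-1}/q_{n-2}\le a_{n-1}+1$. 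On the set where $a_{n-1}\le K$ this is $\le 2(K+1)/T'$, hence $\le \eta/2$ for $T'$ large, with the excluded $\alpha$-set of measure $\le \eta/2$ by Theorem~\ref{main}; the backward boundary $\underline x_0$ is analogous. Intersecting (a) the set from Lemma~\ref{powerslemma} where $a_{n-j}\le C(j+1)^2$, (b) the large-measure set from Theorem~\ref{main} on which $q_n/N$ and $a_{n+k}$ for $|k|$ in a bounded window are bounded by some $K$, and (c) the complement of the boundary event, all of total measure $\ge 1-\eta$, one obtains $|G^{\epsilon,\delta}|\le T(\eta)$ uniformly in $\epsilon,\delta<1$ and $N$, which is the claimed tightness. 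The main obstacle is the uniformity in $M$ of the cycle sums as $M=M(\epsilon,\delta)$ grows, which is secured by the exponential growth $q_{k+2}\ge 2q_k$.
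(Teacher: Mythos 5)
Your proposal is correct and follows essentially the same route as the paper's proof: bound $|G^{\epsilon,\delta}|$ pointwise via Corollary \ref{cycleestimatelemma} and the cycle counts of Remark \ref{cardinalitycycles}, use the exponential growth $q_{k+2}\ge 2q_k$ to make the sum over orders converge uniformly in $M$, invoke Lemma \ref{powerslemma} and Theorem \ref{main} to control the partial quotients and $q_n/N$ on a large-measure set, and handle the single close-to-boundary starting point by a measure estimate. The only differences are cosmetic: the paper dominates all the $i=0$ contributions at once by $1/(q_{n-1}m_{n-2}(x))$ where $m_{n-2}(x)$ is the closest approach of the \emph{whole} orbit to $\{0,1\}$ and then uses the set $X_\nu$, whereas you isolate the one genuinely dangerous cycle and bound the measure via the pushforward density of the first-entrance map; note a small slip in that the dangerous starting point is $R_\alpha^{h_{n-2}(x)}x$ (first entrance to $\Delta(n\!-\!2)$, pushed forward with density $\le q_{n-1}+q_{n-2}$), not $R_\alpha^{h_{m_1}(x)}x$, though this does not affect your final bound.
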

\begin{proof}
Let us remark that, from the definition of  $G^{\epsilon,\delta}$ in (\ref{defG}), as $\epsilon$ and $\delta$ tends to zero (and hence $M \rightarrow \infty$), $G^{\epsilon,\delta}$ has the same structure, but more and more terms are present in the series while at the same time each function $g_m^{\varepsilon}$ in the series involves more and more variables (i.e.~$K\rightarrow \infty$ in (\ref{gdependence})).% as $\varepsilon\rightarrow 0$.

%As one can see from the definition (\ref{defG}) of  $G^{\epsilon,\delta}$, 
%As $\epsilon$ and $\delta$ tends to zero, $K \rightarrow \infty$, i.~e.~ $G^{\epsilon,\delta}$ has the same structure, but more and more terms are present in the series (and at the same time each function $g_m^{\varepsilon}$ in the series involves more and more terms as $\varepsilon\rightarrow 0$). 
Since $\inf_{\epsilon,\delta, N}Leb\{ G_N^{\epsilon,\delta}
\leq  T \} \geq Leb\{ \sup_{\epsilon,\delta, N}G_N^{\epsilon,\delta}\leq T \} $, it is enough to estimate the latter. 
Let us estimate $G^{\epsilon,\delta}$  arguing as in the proof of Proposition \ref{finitecycleslemma} to prove (\ref{finalestimatereminder}), hence using (\ref{coefficients}),  Remark \ref{cardinalitycycles} and moreover (\ref{gapproximates}) in order to apply Corollary \ref{cycleestimatelemma}. For each $\delta>0$, we can find by Lemma \ref{powerslemma} a $C(\delta)>1$ and set  $A_N(\delta) \subset[0,1)$ on which (\ref{apowers}) holds for $N$ sufficiently large and, by  Theorem \ref{main},  a $C'=C'(\delta)>0$ and $A_N'(\delta)\subset[0,1)$ such that $q_{n}/N \leq C'$ and $a_n$, $a_{n-1}\leq C'$ on $[0,1)\backslash A_N'$.

Hence, for each $\alpha \notin A_N\cup A_N'$,
%%%SOSTITUIRE con ena array per avere la ref all'ultimo
%\be
%\label{Gestimate}
%\begin{split}
\begin{eqnarray}
\nonumber
&&\sup_{\begin{subarray}{c}\epsilon<1\\ \delta <1\end{subarray}} \left| G^{\epsilon, \delta}(x,\alpha, N) \right| \leq  %\frac{q_n}{N} 
 \sum_{\begin{subarray}{c}{{m=0}}\\{m \, \mathrm{even}}\end{subarray}}^{n-2} \sum_{i=-\underline{c}_m}^{\overline{c}_m-1} \frac{r_m({x}^{(m)}_i)}{N}%{q_n}
 \sup_{\varepsilon <1}\left| g^{\varepsilon}_m\left( \alpha, {x}^{(m)}_i, r_{m}({x}^{(m)}_i)\right)\right| %\leq 
\\\nonumber &&\leq   \sum_{\begin{subarray}{c}{{m=0}}\\{m \, \mathrm{even}}\end{subarray}}^{n-2} \sum_{i=-\underline{c}_m}^{\overline{c}_m-1} \frac{r_m({x}^{(m)}_i)}{N} \sup_{\varepsilon <1}\left(
\varepsilon+ %%%Extra\left(1+\frac{a_{n+1}}{a_{n+2}}\right) 
\left| \frac{\BS{\alpha}{{{x}^{(m)}_i}}{r_m({x}^{(m)}_i)}}{r_m({x}^{(m)}_i) } \right| \right) %\leq
\\ && \leq \label{Gestimate}% C'(C'+1)^3 M + %C'
 \sum_{\begin{subarray}{c}{{m=0}}\\{m \, \mathrm{even}}\end{subarray}}^{n-4} %(1+\frac{a_{m+1}}{a_{m+2}})
%\frac{C'(1+C(n\!-\!m\!-\!1)^2 )
\frac{| P(C(n-m)^2)|}
{2^{n-m-4}}  +C'(C'+1)^2\left(1+M+\frac{1}{ q_{n-1} {m_{n-2}(x)}}\right),
%\end{split}
%\ee
\end{eqnarray}
where $P(x)$ is a fixed polynomial %, $c(C')$ a constant which depends on $C'$
 and $m_{n-2}(x)= \min \{ T^i x, 1-T^ix; 0\leq i<N \} $. % and $C(\delta')$ is such that
%\be\label{estimatecycle}
%\left| \frac{1}{q} \BS{\alpha}{{x}}{q}\right| \leq 2 \max \left\{\frac{1}{q
%x_0}, \frac{1}{q y_0} \right\} + (a_{n+1}+1)M.
%\ee
Let  $X_\nu = \{ x \st   {m_{n-2}(x)}q_{n-1} \leq \nu \}$. Hence, if $\alpha \notin A_N \cup A_N' $ and $x\notin X_{\nu }$, 
%$(x, \alpha )\notin X_{\nu }\times A_N \cup A_N' $,
 since the series in (\ref{Gestimate}) is converging, $G_{\epsilon, \delta }(\alpha, x, N)$ is uniformly bounded by a constant  $T= T (C, C' , 1/\nu)$ which depends on $C, C' , 1/\nu$.
Since $X_\nu$ is such that $Leb(X_{\nu})\rightarrow 0 $ as $\nu\rightarrow 0$ uniformly in $ n$ (for $\alpha \notin A_{N'}$) and since moreover, from Lemma \ref{powerslemma} and from Theorem \ref{main},    $Leb(A_N(\delta))\rightarrow 0$ and $Leb(A_N'(\delta))\rightarrow 0$ as $\delta \rightarrow 0$, which correspond to choosing  $C'(\delta)$ and  $C(\delta)$ sufficiently large, 
%as $C(\delta) \rightarrow +\infty$, $Leb(A_N(\delta))\rightarrow 0$ and as $C'(\delta) \rightarrow +\infty$, $Leb(A_N'(\delta))\rightarrow 0$, 
this is enough to conclude the proof.
% Hence, if $(x, \alpha )\notin X_{\nu }\times A \cup A' $, since the series in (\ref{Gestimate}) is converging, $G_{\epsilon, \delta }(\alpha, x, N)$ is uniformly bounded by a constant which depends on $T= T (C, C' , 1/\nu)$.
%This concludes the proof.
%and as $T \rightarrow infty $,
\end{proof}
Recall that fixed $\epsilon>0$, $ \delta>0$,  $G^{\epsilon,\delta}$ can be expressed as a function of finitely many random variables, listed in (\ref{Gdependence}). Some of them can be expressed through the random variables in (\ref{DN}, \ref{HN}, \ref{TN}) and all of them have a limiting distribution, either by  Corollary \ref{limitingquantities} or by Lemma \ref{limitingratios}.
\begin{lemma}\label{discontinuities}
For any $\epsilon>0$ and $\delta>0$, let $D$ be the set such that $G^{\epsilon,\delta}$ is discontinuous as a function of the random variables in (\ref{Gdependence}). Then the set of $(x,\alpha)$ such that the limits of the random variables in (\ref{Gdependence}) belong to $D$ has measure zero. 
\end{lemma}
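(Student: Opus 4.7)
The plan is to locate the discontinuities of $G^{\epsilon,\delta}$ explicitly and then verify that the joint limiting distribution of the random variables in (\ref{Gdependence}) assigns measure zero to the resulting set. By the definition (\ref{defG}), $G^{\epsilon,\delta}$ is a finite sum: the outer index $m$ runs over at most $M/2$ even integers in $[n-M,n-2]$, and by Remark~\ref{cardinalitycycles} for fixed $m$ the inner index $i$ takes at most $2\prod_{s=1}^{3}(a_{m+s}+1)$ values. Each summand, via the explicit formula (\ref{gdef}), is a rational function of the scaled quantities $x_i/\lambda^{(m)}$, $y_i/\lambda^{(m)}$, of $\lambda^{(m+1)}/\lambda^{(m)}$ and of $(q_{m+1}\lambda^{(m)})^{-1}$, with denominators bounded away from zero by Remark~\ref{minimumdistance}. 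The inductive step in the proof of Proposition~\ref{generalBSprop} shows that, once the integer entries $a_{n+s}$ ($|s|\le K_1$), the cardinalities $\overline{c}_m,\underline{c}_m$, and the combinatorial labels identifying in which element of $\xi^{(m)}$ each closest point $x_i, y_i$ lies are all held fixed, every one of these scaled quantities becomes a real-analytic function of the continuous coordinates listed in (\ref{Gdependence}).

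Consequently, $D$ is contained in the union of the boundary hypersurfaces across which one of those discrete or combinatorial data changes value. Two kinds of boundary occur: first, those across which $\overline{c}_m$ or $\underline{c}_m$ jumps, defined by an equation of the form $\sum r_m(x_i^{(m)})/N + h(x)/N = 1$ (and its analogue for the initial segment); second, those across which the address of some closest point $x_i$ or $y_i$ changes, defined by the coincidence of a scaled $x_i/\lambda^{(m)}$ (or $y_i/\lambda^{(m)}$) with a scaled endpoint of an element of $\xi^{(m)}$. For each fixed choice of the integer parameters, these constitute finitely many real-analytic equations in the continuous coordinates, so $D$ is a countable union (over the integer parameters) of codimension-one real-analytic hypersurfaces in the continuous parameter space.

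To conclude, I will invoke the absolute continuity of the joint limit law on the continuous coordinates, conditional on each choice of integer coordinates. By Lemma~\ref{limitingratios} the variables $D_N$ and $H_N$ are uniform on $[0,1]$ for every $N$ and are independent of $\alpha$, since $x$ is drawn uniformly and independently of $\alpha$; hence their limit is uniform on $[0,1]^2$ and is independent of the $\alpha$-measurable coordinates. The remaining continuous coordinates depend only on $\alpha$, and by Theorem~\ref{main} and Corollary~\ref{limitingquantities} their joint limit, conditional on any prescribed values of the $a_{n+s}$, is absolutely continuous with respect to Lebesgue measure on the relevant Euclidean region; this is the essence of the Gauss-type absolute continuity exploited in the proof of Theorem~1 of \cite{SU:ren}. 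A real-analytic hypersurface has zero Lebesgue measure, so a countable union of them does too, and $D$ is null. The main technical point I expect to be delicate is precisely this joint (as opposed to merely marginal) absolute continuity; one must read off from the suspension-flow representation in \cite{SU:ren} that the limit is a genuinely absolutely continuous invariant measure on the full product of continuous coordinates. Once that is granted, the geometric step that codimension-one sets are Lebesgue null is routine.
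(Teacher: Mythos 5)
Your proposal correctly identifies the key mechanism — that the discontinuities live on a ``thin'' set and that $D_N$, $H_N$ are uniform and independent of $\alpha$ — but the bridge you build to conclude is faulty in a way you yourself flag as delicate, and the delicacy is real: the joint absolute continuity you invoke does not hold. The $\alpha$-dependent continuous coordinates in (\ref{Gdependence}) satisfy deterministic relations among themselves. For example, as the proof of Corollary \ref{limitingquantities} shows,
\[
\frac{1}{q_{n-1}\lambda^{(n-2)}} \;=\; 1 + \frac{q_{n-2}}{q_{n-1}}\,[a_{n},a_{n+1},\dots]
\quad\text{and}\quad
\frac{\lambda^{(n-1)}}{\lambda^{(n-2)}} = [a_{n},a_{n+1},\dots],
\]
so the triple $\bigl(q_{n-2}/q_{n-1},\,\lambda^{(n-1)}/\lambda^{(n-2)},\,(q_{n-1}\lambda^{(n-2)})^{-1}\bigr)$ lies identically on a codimension-one algebraic surface, and its joint limiting law is therefore singular with respect to three-dimensional Lebesgue measure. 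So ``a codimension-one real-analytic hypersurface in the continuous parameter space is null for the joint limit law'' is not a legitimate step: the limit law can, and here does, live on such a surface.

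The paper's proof avoids this entirely by a conditioning (Fubini-type) argument that uses only the marginal uniformity you already noted. Among the variables in (\ref{Gdependence}), only $D_N$ and $H_N$ depend on $x$; all the rest are functions of $\alpha$ alone. Fix the $\alpha$-dependent coordinates. Then $G^{\epsilon,\delta}$, as a function of $(D_N,H_N)\in[0,1]^2$, is discontinuous at only finitely many points (those where $d(x),h(x)$ correspond to discontinuities of the induced maps $T^{(m)}$, $n-M\le m\le n-2$), hence on a Lebesgue-null set. Since $(D_N,H_N)$ are uniformly distributed on $[0,1]^2$ for every $N$ and have a uniform limit independent of the $\alpha$-coordinates, the set of $(x,\alpha)$ mapped to $D$ has measure zero. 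This needs no statement about absolute continuity of the $\alpha$-dependent coordinates. If you want to keep your hypersurface picture, the repair is to observe that the bad hypersurfaces are all transverse to the $(D_N,H_N)$-slices, and argue slice by slice exactly as the paper does, rather than appealing to global joint absolute continuity.
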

\begin{proof}
Among the random variables in  (\ref{Gdependence}), expressed through (\ref{DN}, \ref{HN}, \ref{TN}), only two depend on $x$, i.e.~$D_N$ and $H_N$. 
%which are $d(x)/\lambda^{(n-2)}$ and $h(x)/{q_{n-1}}$ (or $d(x)/\lambda^{(n-1)}$ and $h(x)/{q_{n-2}}$ respectively).
 For each given value of all the other ones, one can see that there are only finitely many values of these two, near which $G^{\epsilon,\delta}$ changes discontinuously: more precisely, discontinuities might happen only when $d(x)$ and $h(x)$ correspond to discontinuities $x$ of the induced maps $T^m$ for $n-M\leq m \leq n-2$. Hence, the set $D$ has measure zero in the domain of $G^{\epsilon, \delta}$. Since $D_N$ and $H_N$ %$d(x)/\lambda^{(n-2)}$ and $h(x)/{q_{n-1}}$ (or $d(x)/\lambda^{(n-1)}$ and $h(x)/{q_{n-2}}$ )
 are uniform random variables and have a uniformly distributed limit, also the set of $(x,\alpha)$ which are mapped to $D$ by their limit has measure zero.
\end{proof}
\begin{comment}
\begin{lemma}
For any $\epsilon, \delta$ and $N$ and for any $\varepsilon>0$, there exists a set $C\subset [0,1)\times [0,1)$ such that for all $(x,\alpha) \in C $, $G^{\epsilon,\delta}$ is continuous and $Leb(C)<\varepsilon$.
\end{lemma}
\end{comment}

%\subsubsection{Existence of the Limiting distribution. }
\begin{proofof}{Theorem}{limitingthm}
Let $f=f_1+f_2$ satisfy the assumptions of the Theorem. Since $f_2$ is integrable and for all $\alpha\in [0,1)\backslash \mathbb{Q}$ the rotation $R_{\alpha}$ is ergodic, by Birkhoff ergodic theorem, for a.e.~$(x,\alpha)$ the Birkhoff sums $S_N(\alpha, x,f_2)/N$ converge to the constant $\int f_2$. In particular, a.e.~convergence implies convergence in distribution. Let us hence consider separately the Birkhoff sums  $S_N(\alpha, x,f_1)$ of $f_1$.

\begin{comment}
By Proposition \ref{generalBSprop}, the sequence $G^{\epsilon,\delta}_N$ converge to $S_N(\alpha, x,f_1)/N$ in probability, and hence in distribution, as $\epsilon, \delta$ tend to zero. Moreover, for each $\epsilon,\delta$, $G^{\epsilon,\delta}_N$, always by Proposition \ref{generalBSprop}, can be expressed as a function of variables which have a limiting distribution as $N$ tends to infinity, by Corollary \ref{limitingquantities} and Lemma \ref{limitingratios}. This is enough to deduce the existence of the limiting distribution of   $S_N(\alpha, x,f_1)/N$.
\end{comment}
To show that $S_N(x,\alpha, f_1)/N$ has a limiting distribution, it is enough to show that for each continuous and bounded function $g$, if $E$ denotes the expectation with respect to the Lebesgue measure on $(x,\alpha)$,
$\lim_{N\rightarrow \infty} Eg(S_N(x,\alpha, f_1)/N) =  Eg(S)$ 
%\be \label{final}
%\lim_{N\rightarrow \infty} Eg(S_N(x,\alpha, f_1)) =  Eg(S))
%\ee
for some random variable $S$.

Let us first show that, for each  $\epsilon>0,\delta>0$, $G^{\epsilon,\delta}(\cdot, \cdot, N)$ has a limiting distribution as $N$ tends to infinity. By Proposition \ref{generalBSprop},  $G^{\epsilon,\delta}$ can be expressed as a function of variables which,  by Corollary \ref{limitingquantities} and by Lemma \ref{limitingratios} have all a limiting distribution as $N$ tends to infinity. The condition on the discontinuity sets proved in Lemma \ref{discontinuities} is exactly what guarantees, by a standard result (see e.g. Theorem 2.1, Chapter III \S8 in \cite{Shy:pro}),  that also  $G^{\epsilon,\delta}$ has a limiting distribution. 

Hence, for some random variable $S^{\epsilon, \delta}$, $\lim_{N\rightarrow \infty} E g(G^{\epsilon, \delta}(\cdot, \cdot, N)) =  E g(S^{\epsilon, \delta})$ for each choice of $\epsilon$ and $\delta$, where  $g$ is as before any bounded and continuous function.
Using the tightness in Lemma \ref{tightness} and Prokhorov's theorem, one can show that there exists a  subsequence $S^{\epsilon_k,\delta_k}$ which converge in distribution to some $S$.

Let us prove the convergence in distribution of $S_N/N$. % (\ref{final})
Given $g$ bounded and continuous and $\varepsilon >0 $, by the previous paragraph we can choose $k_0$ sufficiently large so that 
$| Eg(S^{\epsilon_{k_0},\delta_{k_0}}) -  Eg(S) | \leq \varepsilon/3$. We can estimate
\begin{eqnarray}
&&\hspace{-5mm} \left| E g\left(\frac{S_N(\cdot,\cdot, f_1)}{N}\right)  - Eg(S)\right| \leq  E \left|g\left(\frac{S_N(\cdot,\cdot, f_1)}{N}\right) -g(G^{\epsilon_{k_0},\delta_{k_0}}(\cdot,\cdot, N))\right| +\label{1final}\\ && \qquad \label{23final}  +  \left|E  g\left(G^{\epsilon_{k_0},\delta_{k_0}}(\cdot, \cdot, N)\right) -Eg\left(S^{\epsilon_{k_0},\delta_{k_0}}\right)\right| +
| Eg(S^{\epsilon_{k_0},\delta_{k_0}}) -  Eg(S) |.
\end{eqnarray}

By choice of $k_0$, the last term in (\ref{23final}) is less than $\varepsilon/3$. By the previous arguments, there exists some $N_1>0$, such that for all  $N\geq N_1$  also   the second term in (\ref{23final})  is less than $\varepsilon/3$. Substituting $k_0$ with a bigger one if necessary, we can assume that $\delta_{k_0} \leq \varepsilon / 12 \max g $. Moreover, if $k_0$ is large enough, using  %(Lemma \ref{tightness})
 absolute continuity of $g$ on a compact set given by tightness, by Proposition \ref{generalBSprop} there exists $N_2$ such that if $N\geq N_2$, the RHS term in (\ref{1final}) is controlled by $\delta_{k_0} 2\max g $ $+ \varepsilon/6$. Hence for each $N\geq \max(N_1, N_2)$, the LHS of  (\ref{1final}) is less than $\varepsilon$. This concludes the proof.
%$| Eg(G_N^{\epsilon_{k_0},\delta_{k_0}}) -  Eg(S_N) | \leq \varepsilon/3$. 
\end{proofof}

%\vspace{2mm}
\appendix
\section{}
\subsection{Proof of Lemma \ref{powerslemma}.}\label{proofpowerslemma}
We present here the proof of Lemma \ref{powerslemma}, which is based on the techniques and results  used in \cite{SU:ren}. We just briefly recall the notation referring to the paper  \cite{SU:ren}  for further details.

As in \cite{SU:ren}, let  $\alpha\mapsto \G(\alpha)= \{ \frac{1}{\alpha} \}$ be the Gauss map  and $\mu_1$ its invariant measure given by the density $
 \frac{\ud \mu_1 }{\ud \alpha}=
(\ln 2 (1+\alpha))^{-1}$. Let $\h{\G}$ be the  natural extension of  $\G$, which acts as a shift on bi-infinite sequences $\h{\alpha} = \{a_n \}_{n\in \mathbb{Z}}= (\h{\alpha}^-, \h{\alpha}^+)$ and  let $\mu_2$ be the natural invariant measure for $\hat{\G}$, which satisfies $\mu_2 = \pi_* \mu_1$ where $\pi: \h{\alpha} = \{a_n \}_{n\in \mathbb{Z}} \mapsto \alpha = \h{\alpha}^+ = \{a_n \}_{n\in \mathbb{N}} $ is the natural projection.
Let $\{\Phi_t\}_{t\in \mathbb{R}}$ denote the special flow built over $\h{\G}$ under  the roof function $\varphi(\h{\alpha})= - \ln (\h{\G}(\h{\alpha})^-)$ and let $\mu_3$ denote the measure given by $\ud \mu_3 = \ud \mu_2\ud z$ on the domain $D$ of $\{\Phi_t\}_{t\in \mathbb{R}}$. As shown in \S4 in \cite{SU:ren}, $\{ \Phi_t\}_{t\in \mathbb{R}}$ is mixing.

\begin{proofof}{Lemma}{powerslemma}
Let $\h{A}_k= \{ \h{\alpha} \st a_{-k } \leq (k+1)^2\}$. It is easy to check that $\sum_{k=0}^{\infty} \mu_2(\h{A}_k^C) =\sum_{k=0}^{\infty}   \mu_1 \{ {\alpha} \st a_{1} \geq (k+1)^2 \} ) < \infty$.  Hence, by Borel-Cantelli Lemma, for a.e.~$\h{\alpha}$ there exists  $c=c(\h{\alpha}) >0$ such that $a_{-k }\leq c (k+1)^2$ for all $k\in \mathbb{N}$. Moreover, given $\delta'$, we can find $\h{A}= \h{A} (\delta')$ and $C=C(\delta')$ such that uniformly, for each $\h{\alpha} \notin  \h{A} $ and $k\in \mathbb{N}$,  $a_{-k }\leq C (k+1)^2$.

Since the condition (\ref{apowers}) depends only on $a_n$ with $n> 0$, i.e.~it is invariant on fibers $\pi^{-1}(\alpha)$, for any $c>0$, setting $n(N,\h{\alpha}) =n(N,\pi \h{\alpha}) $, we have
\begin{eqnarray}
 \mu_1 (\{ \alpha &:& a_{n(N,\alpha)-k}\leq  c (k+1)^2 , \qquad  0\leq k < n(N,\alpha) \}  = \nonumber \\
&=& \mu_2 (\{ \h{\alpha} \st a_{n(N,\h{\alpha})-k} \leq c (k+1)^2 , \qquad   0\leq k < n(N,\alpha) \} \geq \nonumber \\
&\geq & \mu_2 (\{ \h{\alpha} \st a_{n(N,\h{\alpha})-k} \leq c (k+1)^2 , \qquad   k < n(N,\alpha),\,\, k \in \mathbb{Z} \},  %= \h{\G}^{-n(N,\h{\alpha})} \h{A},
 \label{manyalphah}
\end{eqnarray}
where the last inequality follows from the inclusions between the two sets. In order to conclude the proof, we want to show that for some $c>0$, (\ref{manyalphah}) is bigger than $1\!-\!\delta$ for all $N$ sufficiently large. To prove it, we will use mixing of the special flow $\{\Phi_t\}_{t\in \mathbb{R}}$. %studied in \cite{SU:ren}.

%Let us denote $A_{\Phi} = \pi^{-1} \h{A}$, so that
The set in  (\ref{manyalphah}) contains  $\{ \h{\alpha} \st \h{\G}^{n(N,\h{\alpha})}(\h{\alpha}) \notin \h{A}(\delta')\}$  if we take $c=C(\delta')$. 
%We can choose $\h{A}$ so that   $\mu_3(A_{\Phi})\leq \delta/3$.
 Let us localize the set of $\h{\alpha}$ considered so that we describe the set through the flow $\{\Phi_t\}_{t\in\mathbb{R}}$.  Reasoning as in the proof of Theorem 1.1 in \cite{SU:ren}, for each $\varepsilon>0$ we can construct a finite union of cylinders $\mathcal{C}$ and subsets $U_{\mathcal{C}}\subset \mathcal{C}$ such that
% $\mu_2(U_{\mathcal{C}})\geq \epsilon \mu_2 (\mathcal{C})$ and
 $\sum_{\mathcal{C}}{\mu_2( U_{\mathcal{C}} )} \geq 1-\varepsilon$ and for all $\h{\alpha } \in U_{\mathcal{C}}$, if we set\footnote{We recall that  $f_{\mathcal{C}} = \sup_{\h{\alpha}\in \mathcal{C}}f(\h{\alpha})$ where $f=\lim_n f_n$ and $f_n(\h{\alpha}) = \ln q_{2n} (\h{\alpha}) - \BS{\varphi}{n}{\h{\alpha}}$
% $f_{\mathcal{C}} = \sum_{\h{\alpha}\in \mathcal{C}}f(\h{\alpha})$.}
and we refer to \cite{SU:ren} for more comments of  $f$ and the proof of the existence of the limit $f$.}
% and recall that %$f_n(\h{\alpha}) = \ln q_{2n} (\h{\alpha}) - \BS{\varphi}{n}{\h{\alpha}}$
% $f_{\mathcal{C}} = \sum_{\h{\alpha}\in \mathcal{C}}f(\h{\alpha})$.}
$t_{\mathcal{C}} (N)= \ln N -  f_{\mathcal{C}}$ and denote by $p$ the projection $p(\h{\alpha},z)=\h{\alpha}$, %(where $f_{\mathcal{C}}$} was defined in \cite{SU}),
we have \bes
\h{\G}^{n(N,\h{\alpha})}(\h{\alpha}) = p \Phi_{t_{\mathcal{C}}(N)} (\h{\alpha}, 0).
\ees
%where $p(\h{\alpha},z)=\h{\alpha}$. 
Let us denote by $A_{\Phi} = p^{-1} \h{A}$, so that $ \h{\G}^{n(N,\h{\alpha})}(\h{\alpha}) \in \h{A} $ iff $\Phi_{t_{\mathcal{C}}(N)}( \h{\alpha}, 0)  \in A_{\Phi} $.  For each $\varepsilon>0$, by absolute continuity of the integral, there exists $\delta'>0$ and $\h{A}=\h{A}(\delta') $ so that   $\mu_3(A_{\Phi})\leq \varepsilon$.
% The set in  (\ref{manyalphah}) can be rewritten as $\{ \h{\alpha} \st \h{\G}^{-n(N,\h{\alpha})}(\h{\alpha}) \notin \h{A}\}$.
Hence, thickening $U_{\mathcal{C}}$  slightly, i.~e.~considering $U_{\mathcal{C}}^{\delta_{\mathcal{C}}} =U_{\mathcal{C}} \times[0, \delta_{\mathcal{C}})$ for some small $\delta_{\mathcal{C}}$,
%the intersection of the set in (\ref{manyalphah}) with
  as in the proof of  \cite{SU:ren}, and exploiting mixing of the flow $\{\Phi_{t}\}_{t\in \mathbb{R}}$, there exists $N_{\mathcal{C}} $ so that for each $N\geq N_{\mathcal{C}} $
\bes
\begin{split}
\mu_{2}\{ \h{\alpha} \in {\mathcal{C}} \st \h{\G}^{n(N,\h{\alpha})}(\h{\alpha}) \notin \h{A}\} &\geq \frac{  \mu_3( U_{\mathcal{C}}^{\delta_{\mathcal{C}}} \cap\Phi_{-t_{\mathcal{C}}(N)}(  A_{\Phi}^C))}{\delta_{\mathcal{C}}} \\ 
%\{ \h{\alpha}\in \mathcal{C} \st a_{n(N,\h{\alpha})-k} \leq c (k+1)^2 , \qquad \forall  k < n(N,\alpha) \}
& \geq (1-\varepsilon)(1-\mu_3(A_{\Phi}))\mu_2(U_\mathcal{C}).%()-\delta/3
\end{split}
\ees
Summing over the finitely many $\mathcal{C}$ involved and choosing $\varepsilon$ so that $(1-\varepsilon)^3\geq (1-\delta)$ and ${N_0} = \max  _{\mathcal{C}} N_{\mathcal{C}} $, this concludes the proof.
%Given $\delta>0$, we can find $B$ such that  $\mu_2(\h{B})\leq \delta$ where $\h{B}=\{\h{\alpha } \st a_1\leq B\}$.
%By definition of $\varphi$, for all  $\h{\alpha} \in \h{B}$,
%%a subset $\h{B}$ of the domain of $\h{\G}$ and $B>0$ such that $\mu_2(\h{B})\leq \delta/3$ and if $\h{\alpha}\notin \h{B}$,
% $\varphi(\h{\alpha})\leq \ln (B+1)$. Let $A_{\Phi} = \pi^{-1} \h{A} \backslash (\h{B})$, where $\h{A}=\h{A}(\delta/3 B)$, so that $\mu_3(A_{\Phi})\leq B\mu_2(\h{A}) \leq \delta/3$. Then, if $c=C(\delta/3B)$,  (\ref{manyalphah}) can be rewritten as $\h{\G}^{-n(N,\h{\alpha})} \h{A}$.
\end{proofof}

\subsection{On the proof of Theorem \ref{main}.}\label{appendixthmdiff}
Let us sketch briefly  how to obtain Theorem \ref{main} from %as a slight modification of
Theorem $1$, \cite{SU:ren}.
The only differences between the two theorems % Theorem \ref{main} and Theorem $1$ in \cite{SU:ren}
 are the following. First the measure considered is the $2$-dimensional Lebesgue measure $Leb$, while Theorem  $1$ in \cite{SU:ren} is stated for the product $\G \times \lambda$ of the Gauss measure and $\lambda$, Lebesgue measure. Then the entries considered are $a_{n(N)+k}$ for $| k | \leq  M$  instead than only $a_{n(N)+k}$ for $0< k \leq  M$. Furthermore, $n$ is required to be even.

The first two differences require  easy modifications. It was already remarked in  \cite{SU:ren} that  Theorem $1$, \cite{SU:ren} holds for any absolutely continuous measure. To consider also $a_{n(N)+k}$ with $-M\leq k
\leq 0$ it is enough to substitute the cylinder $C_N$ in ($24$),\cite{SU:ren}  with
%\footnote{We remark that here $M$ plays the role of $N$ in \cite{SU:ren}.}  
$ C_M := \h{\G}^{M-1} \hC{ c_{M} c_{M-1} \dots, c_0, c_{-1}, \dots,  c_{-M} }$ (we remark that here $M$ plays the role of $N$ in \cite{SU:ren}).

In order to have a limiting distributions when only even $n$ are considered (a choice which simplifies our analysis), it is necessary to use a slightly different special flow than the one in $\S 2$ of \cite{SU:ren}. Instead than the base transformation  $\h{\G}$, consider the transformation  $\h{\G^2}$ and substitute the roof function $\varphi $ in $(6)$ of \cite{SU:ren} with
\bes
\varphi(\h{\alpha}) = \ln \left( a_1 + \frac{1}{a_0 + \frac{1}{a_{-1} + {\dots }}} \right) + \ln \left( a_2 + \frac{1}{a_1 + \frac{1}{a_{0} + {\dots }}} \right) .
\ees
In this way, Lemma $1$ of \cite{SU:ren} holds for $f_n(\h{\alpha}) = \ln q_{2n} (\h{\alpha}) - \BS{\varphi}{\h{\alpha}}{n}$. The proof that the suspension flow under this new $\varphi$ is mixing proceeds as in $\S 4$ in \cite{SU:ren}: one can explicitly write the equations of local stable and unstable manifolds and check that they are non-integrable.

%\subsection*{Acknowledgments.}
%The first author thanks NSF Grant DMS $0600996$ for the financial support.
% The second author thanks the  Clay Mathematics Institute, since part of this work was completed while she was supported by a Liftoff Fellowship.

\bibliography{bibliography}

\providecommand{\bysame}{\leavevmode\hbox to3em{\hrulefill}\thinspace}
\providecommand{\MR}{\relax\ifhmode\unskip\space\fi MR }
% \MRhref is called by the amsart/book/proc definition of \MR.
\providecommand{\MRhref}[2]{%
  \href{http://www.ams.org/mathscinet-getitem?mr=#1}{#2}
}
\providecommand{\href}[2]{#2}
\begin{thebibliography}{Khi35}

\bibitem[HL30]{HL:som}
G.~H. Hardy and J.~E. Littlewood, \emph{Some problems of diophantine
  approximation: a series of cosecants}, Bulletin of the Calcutta Mathematica
  Society \textbf{20} (1930), no.~3, 251--266.

\bibitem[Khi35]{Kh:con}
A.~Ya. Khinchin, \emph{Continued fractions}, The University of Chicago Press,
  1935.

\bibitem[Shi96]{Shy:pro}
A.~N. Shiryaev, \emph{Probability}, Graduate Texts in Mathematics 95,
  Springer-Verlag, 1996, Second Edition.

\bibitem[Sin94]{Si:top}
Ya.~G. Sinai, \emph{Topics in ergodic theory}, Princeton University Press,
  1994.

\bibitem[SU08]{SU:ren}
Yakov~G. Sinai and Corinna Ulcigrai, \emph{A renewal type limit theorem for the
  gauss map and continued fractions}, Ergodic Theory and Dynamical Systems
  \textbf{28} (2008), no.~2, 643--655.

\bibitem[Ulc07]{Ul:mix}
Corinna Ulcigrai, \emph{Mixing for suspension flows over interval exchange
  tranformations}, Ergodic Theory and Dynamical Systems \textbf{27} (2007),
  no.~3, 991--1035.

\end{thebibliography}

\bibliographystyle{amsalpha}

%{\it Email addresses:}\quad {sinai@math.princeton.edu, \quadcorinna.ulcigrai@bristol.ac.uk}

\end{document}